\providecommand{\U}[1]{\protect\rule{.1in}{.1in}}
\newtheorem{theorem}{Theorem}[section]
\newtheorem{corollary}[theorem]{Corollary}
\newtheorem{lemma}[theorem]{Lemma}
\newtheorem{proposition}[theorem]{Proposition}
\newtheorem{remark}[theorem]{Remark}
\begin{document}
\title[Semiclassical measures and integrable systems]{Long-time dynamics of completely integrable Schr\"odinger flows on the torus}
\author[N. Anantharaman]{Nalini Anantharaman}
\address[N. Anantharaman]{Universit\'{e} Paris 11, Math\'{e}matiques, B\^{a}t. 425, 91405
ORSAY
Cedex, FRANCE} \email{Nalini.Anantharaman@math.u-psud.fr}
\author[C. Fermanian]{Clotilde~Fermanian-Kammerer}
\address[C. Fermanian]{LAMA UMR CNRS 8050,
Universit\'e Paris EST\\
61, avenue du G\'en\'eral de Gaulle\\
94010 CR\'ETEIL Cedex\\ FRANCE}
\email{clotilde.fermanian@univ-paris12.fr}
\author[F. Maci\`a]{Fabricio Maci\`a}
\address[F. Maci\`a]{Universidad Polit\'{e}cnica de Madrid. DCAIN, ETSI Navales. Avda. Arco de la
Victoria s/n. 28040 MADRID, SPAIN} \email{Fabricio.Macia@upm.es}
\thanks{N. Anantharaman wishes to acknowledge the support of Agence
Nationale de la Recherche, under the grant ANR-09-JCJC-0099-01. F.
Maci{\`a} takes part into the visiting faculty program of ICMAT
and is partially supported by grants MTM2010-16467 (MEC), ERC
Starting Grant 277778}

\begin{abstract}
In this article, we are concerned with long-time behaviour of
solutions to a semi-classical Schr\"odinger-type equation on the
torus. We consider time scales which go to infinity when the
semi-classical parameter goes to zero and we associate with each
time-scale the set of semi-classical measures associated with all
possible choices of initial data. We emphasize the existence of a
threshold~: for  time-scales below this threshold,  the set of
semi-classical measures contains measures which are singular with
respect to Lebesgue measure in the ``position'' variable, while at
(and beyond) the threshold, all  the semi-classical measures are
absolutely continuous in the ``position'' variable.

\end{abstract}
\maketitle

\newcommand{\nwc}{\newcommand}
\nwc{\nwt}{\newtheorem}
\nwt{coro}{Corollary}
\nwt{ex}{Example}
\nwt{prop}{Proposition}
\nwt{defin}{Definition}

%font change

\nwc{\mf}{\mathbf} %Latex (as in \bf not tilted math letters)
\nwc{\blds}{\boldsymbol} %Latex 
\nwc{\ml}{\mathcal} %Latex

%greek letters

\nwc{\lam}{\lambda}
\nwc{\del}{\delta}
\nwc{\Del}{\Delta}
\nwc{\Lam}{\Lambda}
\nwc{\elll}{\ell}
%blackboard bold math

\nwc{\IA}{\mathbb{A}} %algebraic
\nwc{\IB}{\mathbb{B}} %ball
\nwc{\IC}{\mathbb{C}} %complex
\nwc{\ID}{\mathbb{D}} %Dedekind
\nwc{\IE}{\mathbb{E}} %Euklides
\nwc{\IF}{\mathbb{F}} %finite field
\nwc{\IG}{\mathbb{G}} %Gauss
\nwc{\IH}{\mathbb{H}} %Hilbert\N-subgroup
\nwc{\IN}{\mathbb{N}} %natural
\nwc{\IP}{\mathbb{P}} %prime
\nwc{\IQ}{\mathbb{Q}} %rational
\nwc{\IR}{\mathbb{R}} %real
\nwc{\IS}{\mathbb{S}} %sphere
\nwc{\IT}{\mathbb{T}} %torus
\nwc{\IZ}{\mathbb{Z}} %integers
\def\bbbone{{\mathchoice {1\mskip-4mu {\rm{l}}} {1\mskip-4mu {\rm{l}}}
{ 1\mskip-4.5mu {\rm{l}}} { 1\mskip-5mu {\rm{l}}}}}
\def\bbleft{{\mathchoice {[\mskip-3mu {[}} {[\mskip-3mu {[}}{[\mskip-4mu {[}}{[\mskip-5mu {[}}}}
\def\bbright{{\mathchoice {]\mskip-3mu {]}} {]\mskip-3mu {]}}{]\mskip-4mu {]}}{]\mskip-5mu {]}}}}
\nwc{\setK}{\bbleft 1,K \bbright}
\nwc{\setN}{\bbleft 1,\cN \bbright}
 \newcommand{\Lim}{\mathop{\longrightarrow}\limits}
%Straight (vector) bold letters

%lowercase

\nwc{\va}{{\bf a}}
\nwc{\vb}{{\bf b}}
\nwc{\vc}{{\bf c}}
\nwc{\vd}{{\bf d}}
\nwc{\ve}{{\bf e}}
\nwc{\vf}{{\bf f}}
\nwc{\vg}{{\bf g}}
\nwc{\vh}{{\bf h}}
\nwc{\vi}{{\bf i}}
\nwc{\vI}{{\bf I}}
\nwc{\vj}{{\bf j}}
\nwc{\vk}{{\bf k}}
\nwc{\vl}{{\bf l}}
\nwc{\vm}{{\bf m}}
\nwc{\vM}{{\bf M}}
\nwc{\vn}{{\bf n}}
\nwc{\vo}{{\it o}}
\nwc{\vp}{{\bf p}}
\nwc{\vq}{{\bf q}}
\nwc{\vr}{{\bf r}}
\nwc{\vs}{{\bf s}}
\nwc{\vt}{{\bf t}}
\nwc{\vu}{{\bf u}}
\nwc{\vv}{{\bf v}}
\nwc{\vw}{{\bf w}}
\nwc{\vx}{{\bf x}}
\nwc{\vy}{{\bf y}}
\nwc{\vz}{{\bf z}}
\nwc{\bal}{\blds{\alpha}}
\nwc{\bep}{\blds{\epsilon}}
\nwc{\barbep}{\overline{\blds{\epsilon}}}
\nwc{\bnu}{\blds{\nu}}
\nwc{\bmu}{\blds{\mu}}
\nwc{\bet}{\blds{\eta}}

%bold letters
%\b* letters are tilted in math mode and scale in equations. 
%but cannot be used in plain text format.

%I. lowercase

\nwc{\bk}{\blds{k}}
\nwc{\bm}{\blds{m}}
\nwc{\bM}{\blds{M}}
\nwc{\bp}{\blds{p}}
\nwc{\bq}{\blds{q}}
\nwc{\bn}{\blds{n}}
\nwc{\bv}{\blds{v}}
\nwc{\bw}{\blds{w}}
\nwc{\bx}{\blds{x}}
\nwc{\bxi}{\blds{\xi}}
\nwc{\by}{\blds{y}}
\nwc{\bz}{\blds{z}}

%caligraphic

\nwc{\cA}{\ml{A}}
\nwc{\cB}{\ml{B}}
\nwc{\cC}{\ml{C}}
\nwc{\cD}{\ml{D}}
\nwc{\cE}{\ml{E}}
\nwc{\cF}{\ml{F}}
\nwc{\cG}{\ml{G}}
\nwc{\cH}{\ml{H}}
\nwc{\cI}{\ml{I}}
\nwc{\cJ}{\ml{J}}
\nwc{\cK}{\ml{K}}
\nwc{\cL}{\ml{L}}
\nwc{\cM}{\ml{M}}
\nwc{\cN}{\ml{N}}
\nwc{\cO}{\ml{O}}
\nwc{\cP}{\ml{P}}
\nwc{\cQ}{\ml{Q}}
\nwc{\cR}{\ml{R}}
\nwc{\cS}{\ml{S}}
\nwc{\cT}{\ml{T}}
\nwc{\cU}{\ml{U}}
\nwc{\cV}{\ml{V}}
\nwc{\cW}{\ml{W}}
\nwc{\cX}{\ml{X}}
\nwc{\cY}{\ml{Y}}
\nwc{\cZ}{\ml{Z}}

\nwc{\fA}{\mathfrak{a}}
\nwc{\fB}{\mathfrak{b}}
\nwc{\fC}{\mathfrak{c}}
\nwc{\fD}{\mathfrak{d}}
\nwc{\fE}{\mathfrak{e}}
\nwc{\fF}{\mathfrak{f}}
\nwc{\fG}{\mathfrak{g}}
\nwc{\fH}{\mathfrak{h}}
\nwc{\fI}{\mathfrak{i}}
\nwc{\fJ}{\mathfrak{j}}
\nwc{\fK}{\mathfrak{k}}
\nwc{\fL}{\mathfrak{l}}
\nwc{\fM}{\mathfrak{m}}
\nwc{\fN}{\mathfrak{n}}
\nwc{\fO}{\mathfrak{o}}
\nwc{\fP}{\mathfrak{p}}
\nwc{\fQ}{\mathfrak{q}}
\nwc{\fR}{\mathfrak{r}}
\nwc{\fS}{\mathfrak{s}}
\nwc{\fT}{\mathfrak{t}}
\nwc{\fU}{\mathfrak{u}}
\nwc{\fV}{\mathfrak{v}}
\nwc{\fW}{\mathfrak{w}}
\nwc{\fX}{\mathfrak{x}}
\nwc{\fY}{\mathfrak{y}}
\nwc{\fZ}{\mathfrak{z}}

%% (wide)tilde letters

\nwc{\tA}{\widetilde{A}}
\nwc{\tB}{\widetilde{B}}
\nwc{\tE}{E^{\vareps}}
%\nwc{\tcO}{\widetilde{\mathcal{O}}}
\nwc{\tk}{\tilde k}
\nwc{\tN}{\tilde N}
\nwc{\tP}{\widetilde{P}}
\nwc{\tQ}{\widetilde{Q}}
\nwc{\tR}{\widetilde{R}}
\nwc{\tV}{\widetilde{V}}
\nwc{\tW}{\widetilde{W}}
\nwc{\ty}{\tilde y}
\nwc{\teta}{\tilde \eta}
\nwc{\tdelta}{\tilde \delta}
\nwc{\tlambda}{\tilde \lambda}
%\nwc{\tchi}{\tilde \chi}
\nwc{\ttheta}{\tilde \theta}
\nwc{\tvartheta}{\tilde \vartheta}
\nwc{\tPhi}{\widetilde \Phi}
\nwc{\tpsi}{\tilde \psi}
\nwc{\tmu}{\tilde \mu}

%miscellany
\nwc{\To}{\longrightarrow} %limits

\nwc{\ad}{\rm ad}
\nwc{\eps}{\epsilon}
\nwc{\ep}{\epsilon}
\nwc{\vareps}{\varepsilon}

\def\bom{\mathbf{\omega}}
\def\om{{\omega}}
\def\ep{\epsilon}
\def\tr{{\rm tr}}
\def\diag{{\rm diag}}
\def\Tr{{\rm Tr}}
\def\i{{\rm i}}
\def\mi{{\rm i}}
\def\e{{\rm e}}
\def\sq2{\sqrt{2}}
\def\sqn{\sqrt{N}}
\def\vol{\mathrm{vol}}
\def\defi{\stackrel{\rm def}{=}}
\def\t2{{\mathbb T}^2}
%\def\tt2{{\mathbb T}^2}
%\nwc{\t1}{{\mathbb T}^1}
\def\s2{{\mathbb S}^2}
\def\hn{\mathcal{H}_{N}}
\def\shbar{\sqrt{\hbar}}
\def\A{\mathcal{A}}
\def\N{\mathbb{N}}
\def\T{\mathbb{T}}
\def\R{\mathbb{R}}
\def\RR{\mathbb{R}}
\def\Z{\mathbb{Z}}
\def\C{\mathbb{C}}
\def\O{\mathcal{O}}
\def\Sp{\mathcal{S}_+}
\def\Lap{\triangle}
\nwc{\lap}{\bigtriangleup}
\nwc{\rest}{\restriction}
\nwc{\Diff}{\operatorname{Diff}}
\nwc{\diam}{\operatorname{diam}}
\nwc{\Res}{\operatorname{Res}}
\nwc{\Spec}{\operatorname{Spec}}
\nwc{\Vol}{\operatorname{Vol}}
\nwc{\Op}{\operatorname{Op}}
\nwc{\supp}{\operatorname{supp}}
\nwc{\Span}{\operatorname{span}}

\nwc{\dia}{\varepsilon}
\nwc{\cut}{f}
\nwc{\qm}{u_\hbar}

\def\hto0{\xrightarrow{\hbar\to 0}}
\def\htoo{\stackrel{h\to 0}{\longrightarrow}}
\def\rto0{\xrightarrow{r\to 0}}
\def\rtoo{\stackrel{r\to 0}{\longrightarrow}}
\def\ntoinf{\xrightarrow{n\to +\infty}}

\providecommand{\abs}[1]{\lvert#1\rvert}
\providecommand{\norm}[1]{\lVert#1\rVert}
\providecommand{\set}[1]{\left\{#1\right\}}

\nwc{\la}{\langle}
\nwc{\ra}{\rangle}
\nwc{\lp}{\left(}
\nwc{\rp}{\right)}

%\nwc{\bal}{\begin{align}}
\nwc{\bequ}{\begin{equation}}
\nwc{\be}{\begin{equation}}
\nwc{\ben}{\begin{equation*}}
\nwc{\bea}{\begin{eqnarray}}
\nwc{\bean}{\begin{eqnarray*}}
\nwc{\bit}{\begin{itemize}}
\nwc{\bver}{\begin{verbatim}}

%\nwc{\eal}{\end{align}}
\nwc{\eequ}{\end{equation}}
\nwc{\ee}{\end{equation}}
\nwc{\een}{\end{equation*}}
\nwc{\eea}{\end{eqnarray}}
\nwc{\eean}{\end{eqnarray*}}
\nwc{\eit}{\end{itemize}}
\nwc{\ever}{\end{verbatim}}

\newcommand{\defeq}{\stackrel{\rm{def}}{=}}

\section{Introduction}

\subsection{The Schr\"odinger equation in the large time and high frequency
r\'egime}

This article is concerned with the dynamics of the linear equation
\begin{equation}
\left\{
\begin{array}
[c]{l}
ih\partial_{t}\psi_{h}\left(  t,x\right)  =H(hD_{x})\psi_{h}\left(
t,x\right)  ,\qquad(t,x)\in\R\times\T^{d},\\
{\psi_{h}}_{|t=0}=u_{h},
\end{array}
\right.  \label{e:eq}
\end{equation}
on the torus $\mathbb{T}^{d}:=\left(
\mathbb{R/}2\pi\mathbb{Z}\right)  ^{d}$, with $H$ a
smooth,\footnote{For the sake of simplicity, we shall assume that
$H\in\mathcal{C}^{\infty}\left(  \mathbb{R}^{d}\right)  $. However
the smoothness assumption on $H$ can be relaxed to
$\mathcal{C}^{k}$, where $k$ large enough, in most results of this
article.} real-valued function on $(\R^{d})^*$ (the dual of
$\R^{d}$), and $h>0$. In other words, $H$ is a function on the
cotangent bundle $T^*\T^d=\T^d\times (\R^{d})^*$ that does not
depend on the first variable, and thus gives rise to a completely
integrable Hamiltonian flow. We are interested in the simultaneous
limits $h\rightarrow0^{+}$ (high frequency limit) and
$t\rightarrow+ \infty$ (large time evolution). Our results give a
description of the limits of sequences of \textquotedblleft
position densities\textquotedblright\ $\left\vert \psi_{h}\left(
t_{h},x\right)  \right\vert ^{2}$ at times $t_{h}$ that tend to
infinity as $h\rightarrow0^{+}$.

To be more specific, let us denote by $S_{h}^{t}$ the propagator associated
with $H(hD_{x})$:
\[
S_{h}^{t}:=\mathrm{e}^{-i{\frac{t}{h}}H(hD_{x})}.
\]
Fix a $\emph{time}$ \emph{scale}, that is, a function
\begin{align*}
\tau:\IR_{+}^{\ast}  &  \longrightarrow\IR_{+}^{\ast}\\
h  &  \longmapsto\tau_{h},
\end{align*}
such that $\liminf_{h\rightarrow0^{+}}\tau
_{h}>0$ (actually, we shall be mainly concerned in functions that go to
$+\infty$ as $h\rightarrow0^{+}$). Consider a sequence of initial conditions
$(u_{h})$, normalised in $L^{2}(\mathbb{T}^{d})$: $\left\Vert u_{h}\right\Vert
_{L^{2}\left(  \mathbb{T}^{d}\right)  }=1$ for $h>0$, and $h$-oscillating in
the terminology of \cite{GerardMesuresSemi91, GerLeich93}, \emph{i.e.}:
\begin{equation}
\limsup_{h\rightarrow0^{+}}\left\Vert \mathbf{1}_{\left[  0,R\right]  }\left(
-h^{2}\Delta\right)  u_{h}\right\Vert _{L^{2}\left(  \mathbb{T}^{d}\right)
}\Lim_{R\To\infty}0, \label{e:hosc}
\end{equation}
where $\mathbf{1}_{\left[  0,R\right]  }$ is the characteristic function of
the interval $\left[  0,R\right]  $. Our main object of interest is the
density $\left\vert S_{h}^{t}u_{h}\right\vert ^{2}$, and we introduce the
probability measures on $\mathbb{T}^{d}$
\[
\mathbb{\nu}_{h}\left(  t,dx\right)  :=\left\vert S_{h}^{t}u_{h}(x)\right\vert
^{2}dx;
\]
the unitary character of $S_{h}^{t}$ implies that $\nu_{h}\in\mathcal{C}
\left(  \mathbb{R};\mathcal{P}\left(  \mathbb{T}^{d}\right)  \right)
$.\footnote{In what follows, $\mathcal{P}\left(  X\right)  $ stands for the
set of Radon probability measures on a Polish space $X$.} To study the
long-time behaviour of the dynamics, we rescale time by $\tau_{h}$ and look at
the time-scaled probability densities:
\begin{equation}
\nu_{h}\left(  \tau_{h}t,dx\right)  . \label{e:nuht}
\end{equation}
When $t\not =0$ is fixed and $\tau_{h}$ grows too rapidly, it is a notoriously
difficult problem to obtain a description of the limit points (in the
weak-$\ast$ topology) of these probability measures as $h\rightarrow0^{+}$,
for rich enough families of initial data $u_{h}$. See for instance
\cite{Schub-largetimes, Paul11} in the case where the underlying classical
dynamics is chaotic, the $u_{h}$ are a family of lagrangian states, and
$\tau_{h}=h^{-2+\epsilon}$. In completely integrable situations, such as the
one we consider here, the problem is of a different nature, but rapidly leads
to intricate number theoretical issues \cite{MarklofPoisson, MarklofPoisson2,
MarklofSquares}.

We soften the problem by considering the family of probability measures
\eqref{e:nuht} as elements of $L^{\infty}\left(  \mathbb{R};\mathcal{P}\left(
\mathbb{T}^{d}\right)  \right)  $. Our goal will be to give a precise
description of the set $\mathcal{M}\left(  \tau\right)  $ of their
accumulation points in the weak-$\ast$ topology for $L^{\infty}\left(
\mathbb{R};\mathcal{P}\left(  \mathbb{T}^{d}\right)  \right)  $, obtained as
$\left(  u_{h}\right)  $ varies among all possible sequences of initial data
$h$-oscillating and normalised in $L^{2}\left(  \mathbb{T}^{d}\right)  $.

The compactness of $\mathbb{T}^{d}$ ensures that $\mathcal{M}\left(
\tau\right)  $ is non-empty. Having $\nu\in\mathcal{M}\left(  \tau\right)  $
is equivalent to the existence of a sequence $(h_{n})$ going to $0$ and of a
normalised, $h_{n}$-oscillating sequence $\left(  u_{h_{n}}\right)  $ in
$L^{2}\left(  \mathbb{T}^{d}\right)  $ such that:
\begin{equation}
\label{e:averagelim}\lim_{n\rightarrow+\infty}\frac{1}{\tau_{h_{n}}}
\int_{\mathbb{\tau}_{h_{n}}a}^{\tau_{h_{n}}b}\int_{\mathbb{T}^{d}}\chi\left(
x\right)  \left\vert S_{h_{n}}^{t}u_{h_{n}}\left(  x\right)  \right\vert
^{2}dxdt=\int_{a}^{b}\int_{\mathbb{T}^{d}}\chi\left(  x\right)  \nu\left(
t,dx\right)  dt,
\end{equation}
for every real numbers $a<b$ and every $\chi\in\mathcal{C}\left(
\mathbb{T}^{d}\right)  $. In other words, we are averaging the densities
$\left\vert S_{h}^{t}u_{h}(x)\right\vert ^{2}$ over time intervals of size
$\tau_{h}$. This averaging, as we shall see, makes the study more tractable.

If case \eqref{e:averagelim} occurs, we shall say that $\nu$ is obtained
through the sequence $\left(  u_{h_{n}}\right)  $. To simplify the notation, when
no confusion can arise, we shall simply write that $h\To0^{+}$ to mean that we
are considering a (discrete) sequence $h_{n}$ going to $0^{+}$, and we shall
denote by $\left(  u_{h}\right)  $ (instead of $\left(  u_{h_{n}}\right)  $)
the corresponding family of functions.

\begin{remark}
\label{r:boundedtime}When the function $\tau$ is bounded, the convergence of
$\nu_{h}\left(  \tau_{h}t,\cdot\right)  $ to an accumulation point $\nu\left(
t,\cdot\right)  $ is locally uniform in $t$. Moreover, $\nu$ can be completely
described in terms of semiclassical defect measures of the corresponding
sequence of initial data $\left(  u_{h}\right)  $, transported by the
classical Hamiltonian flow $\phi_{s}:T^{\ast}\mathbb{T}^{d}\To T^{\ast
}\mathbb{T}^{d}$ generated by $H$, which in this case is completely integrable. Explicitly,
\[
\phi_{s}(x,\xi):=(x+sdH(\xi),\xi).
\]
This is nothing but a formulation of Egorov's theorem (see, for instance,
\cite{EvansZworski}). Consider for instance, the case where the initial data
$u_{h}$ are coherent states~: fix $\rho\in\mathcal{C}_{c}^{\infty}\left(
\mathbb{R}^{d}\right)  $ with $\left\Vert \rho\right\Vert _{L^{2}\left(
\mathbb{R}^{d}\right)  }=1$, fix $\left(  x_{0},\xi_{0}\right)  \in
\mathbb{R}^{d}\times\mathbb{R}^{d}$, and let $u_{h}\left(  x\right)  $ be the
$2\pi\mathbb{Z}^{d}$-periodization of the following coherent state:
\[
\frac{1}{h^{d/4}}\rho\left(  \frac{x-x_{0}}{\sqrt{h}}\right)  e^{i\frac
{\xi_{0}}{h}\cdot x}.
\]
Then $\nu_{h}\left(  t,\cdot\right)  $ converges, for every $t\in\mathbb{R}$,
to:
\[
\delta_{x_{0}+tdH\left(  \xi_{0}\right)  }\left(  x\right)  .
\]

\end{remark}

When the time scale $\tau_{h}$ is unbounded, the $t$-dependence of elements
$\nu\in\mathcal{M}\left(  \tau\right)  $ is not described by such a simple
propagation law. From now on we shall only consider the case where $\tau
_{h}\Lim_{h\To 0} +\infty$.

The problem of describing the elements in $\mathcal{M}\left(  \tau\right)  $
for some time scale $\left(  \tau_{h}\right)  $ is related to several aspects
of the dynamics of the flow $S_{h}^{t}$ such as dispersive effects and unique
continuation. In \cite{AnantharamanMaciaSurv, MaciaDispersion} the reader will
find a description of these issues in the case where the propagator $S_{h}
^{t}$ is replaced by the semiclassical Schr\"{o}dinger flow $e^{iht\Delta}$
corresponding to the Laplacian on an arbitrary compact Riemannian manifold
(corresponding to $H(h D_{x})=-h^{2}\Delta$ in the case of flat tori). In that
setting, the time scale $\tau_{h}=1/h$ appears in a natural way, since it
transforms the semiclassical propagator into the non-scaled flow
$e^{ih\tau_{h}t\Delta}=e^{it\Delta}$. The possible accumulation points of
sequences of probability densities of the form $|e^{it\Delta}u_{h}|^{2}$
depend on the nature of the dynamics of the geodesic flow in the manifold
under consideration. Even in the case that the geodesic flow is completely
integrable, different type of concentration phenomena may occur, depending on
fine geometrical issues (compare the situation in Zoll manifolds
\cite{MaciaAv} and on flat tori \cite{MaciaTorus, AnantharamanMacia}). When
the geodesic flow has the Anosov property, the results in \cite{AnRiv} rule
out concentration on sets of small dimensions, by proving lower bounds on the
Kolmogorov-Sinai entropy of semiclassical defect measures.

\subsection{Semiclassical defect measures}

Our results are more naturally described in terms of \emph{Wigner
distributions }and \emph{semiclassical measures} (these are the semiclassical
version of the \emph{microlocal defect measures} \cite{GerardMDM91, TartarH},
and have also been called \emph{microlocal lifts} in the recent
literature about quantum unique ergodicity, see for instance the celebrated paper \cite{LindenQUE}). The \emph{Wigner distribution} associated
to $u_{h}$ (at scale $h$) is a distribution on the cotangent bundle $T^{\ast
}\mathbb{T}^{d}$, defined by
\begin{equation}
\int_{T^{\ast}\mathbb{T}^{d}}a(x,\xi)w_{u_{h}}^{h}(dx,d\xi)=\left\langle
u_{h},\Op_{h}(a)u_{h}\right\rangle _{L^{2}(\mathbb{T}^{d})},\qquad
\mbox{ for all }a\in\mathcal{C}_{c}^{\infty}(T^{\ast}\mathbb{T}^{d}),
\label{e:initialwigner}
\end{equation}
where $\Op_{h}(a)$ is the operator on $L^{2}(\mathbb{T}^{d})$ associated to
$a$ by the Weyl quantization. The reader not familiar with these objects can
consult the appendix of this article. For the moment, just recall that if
$\chi$ is a smooth function on $T^{\ast}\mathbb{T}^{d}=\mathbb{T}^{d}
\times(\R^{d})^*$ that depends only on the first coordinate, then
\begin{equation}
\int_{T^{\ast}\mathbb{T}^{d}}\chi(x)w_{u_{h}}^{h}(dx,d\xi)=\int_{\mathbb{T}
^{d}}\chi(x)|u_{h}(x)|^{2}dx. \label{e:proj}
\end{equation}
The main object of our study will be the (time-scaled) Wigner distributions
corresponding to solutions to (\ref{e:eq}):
\[
w_{h}(t,\cdot):=w_{S_{h}^{\tau_{h}t}u_{h}}^{h}.
\]
The map $t\longmapsto w_{h}(t,\cdot)$ belongs to $L^{\infty}(\R;\cD^{\prime
}\left(  T^{\ast}\mathbb{T}^{d}\right)  )$, and is uniformly bounded in that
space as $h\To0^{+}$ whenever $\left(  u_{h}\right)  $ is normalised in
$L^{2}\left(  \mathbb{T}^{d}\right)  $. Thus, one can extract subsequences
that converge in the weak-$\ast$ topology on $L^{\infty}(\R;\cD^{\prime
}\left(  T^{\ast}\mathbb{T}^{d}\right)  )$. In other words, after possibly
extracting a subsequence, we have
\[
\int_{\R}\int_{T^*\T^d}\varphi(t)a(x,\xi)w_{h}(t,dx,d\xi)dt\Lim_{h\To0}\int_{\R}\int_{T^*\T^d }
\varphi(t)a(x,\xi)\mu(t,dx,d\xi)dt
\]
for all $\varphi\in L^{1}(\R)$ and $a\in\mathcal{C}_{c}^{\infty}(T^{\ast
}\mathbb{T}^{d})$, and the limit $\mu$ belongs to $L^{\infty}\left(
\mathbb{R};\mathcal{M}_{+}\left(  T^{\ast}\mathbb{T}^{d}\right)  \right)
$.\footnote{$\mathcal{M}_{+}\left(  X\right)  $ denotes the set of positive
Radon measures on a Polish space $X$.}

The set of limit points thus obtained, as $\left(  u_{h}\right)  $ varies
among normalised sequences, will be denoted by $\mathcal{\widetilde{M}}\left(
\tau\right)  $. We shall refer to its elements as (time-dependent)
semiclassical measures.

Moreover, if $\left(  u_{h}\right)  $ is $h$-oscillating, it follows that
$\mu\in L^{\infty}\left(  \mathbb{R};\mathcal{P}\left(  T^{\ast}\mathbb{T}
^{d}\right)  \right)  $ and identity (\ref{e:proj}) is also verified in the
limit~: if $\nu\left(  t,\cdot\right)  \ $is the image of $\mu\left(
t,\cdot\right)  $ under the projection map $\left(  x,\xi\right)  \longmapsto
x$, then%
\[
\int_{a}^{b}\int_{\mathbb{T}^{d}}\chi\left(  x\right)  |S_{h}^{\tau_{h}t}
u_{h}\left(  x\right)  |^{2}dxdt\Lim_{h\To0}\int_{a}^{b}\int_{T^{\ast
}\mathbb{T}^{d}}\chi\left(  x\right)  \mu\left(  t,dx,d\xi\right)  dt,
\]
for every $a<b$ and every $\chi\in\mathcal{C}^{\infty}\left(  \mathbb{T}
^{d}\right)  $. Therefore, $\mathcal{M}\left(  \tau\right)  $ coincides with
the set of projections onto $x$ of semiclassical measures in
$\mathcal{\widetilde{M}}\left(  \tau\right)  $ corresponding to $h$
-oscillating sequences (see~\cite{GerardMesuresSemi91,GerLeich93}).

It is also shown in the appendix that the elements of $\mathcal{\widetilde{M}
}\left(  \tau\right)  $ are measures that are invariant by the Hamiltonian
flow $\phi_{s}$:
\[
\int_{T^*\T^d}
a\circ \phi_s(x,\xi)\mu(t,dx,d\xi) = \int_{T^*\T^d}
a(x,\xi)\mu(t,dx,d\xi),\;\forall\mu\in\mathcal{\widetilde
{M}}\left(  \tau\right), \forall s\in \R, \mbox{a.e.} \,t\in \R^d.
\]

\subsection{Regularity of semiclassical measures}

The main results in this article are aimed at obtaining a precise description
of the elements in $\mathcal{\widetilde{M}}\left(  \tau\right)  $ (and, as a
consequence, of those of $\mathcal{M}\left(  \tau\right)  $). We first present
a regularity result which emphasises the critical character of the time scale
$\tau_{h}=1/h$ in situations in which the Hessian of $H$ is non-degenerate,
definite (positive or negative).

\begin{theorem}
\label{t:main}(1) If $\tau_{h}\ll1/h$ then $\mathcal{M}\left(
\tau\right)  $ contains elements that are singular with respect to
the Lebesgue measure $dtdx$. Besides,
$\mathcal{\widetilde{M}}\left(  \tau\right)  $ contains all
uniform orbit measures of $\phi_{s}$.

(2) Suppose $\tau_{h}\sim1/h$ or $\tau_{h}\gg1/h$. Assume that the Hessian
$d^{2}H(\xi)$ is definite for all~$\xi$. Then
\[
\mathcal{M}\left(  \tau\right)  \subseteq L^{\infty}\left(  \mathbb{R}
;L^{1}\left(  \mathbb{T}^{d}\right)  \right)  ,
\]
\emph{i.e. }the elements of $\mathcal{M}\left(  \tau\right)  $ are absolutely
continuous with respect to $dtdx$.
\end{theorem}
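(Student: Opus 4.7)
I would realise every uniform orbit measure of $\phi_{s}$ as a semiclassical limit through an explicit construction, which also produces singular elements of $\mathcal{M}(\tau)$. Fix $\xi_{0}\in\mathbb{R}^{d}$ and let $T_{\xi_{0}}\subseteq\mathbb{T}^{d}$ denote the closure of $\{s\,dH(\xi_{0})\bmod 2\pi\mathbb{Z}^{d}:s\in\mathbb{R}\}$, with Haar probability measure $\lambda$ and dimension $k$. Choose coordinates so that $T_{\xi_{0}}=\mathbb{T}^{k}\times\{0\}^{d-k}$, pick $\rho\in\mathcal{C}_{c}^{\infty}(\mathbb{R}^{d-k})$ of unit $L^{2}$-norm, and take $u_{h}(x)$ to be the $2\pi\mathbb{Z}^{d}$-periodisation of $(2\pi)^{-k/2}h^{-(d-k)/4}\rho(x'/\sqrt{h})\mathrm{e}^{i\xi_{0}\cdot x/h}$. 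Then $(u_{h})$ is $L^{2}$-normalised and $h$-oscillating, with Wigner transform converging to $\mu_{0}:=\delta_{\xi_{0}}\otimes\lambda$, itself $\phi_{s}$-invariant. A direct Fourier computation, using Taylor expansion of $H$ about $\xi_{0}$ and the fact that $dH(\xi_{0})$ is tangent to $T_{\xi_{0}}$, shows that $S_{h}^{\tau_{h}t}u_{h}$ evolves only by a global phase and by a free-Schr\"odinger dispersion in the normal variable $x'$ with effective time $\tau_{h}th$. This dispersion spreads the normal profile to scale $\sqrt{\tau_{h}h}$, which is $o(1)$ precisely when $\tau_{h}\ll 1/h$, so one obtains $w_{h}(t,\cdot)\to\mu_{0}$ uniformly on compact $t$-intervals, hence $\mu_{0}\in\widetilde{\mathcal{M}}(\tau)$. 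When $k<d$ (holding on a dense set of $\xi_{0}$), the projection $\lambda$ is singular with respect to $dx$.

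\textbf{Part (2), setup.} For $\tau_{h}\gtrsim 1/h$ the dispersion above destroys the normal concentration, and I would prove absolute continuity by Fourier analysis. Decompose $u_{h}=\sum_{k\in\mathbb{Z}^{d}}c_{k}\mathrm{e}^{ik\cdot x}$ and compute, for $m\in\mathbb{Z}^{d}\setminus\{0\}$,
\[
\widehat{\nu_{h}}(\tau_{h}t,m)=\sum_{k\in\mathbb{Z}^{d}}c_{k}\overline{c_{k+m}}\exp\!\Bigl(-i\tfrac{\tau_{h}t}{h}\bigl(H(hk)-H(h(k+m))\bigr)\Bigr).
\]
Taylor expansion gives the phase as $-\tau_{h}t\,m\cdot dH(hk)+O(\tau_{h}th|m|^{2})$. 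Testing against $\varphi\in\mathcal{C}_{c}^{\infty}(\mathbb{R})$, the $k$-th term is weighted by $\hat{\varphi}\bigl(-\tau_{h}\,m\cdot dH(hk)\bigr)$, which is $o(1)$ unless $\xi:=hk$ lies within $O(1/\tau_{h})\lesssim O(h)$ of the resonant hypersurface $\Sigma_{m}:=\{m\cdot dH(\xi)=0\}$. Non-degeneracy of $d^{2}H$ makes $\Sigma_{m}$ smooth with normal $d^{2}H\cdot m\neq 0$; definiteness makes the level sets of $H$ convex, so $\Sigma_{m}$ meets each energy shell in a compact set.

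\textbf{Part (2), hard step and main obstacle.} The contributions of frequencies with $hk$ within $O(h)$ of $\Sigma_{m}$ require a two-microlocal analysis at scale $\sqrt{h}$ transverse to $\Sigma_{m}$: I would lift the semiclassical measure $\mu$ to a two-microlocal measure $\mu^{\natural}_{m}$ over the blow-up of $\Sigma_{m}$. The rescaled Schr\"odinger equation at time scale $1/h$ endows $\mu^{\natural}_{m}$ with an extra invariance by a one-parameter group governed by the restriction of $d^{2}H$ to $\Sigma_{m}$; combined with the $\phi_{s}$-invariance of $\mu$ and the definiteness of $d^{2}H$, this forces the $x$-projection of $\mu^{\natural}_{m}$ to be Lebesgue-absolutely continuous. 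Summing over $m\in\mathbb{Z}^{d}\setminus\{0\}$ with uniform $L^{1}$-bounds yields $\nu\in L^{\infty}(\mathbb{R};L^{1}(\mathbb{T}^{d}))$. The main obstacle is this two-microlocal step: identifying the correct blow-up, proving that the extra symmetry survives to the limit (a non-trivial Weyl-quantisation computation), and exploiting definiteness of $d^{2}H$ (rather than mere non-degeneracy) to exclude partial concentration on $\Sigma_{m}$ and to control the sum in $m$ uniformly.
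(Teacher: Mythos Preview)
Your idea of building an initial datum already spread along the orbit torus is a legitimate alternative to what the paper does, but the specific realisation has a quantitative error. With normal width $\sqrt{h}$, a standard stationary-phase / Gaussian-dispersion computation shows that after time $\tau_h t$ the normal spread is of order $\sqrt{h}\,\tau_h$, not $\sqrt{\tau_h h}$; this is $o(1)$ only for $\tau_h\ll 1/\sqrt{h}$, which does not cover the full range $\tau_h\ll 1/h$. The paper instead takes a coherent state concentrated at a single point $(x_0,\xi_0)$ with a free width $\varepsilon_h\gg h\tau_h$ (Proposition~\ref{prop:pWP}); the limiting measure $\mu(t,\cdot)$ is then automatically $\phi_s$-invariant, supported on $\{\xi=\xi_0\}$, and agrees with $\delta_{x_0}\otimes\delta_{\xi_0}$ on symbols with $x$-Fourier modes in $\Lambda_{\xi_0}$, which forces $\mu(t,\cdot)=\mu_{(x_0,\xi_0)}$. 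This avoids both your periodisation issue for $e^{i\xi_0\cdot x/h}$ in the tangential variables and the need to analyse the normal dispersion at all.

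\textbf{Part (2).} You have correctly located the heart of the matter, but two structural ingredients are missing. First, the two-microlocal lift near $\Sigma_m$ does \emph{not} produce a single object with an extra invariance: one must split it into a piece $\tilde\mu_\Lambda$ living at finite transverse scale (distance to $\Sigma_m$ of order $h$, not $\sqrt{h}$) and a piece $\tilde\mu^\Lambda$ living on the sphere at infinity. The extra flow invariance you describe applies only to $\tilde\mu^\Lambda$; the finite piece $\tilde\mu_\Lambda$ instead obeys a genuine Schr\"odinger-type propagation in the transverse variable, and its absolute continuity comes from realising it as $\operatorname{Tr}(a_\sigma\, e^{-\frac{it}{2}d^2H(\sigma)D_y\cdot D_y}M(d\sigma)e^{\frac{it}{2}d^2H(\sigma)D_y\cdot D_y})$ for a trace-class operator-valued measure $M$ (Proposition~\ref{prop:opvame}), not from any invariance argument. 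Second, organising the analysis mode-by-mode in $m$ only treats rank-one resonances: your invariance argument for $\tilde\mu^\Lambda$ combined with $\phi_s$-invariance does force constancy in $x$ when $\operatorname{rk}\Lambda=1$ (Remark~\ref{r:nice}), but on intersections $\Sigma_{m_1}\cap\Sigma_{m_2}\cap\cdots$ the two flows no longer span $\mathbb{R}^d$, and the paper handles this by an inductive second-microlocalisation along strictly decreasing chains of primitive submodules $\Lambda_1\supset\Lambda_2\supset\cdots$ (Section~\ref{s:successive}). The ``uniform $L^1$-bounds in $m$'' you invoke are precisely what this nested structure and the trace-class property supply; without them the sum over $m$ is not controlled. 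The reduction from $\tau_h\gg 1/h$ to $\tau_h\sim 1/h$ is a separate soft argument (Proposition~\ref{p:convsm}).
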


\begin{remark}
Theorem \ref{t:main}(2) applies in particular when the data $(u_{h})$ are
eigenfunctions of $H(hD_{x})$, and shows (assuming the Hessian of $H$ is
definite) that the weak limits of the probability measures $|u_{h}(x)|^{2} dx$
are absolutely continuous.
\end{remark}

\begin{remark}
The conclusion of Theorem \ref{t:main}(2) may fail if the condition on the
Hessian of $H$ is not satisfied. We give here two counter-examples.

Fix $\omega\in\mathbb{R}^{d}$ and take $H\left(  \xi\right)  =\xi\cdot\omega$.
Let $\mu_{0}$ be an accumulation point in $\cD^{\prime}\left(  T^{\ast
}\mathbb{T}^{d}\right)  $ of the Wigner distributions $\left(  w_{u_{h}}
^{h}\right)  $ defined in \eqref{e:initialwigner}, associated to the initial
data $(u_{h})$. Let $\mu\in\mathcal{\widetilde{M}}\left(  \tau\right)  $ be
the limit of $w_{S_{h}^{\tau_{h}t}u_{h}}^{h}$ in $L^{\infty}(\R;\cD^{\prime
}\left(  T^{\ast}\mathbb{T}^{d}\right)  )$. Then an application of Egorov's
theorem (actually, a particularly simple adaptation of the proof of Theorem 4
in \cite{MaciaAv}) gives the relation, valid for any time scale $\left(
\tau_{h}\right)  $~:
\[
\int_{T^{\ast}\mathbb{T}^{d}}a\left(  x,\xi\right)  \mu\left(  t,dx,d\xi
\right)  =\int_{T^{\ast}\mathbb{T}^{d}}\left\langle a\right\rangle \left(
x,\xi\right)  \mu_{0}\left(  dx,d\xi\right)  ,
\]
for any $a\in\mathcal{C}_{c}^{\infty}\left(  T^{\ast}\mathbb{T}^{d}\right)  $
and a.e. $t\in\mathbb{R}$. Here $\left\langle a\right\rangle $ stands for
the average of $a$ along the Hamiltonian flow $\phi_{s}$, that is in our case
\[
\left\langle a\right\rangle \left(  x,\xi\right)  =\lim_{T\rightarrow\infty
}\frac{1}{T}\int_{0}^{T}a\left(  x+s\omega,\xi\right)  ds.
\]
Hence, as soon as $\omega$ is resonant (in the sense of \S \ref{s:decompo})
and $\mu_{0}=\delta_{x_{0}}\otimes\delta_{\xi_{0}}$ for some $\left(
x_{0},\xi_{0}\right)  \in T^{\ast}\mathbb{T}^{d}$, the measure $\mu$ will be
singular with respect to $dtdx$.

It is also easy to provide counter-examples where the Hessian of $H$ is
non-degenerate, but not definite. On the two-dimensional torus $\IT^{2}$,
consider for instance $H(\xi)=\xi_{1}^{2}-\xi_{2}^{2}$, where $\xi=(\xi
_{1},\xi_{2})$. Take for $(u_{h}(x_{1},x_{2}))$ the periodization of
\[
\frac{1}{\left(  2\pi h\right)  ^{1/2}}\rho\left(  \frac{x_{1}-x_{2}}
{h}\right)
\]
where $\rho\in\mathcal{C}_{c}^{\infty}\left(  \mathbb{R}\right)  $ satisfies
$\left\Vert \rho\right\Vert _{L^{2}\left(  \mathbb{R}\right)  }=1$. Then the
functions $u_{h}$ are eigenfunctions of $H(hD_{x})$ and the measures
$|u_{h}(x_{1},x_{2})|^{2}dx_{1}\,dx_{2}$ obviously concentrate on the diagonal
$\{x_{1}=x_{2}\}$.
\end{remark}

Note that statement (2) of Theorem \ref{t:main} has already been
proved in the case $H(\xi)=|\xi|^{2}$ and $\tau_h=1/h$
in~\cite{BourgainQL97} and~\cite{AnantharamanMacia} with different
proofs. However, the extension of the methods in these references
to more general $H$ is not straightforward, even in the case where
$H(\xi)=\xi\cdot A\xi$, where $A$ is a symmetric linear map~:
$(\IR^{d} )^{*}\To \IR^{d}$ (i.e. the Hessian of $H$ is constant),
the difficulty arising when $A$ has irrational coefficients.

The proof in \cite{AnantharamanMacia}  extends to the
$(t,x)$-dependent Hamiltonian $\left\vert \xi\right\vert
^{2}+h^{2}V\left( t, x\right)  $ with $V$ continuous except for
points forming a set of zero-measure. Recently, this has been
extended in \cite{Burq12} to more general perturbations of the
Laplacian (allowing for potentials $V \in
L^{\infty}(\mathbb{R}\times\mathbb{T}^d)$) by means of an abstract
argument that uses the result in \cite{BourgainQL97,
AnantharamanMacia} for $V=0$ as a black-box. In fact, the proof of
the result in \cite{Burq12} applies to our context.

\begin{remark}
Theorem \ref{t:main} and \cite{Burq12} imply that statement (2) of
Theorem \ref{t:main} also holds for sequences of solutions to the
Schr\"odinger equation corresponding to the perturbed Hamiltonian
$H(hD_x) + h \tau_h^{-1} V(t)$ where $V \in L^{\infty}(\mathbb{R};
\mathcal{L}(L^2(\mathbb{T}^d)))$. The size $h \tau_h^{-1}$ of the
perturbation is in some sense optimal; in Section \ref{s:halpha}.3
we present an example communicated to us by J. Wunsch showing that
absolute continuity of the elements of $\mathcal{M}\left(
1/h\right)  $ may fail in the presence of a subprincipal symbol of
order $h^{\beta}$ with $\beta\in\left(  0,2\right)  $ even in the
case $H\left( \xi\right)  =\left\vert \xi\right\vert ^{2}$.
\end{remark}

Theorem \ref{t:main}(2) admits a microlocal refinement, which allows us to
deal with more general Hamiltonians $H$ whose Hessian is not necessarily
definite at every $\xi\in\mathbb{R}^{d}$. Given $\mu\in\mathcal{\widetilde{M}
}\left(  \tau\right)  $ we shall denote by $\bar{\mu}$ the image of $\mu$
under the map $\pi_{2}:(x,\xi)\longmapsto\xi$. It is shown in the appendix
that $\bar{\mu}$ does not depend on $t$ (it can be obtained as $\bar{\mu
}=\left(  \pi_{2}\right)  _{\ast}\mu_{0}$, where the measure $\mu_{0}$ is an
accumulation point in $\mathcal{D}^{\prime}\left(  T^{\ast}\mathbb{T}
^{d}\right)  $ of the sequence $\left(  w_{u_{h}}^{h}\right)  $).

\begin{theorem}
\label{t:lagrangian}Let $\mu\in\mathcal{\widetilde{M}}\left(  1/h\right)  $
and denote by $\mu_{\xi}(t,\cdot)$ the disintegration of $\mu(t,\cdot)$ with
respect to the variable $\xi$, \emph{i.e. }for every $\theta\in L^{1}\left(
\mathbb{R}\right)  $ and every bounded measurable function~$f$:
\[
\int_{\mathbb{R}}\theta(t)\int_{\mathbb{T}^{d}\times\mathbb{R}^{d}}f(x,\xi
)\mu(t,dx,d\xi)dt=\int_{\mathbb{R}}\theta\left(  t\right)  \int_{\mathbb{R}
^{d}}\left(  \int_{\mathbb{T}^{d}}f(x,\xi)\mu_{\xi}(t,dx)\right)  \bar{\mu
}(d\xi)dt.
\]
Then for $\bar{\mu}$-almost every $\xi$ where $d^{2}H(\xi)$ is definite, the
measure $\mu_{\xi}(t,\cdot)$ is absolutely continuous.
\end{theorem}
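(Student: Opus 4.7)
The plan is to reduce the microlocal statement to the global Theorem \ref{t:main}(2) by combining a Fourier-multiplier cut-off in $\xi$ with a modification of $H$ outside that cut-off, chosen so that the modified Hamiltonian has globally definite Hessian. Fix $\xi_0\in (\mathbb{R}^d)^*$ with $d^2H(\xi_0)$ definite, and let $U$ be a relatively compact open neighbourhood of $\xi_0$ on which $d^2H$ remains definite. Near $\xi_0$ the function $H$ is close to its order-two Taylor polynomial $Q_H(\xi)=H(\xi_0)+dH(\xi_0)(\xi-\xi_0)+\tfrac12 d^2H(\xi_0)(\xi-\xi_0,\xi-\xi_0)$; for $U$ small enough and a smooth cut-off $\phi$ equal to $1$ near $\overline U$ and supported in a slightly larger set on which $d^2H$ is still definite, the function
\[
\widetilde H:=\phi H+(1-\phi)Q_H
\]
coincides with $H$ near $\overline U$ and has uniformly definite Hessian everywhere on $(\mathbb{R}^d)^*$. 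Fix any $\chi\in\mathcal{C}_c^\infty(U)$.

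Starting from a normalized sequence $(u_h)$ giving rise to $\mu$, I set $v_h:=\chi(hD_x)u_h$. Since $\chi(hD_x)$, $H(hD_x)$ and $\widetilde H(hD_x)$ are commuting Fourier multipliers on $\mathbb{T}^d$ and $H\equiv\widetilde H$ on $\supp\chi$, this gives the \emph{exact} identity
\[
S_h^{t}v_h=\chi(hD_x)S_h^{t}u_h=\widetilde S_h^{t}v_h,\qquad\widetilde S_h^{t}:=e^{-(i/h)t\widetilde H(hD_x)},
\]
for every $t\in\mathbb{R}$. The sequence $(v_h)$ is automatically $h$-oscillating (its Fourier support lies in $\{|\xi|\le C/h\}$); after extracting a further subsequence along which $w_{\widetilde S_h^{t/h}v_h}^h$ converges to some positive measure $\mu^\chi$, the pseudodifferential identity $\chi(hD_x)^*\Op_h(a)\chi(hD_x)=\Op_h(|\chi|^2 a)+O_{L^2\to L^2}(h)$---whose remainder has operator norm $O(h)$ uniformly in $t$, so no error accumulates over the time scale $\tau_h=1/h$---identifies $\mu^\chi(t,dx,d\xi)=|\chi(\xi)|^2\mu(t,dx,d\xi)$. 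After renormalisation by $\|v_h\|_{L^2}^2\to\int|\chi|^2\,d\bar\mu$, this measure (up to a positive constant) belongs to $\widetilde{\mathcal{M}}(1/h)$ \emph{relative to the Hamiltonian $\widetilde H$}, so Theorem \ref{t:main}(2) applied to $\widetilde H$ yields
\[
\int_{(\mathbb{R}^d)^*}|\chi(\xi)|^2\mu_\xi(t,dx)\,\bar\mu(d\xi)\ll dx\qquad\text{for a.e.\ }t\in\mathbb{R}.
\]

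To conclude, I would cover $\Omega:=\{\xi:d^2H(\xi)\text{ is definite}\}$ by countably many neighbourhoods $U_n$ as above, choose for each $n$ a countable family $(\chi_{n,k})_k\subset\mathcal{C}_c^\infty(U_n)$ dense in $\mathcal{C}_c(U_n)$, apply the previous step to each $\chi_{n,k}$, and take the union in $n$. The step I expect to require the most care is then the final disintegration argument: extracting the $\bar\mu$-a.e.\ pointwise-in-$\xi$ absolute continuity of $\mu_\xi(t,\cdot)$ from the family of averaged statements indexed by $(\chi_{n,k})$ calls for a measurable Lebesgue decomposition of $\mu_\xi$ in $\xi$ together with a Fubini interchange of the ``a.e.\ $t$'' and ``$\bar\mu$-a.e.\ $\xi$'' quantifiers, both of which are standard but deserve attention.
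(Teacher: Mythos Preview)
Your localisation-and-modification argument is correct up to and including the intermediate conclusion
\[
\int_{(\mathbb{R}^d)^*}|\chi(\xi)|^2\,\mu_\xi(t,dx)\,\bar\mu(d\xi)\ \ll\ dx\qquad\text{for every }\chi\in\mathcal{C}_c^\infty(\Omega)\text{ and a.e.\ }t.
\]
The gap is in the final step. Passing from this family of $\xi$-averaged statements to the pointwise conclusion $\mu_\xi(t,\cdot)\ll dx$ for $\bar\mu$-a.e.\ $\xi\in\Omega$ is \emph{not} a standard disintegration fact --- it is false as a general measure-theoretic implication. For a model obstruction in dimension one, take $\bar\mu$ to be Lebesgue measure on an interval $I\subset\mathbb{R}$ of length less than $2\pi$ and set $\mu_\xi:=\delta_{\xi\bmod 2\pi}$ on $\mathbb{T}^1$; then for every $\chi$ supported in $I$,
\[
\int_I|\chi(\xi)|^2\,\delta_{\xi}(dx)\,d\xi=|\chi(x)|^2\,dx\ \ll\ dx,
\]
yet every $\mu_\xi$ is a Dirac mass. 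The measurable Lebesgue decomposition you invoke does not help, precisely because $\int|\chi|^2\mu_\xi^{\mathrm{sing}}\,\bar\mu(d\xi)$ need not itself be singular (same example), so one cannot isolate the singular part after integrating in $\xi$. In other words, Theorem~\ref{t:main}(2) controls only the $x$-\emph{projection} of $\mu$, whereas Theorem~\ref{t:lagrangian} is a statement about the $\xi$-\emph{disintegration}; the latter is genuinely stronger, and Fourier-multiplier localisation in $\xi$ cannot upgrade one to the other.

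The paper takes a different route: it deduces Theorem~\ref{t:lagrangian} from the structural Theorem~\ref{t:precise}, whose formula~\eqref{e:mh2} expresses each $\mu_\Lambda(t,\cdot)$ as an integral over $\sigma\in I_\Lambda$ of traces against a positive trace-class-operator-valued measure $\rho_\Lambda(d\sigma)$ on $L^2(\mathbb{T}^d,\Lambda)$, conjugated by the unitaries $e^{\frac{it}{2}d^2H(\sigma)D_y\cdot D_y}$. Because this representation is already fibred over $\sigma$, absolute continuity of the disintegration follows fibre by fibre (the argument is carried out in Section~6.1 of \cite{AnantharamanMacia}). The two-microlocal decomposition thus supplies exactly the pointwise-in-$\xi$ information that a black-box reduction to Theorem~\ref{t:main}(2) cannot.
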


Let us introduce the closed set
\[
C_{H}:=\left\{  \xi\in\R^{d}:\, d^{2}H(\xi)\text{ is not definite}\right\}  .
\]
The following consequence of Theorem \ref{t:lagrangian} provides a refinement
on Theorem \ref{t:main}(2), in which the global hypothesis on the Hessian of
$H$ is replaced by a hypothesis on the sequence of initial data.

\begin{corollary}
\label{c:mainsharp}Suppose $\nu\in\mathcal{M}\left(  1/h\right)  $ is obtained
through an $h$-oscillating sequence $\left(  u_{h}\right)  $ having a
semiclassical measure $\mu_{0}$ such that $\mu_{0}\left(  \T^{d}\times
C_{H}\right)  =0$. Then $\nu$ is absolutely continuous with respect to $dtdx$.
\end{corollary}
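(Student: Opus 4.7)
The plan is to realise $\nu$ as the $x$-marginal of a semiclassical measure $\mu\in\widetilde{\mathcal{M}}(1/h)$ whose $\xi$-marginal avoids $C_H$, and then read off absolute continuity from Theorem~\ref{t:lagrangian} via the disintegration formula. First, starting from the sequence $(u_h)$ that produces $\nu$, pass to a subsequence (still denoted $h$) along which the time-scaled Wigner distributions $w_{S_h^{t/h}u_h}^{h}$ converge in the weak-$\ast$ topology of $L^{\infty}(\R;\mathcal{D}'(T^{\ast}\T^d))$ to some $\mu\in\widetilde{\mathcal{M}}(1/h)$, and simultaneously the initial Wigner distributions $w_{u_h}^{h}$ converge in $\mathcal{D}'(T^{\ast}\T^d)$ to the semiclassical measure $\mu_0$ of the hypothesis. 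Because $(u_h)$ is $h$-oscillating, the discussion preceding the statement of Theorem~\ref{t:lagrangian} ensures two things: first, $\nu(t,\cdot)$ coincides with the image of $\mu(t,\cdot)$ under $(x,\xi)\mapsto x$; second, the $\xi$-marginal $\bar{\mu}$ is independent of $t$ and is given by $\bar{\mu}=(\pi_2)_{\ast}\mu_0$.

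Now I would use the hypothesis to transfer the regularity information. Since
\[
\bar{\mu}(C_H)=(\pi_2)_{\ast}\mu_0(C_H)=\mu_0\left(\T^d\times C_H\right)=0,
\]
the measure $\bar{\mu}$ is concentrated on the open set $\{\xi:d^{2}H(\xi)\text{ is definite}\}$. Apply Theorem~\ref{t:lagrangian} to $\mu$: for $\bar{\mu}$-almost every $\xi$ (in particular, for $\bar{\mu}$-a.e.\ $\xi\notin C_H$), the disintegration $\mu_\xi(t,\cdot)$ is absolutely continuous with respect to Lebesgue measure on $\T^d$ for a.e.\ $t\in\R$.

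Finally, I would conclude by integrating the disintegration against a Lebesgue-null set. Let $A\subset\T^d$ be Borel with $|A|=0$, and let $\theta\in L^{1}(\R)$ be non-negative. Since $\nu(t,\cdot)$ is the image of $\mu(t,\cdot)$ under the projection onto the $x$-variable, the disintegration identity stated in Theorem~\ref{t:lagrangian} yields
\[
\int_{\R}\theta(t)\,\nu(t,A)\,dt=\int_{\R}\theta(t)\int_{\R^d}\mu_\xi(t,A)\,\bar{\mu}(d\xi)\,dt=0,
\]
because the inner integrand vanishes for $\bar{\mu}$-a.e.\ $\xi$ and a.e.\ $t$ by the previous step. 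As $\theta$ is arbitrary, $\nu(t,A)=0$ for a.e.\ $t\in\R$, which is precisely the statement that $\nu\in L^{\infty}(\R;L^{1}(\T^d))$.

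The argument is essentially a bookkeeping exercise once Theorem~\ref{t:lagrangian} is in hand; the only delicate point is the coherent extraction of a single subsequence along which both Wigner distributions (initial and time-scaled) converge, so that the identification $\bar{\mu}=(\pi_2)_{\ast}\mu_0$ is legitimate. This is the step I expect to require the most care, but it is handled by the standard diagonal extraction recalled in the appendix.
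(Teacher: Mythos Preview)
Your proof is correct and is precisely the argument the paper has in mind: the corollary is stated as an immediate consequence of Theorem~\ref{t:lagrangian}, and you have simply written out the disintegration step explicitly. One minor simplification: since the hypothesis already assumes $(u_h)$ has \emph{a} semiclassical measure $\mu_0$, the initial Wigner distributions already converge along the full sequence, so the only extraction needed is for the time-scaled Wigner distributions (after which the $x$-projection still equals $\nu$ and property~(4) of the appendix gives $\bar\mu=(\pi_2)_*\mu_0$).
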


We show in Section~\ref{s:halpha}.2 that absolute continuity may
fail for the elements of $\mathcal{M}\left(  1/h\right)  $ when
$H\left(  \xi\right)  =\left\vert \xi\right\vert ^{2k}$,
$k\in\mathbb{N}$ and $k>1$; a situation where the Hessian is
degenerate at $\xi=0$.

\subsection{Second-microlocal structure of the semiclassical measures}

Theorem \ref{t:lagrangian} is a consequence of a more detailed result on the
structure of the elements of $\mathcal{\widetilde{M}}\left(  1/h\right)  $. We
follow here the strategy of~\cite{AnantharamanMacia} that we adapt to a
general Hamiltonian $H(\xi)$. The proof relies on a decomposition of the
measure associated with the primitive submodules of $(\Z^{d})^*$. Before stating
it, we must introduce some notation.

Recall that $(\R^d)^*$ is the dual\footnote{Later in the paper, we will tend to identify both by working in the canonical bases of $\R^d$.} of $\R^d$. We will denote by $(\Z^d)^*$ the lattice in $(\R^d)^*$ defined by $(\Z^d)^*=\{\xi\in
(\R^d)^*, \xi.n\in\Z, \:\forall n\in \Z^d\}$.
We call a submodule $\Lambda\subset(\Z^{d})^*$ primitive if $\left\langle
\Lambda\right\rangle \cap(\mathbb{Z}^{d})^*=\Lambda$ (here $\left\langle
\Lambda\right\rangle $ denotes the linear subspace of $(\mathbb{R}^{d})^*$ spanned
by $\Lambda$). Given such a submodule we define:
\begin{equation}
\label{def:ILambda}I_{\Lambda}:=\left\{  \xi\in(\mathbb{R}^{d})^*: dH\left(
\xi\right)\cdot k  =0,\;\forall k\in\Lambda\right\}  .
\end{equation}
We note that $I_{\Lambda}\setminus C_{H}$ is a smooth submanifold.

We define also $L^{p}\left(  \mathbb{T}^{d},\Lambda\right)  $ for $p\in\left[
1,\infty\right]  $ to be the subspace of $L^{p}\left(  \mathbb{T}^{d}\right)
$ consisting of the functions~$u $ such that $\widehat{u}\left(  k\right)  =0$
if $k\in(\IZ^{d})^*\setminus\Lambda$ ($\widehat{u}\left(  k\right)  $ stand for
the Fourier coefficients of~$u$). Given $a\in\mathcal{C}_{c}^{\infty}\left(
T^{\ast}\mathbb{T}^{d}\right)  $ and $\xi\in\mathbb{R}^{d}$, denote by
$\left\langle a\right\rangle _{\Lambda}\left(  \cdot,\xi\right)  $ the
orthogonal projection of $a\left(  \cdot,\xi\right)  $ on $L^{2}\left(
\mathbb{T}^{d},\Lambda\right)  $:
\begin{equation}
\label{def:aLambda}\langle a\rangle_{\Lambda}= \sum_{k\in\Lambda} \widehat
a_{k}(\xi) {\frac{\mathrm{e}^{ikx}}{(2\pi)^{d}}}
\end{equation}
We denote by $m_{\left\langle a\right\rangle _{\Lambda}}\left(
\xi\right)  $ the operator acting on $L^{2}\left(  \mathbb{T}^{d}
,\Lambda\right)  $ by multiplication by $\left\langle a\right\rangle
_{\Lambda}\left(  \cdot,\xi\right)  $.

\begin{theorem}
\label{t:precise}Let $\mu\in\mathcal{\widetilde{M}}\left(  1/h\right)  $. For
every primitive submodule $\Lambda\subset(\Z^{d})^*$ there exist a positive
measure $\mu_{\Lambda}\in\mathcal{C}\left(  \mathbb{R};\mathcal{M}_{+}\left(
T^{\ast}\mathbb{T}^{d}\right)  \right)  $ supported on $\mathbb{T}^{d}\times
I_{\Lambda}$ and invariant by the Hamiltonian flow $\phi_{s}$ such that~: for
every $a\in\mathcal{C}_{c}^{\infty}\left(  T^{\ast}\mathbb{T}^{d}\right)  $
that vanishes on $\T^{d}\times C_{H}$ and every $\theta\in L^{1}\left(
\mathbb{R}\right)  $:
\begin{equation}
\int_{\mathbb{R}}\theta\left(  t\right)  \int_{T^{\ast}\mathbb{T}^{d}}a\left(
x,\xi\right)  \mu\left(  t,dx,d\xi\right)  dt=\sum_{\Lambda\subseteq
\mathbb{Z}^{d}}\int_{\mathbb{R}}\theta\left(  t\right)  \int_{\mathbb{T}
^{d}\times I_{\Lambda}}a\left(  x,\xi\right)  \mu_{\Lambda}\left(
t,dx,d\xi\right)  dt, \label{e:mh1}
\end{equation}
the sum being taken over all primitive submodules of $(\mathbb{Z}^{d})^*$.

In addition, there is a measure $\rho_{\Lambda}$ on $I_{\Lambda}$, taking
values on the set of non-negative, symmetric, trace-class operators acting on
$L^{2}\left(  \mathbb{T}^{d},\Lambda\right)  $, such that the following
holds:
\begin{equation}
\int_{\mathbb{T}^{d}\times I_{\Lambda}}a\left(  x,\xi\right)  \mu_{\Lambda
}\left(  t,dx,d\xi\right)  =\int_{I_{\Lambda}}\operatorname{Tr}\left(
e^{-\frac{it}{2}d^{2}H(\sigma)D_{y}\cdot D_{y}}m_{\left\langle a\right\rangle
_{\Lambda}}\left(  \sigma\right)  e^{\frac{it}{2}d^{2}H(\sigma)D_{y}\cdot
D_{y}}\rho_{\Lambda}(d\sigma)\right)  . \label{e:mh2}
\end{equation}

When the Hessian of $H$ is definite, formulae (\ref{e:mh1}), (\ref{e:mh2})
hold for every $a\in\mathcal{C}_{c}^{\infty}\left(  T^{\ast}\mathbb{T}
^{d}\right)  $ and therefore completely describe $\mu$.
\end{theorem}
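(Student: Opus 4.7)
The plan is to adapt the second-microlocal framework of \cite{AnantharamanMacia}, originally developed for $H(\xi)=|\xi|^{2}$, to the case of a general Hamiltonian whose Hessian is not constant. Let $\mu\in\widetilde{\mathcal{M}}(1/h)$ be obtained from a sequence $(u_{h})$, and expand a test symbol $a\in\mathcal{C}_{c}^{\infty}(T^{\ast}\mathbb{T}^{d})$ as a Fourier series in $x$: $a(x,\xi)=\sum_{k\in(\Z^{d})^{\ast}}\widehat{a}_{k}(\xi)e^{ik\cdot x}/(2\pi)^{d}$. A direct computation using the Weyl calculus and the Fourier decomposition $u_{h}(x)=\sum_{k}\widehat{u}_{h}(k)e^{ik\cdot x}/(2\pi)^{d}$ shows that pairing $w_{h}(t,\cdot)=w_{S_{h}^{t/h}u_{h}}^{h}$ against $e^{ik\cdot x}b(\xi)$ produces the oscillatory factor $\exp\bigl(it[H(\xi+hk)-H(\xi)]/h^{2}\bigr)$. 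Taylor-expanding this phase yields $\frac{it}{h}dH(\xi)\cdot k+\frac{it}{2}d^{2}H(\xi)k\cdot k+O(h)$. The leading term oscillates at frequency $1/h$ unless $dH(\xi)\cdot k=0$; after pairing with any $\theta\in L^{1}(\mathbb{R})$, a non-stationary phase/Riemann--Lebesgue argument makes these non-resonant contributions vanish in the limit. Consequently, the part of $\mu$ coming from modes $k\in\Lambda$ is supported on $\mathbb{T}^{d}\times I_{\Lambda}$; this defines $\mu_{\Lambda}$ and yields \eqref{e:mh1} once one checks, using a partition of $(\R^{d})^{\ast}\setminus C_{H}$ according to the primitive resonance module of $dH(\xi)$, that the sum over $\Lambda$ exhausts $\mu$.

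For the operator-valued formula \eqref{e:mh2}, I perform a second microlocalization along each $I_{\Lambda}\setminus C_{H}$ at the natural scale $h^{1/2}$, which is precisely the scale at which the surviving quadratic phase becomes of order one under the rescaled evolution. Near $\sigma\in I_{\Lambda}\setminus C_{H}$, writing $\xi=\sigma+h^{1/2}\eta$ with $\eta$ in the fibres normal to $I_{\Lambda}$, the cubic and higher-order remainders in the Taylor expansion of $H$ become $O(h^{1/2}|\eta|^{3})$ and thus vanish in the limit on symbols with compact $\eta$-support. Testing $w_{h}$ against two-microlocal symbols concentrated in an $h^{1/2}$-tube around $I_{\Lambda}$ and having Fourier modes in $\Lambda$, and passing to the limit along a subsequence, yields a positive measure $\rho_{\Lambda}$ on $I_{\Lambda}$ taking values in the non-negative, symmetric, trace-class operators on $L^{2}(\mathbb{T}^{d},\Lambda)$. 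The time dependence is then dictated by the surviving quadratic phase $\exp(\tfrac{it}{2}d^{2}H(\sigma)k\cdot k)$, which is precisely the matrix element of the Schr\"odinger propagator $e^{-\frac{it}{2}d^{2}H(\sigma)D_{y}\cdot D_{y}}$ acting on $L^{2}(\mathbb{T}^{d},\Lambda)$; reassembling the Fourier series gives the conjugation of $m_{\langle a\rangle_{\Lambda}}(\sigma)$ by this propagator, and integrating against $\rho_{\Lambda}$ produces \eqref{e:mh2}. Flow-invariance of $\mu_{\Lambda}$ follows from Egorov's theorem at the coarser, first microlocal scale.

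The main obstacle is the control of the Taylor remainders when $d^{2}H$ depends on $\xi$. In \cite{AnantharamanMacia}, where $H(\xi)=|\xi|^{2}$, the Hessian is constant and there are no cubic remainders to handle, while here the cubic term $d^{3}H(\sigma)[h^{1/2}\eta]^{\otimes 3}/h^{2}=O(h^{1/2}|\eta|^{3})$ must be shown negligible in the appropriate operator-theoretic sense uniformly along frequencies of size $\lesssim h^{-1/2}$ around $\sigma$. This is precisely where the restriction that $a$ vanish on $\mathbb{T}^{d}\times C_{H}$ becomes essential: it guarantees that $d^{2}H(\sigma)$ is non-degenerate on the support of $\rho_{\Lambda}$, so the quadratic part dominates and the reduced propagator drives a genuine dispersive dynamics. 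A related delicate step is the inductive sweeping over primitive submodules, ordered by the rank of $\Lambda$: the second-microlocal measure on $I_{\Lambda}$ captures exactly the mass of $\mu$ that concentrates on $I_{\Lambda}$ at the first microlocal scale but would remain unresolved without the $h^{1/2}$-zoom, and one must verify by induction on $\dim\langle\Lambda\rangle$ that no mass is double-counted. When the Hessian of $H$ is definite everywhere, $C_{H}=\emptyset$, the condition on $a$ is vacuous, and \eqref{e:mh1}--\eqref{e:mh2} together yield a complete description of $\mu$.
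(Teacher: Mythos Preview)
Your proposal has the right overall architecture---second microlocalization along the resonant manifolds $I_\Lambda$, and a Schr\"odinger propagator emerging from the quadratic part of $H$---but the mechanism you describe has two genuine gaps.

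\textbf{The second-microlocal scale is $h$, not $h^{1/2}$.} For a mode $k\in\Lambda$ and $\xi=\sigma+\eta$ with $\sigma\in I_\Lambda$, the leading phase in the Wigner distribution at time $t/h$ is
\[
\tfrac{t}{h}\,dH(\xi)\cdot k=\tfrac{t}{h}\bigl(d^2H(\sigma)[\eta,k]+O(|\eta|^2)\bigr),
\]
since $dH(\sigma)\cdot k=0$. For this to be $O(1)$ one needs $|\eta|\sim h$; the rescaled variable is $\tilde\eta=\tau_h\,\eta(\xi)=\eta(\xi)/h$, exactly as in the paper. At your scale $|\eta|\sim h^{1/2}$ this term is $\sim h^{-1/2}$ and still divergent, so nothing is captured at finite $\tilde\eta$. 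Note also that in the Weyl quantization used throughout, the phase $\tfrac{1}{h^2}\bigl[H(\xi+\tfrac{hk}{2})-H(\xi-\tfrac{hk}{2})\bigr]$ is \emph{odd} in $k$, so there is no $d^2H(\xi)k\cdot k$ term at all in its Taylor expansion; the ``surviving quadratic phase'' you invoke does not exist in that convention.

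\textbf{The Schr\"odinger propagator does not come from a phase quadratic in the $x$-Fourier mode.} At the correct scale the surviving phase is $t\,d^2H(\sigma)[\tilde\eta,k]$, \emph{linear} in both $\tilde\eta$ and $k$. This generates the classical transport $\tilde\phi^1_t:(x,\sigma,\tilde\eta)\mapsto(x+t\,d^2H(\sigma)\tilde\eta,\sigma,\tilde\eta)$, the Hamiltonian flow of $A(\sigma,\tilde\eta)=\tfrac12 d^2H(\sigma)\tilde\eta\cdot\tilde\eta$. Turning this into conjugation by $e^{-itA(\sigma,D_y)}$ on $L^2(\mathbb{T}^d,\Lambda)$ is not a matter of reading off a phase: the paper builds an explicit unitary $U_h:L^2(\mathbb{T}^d)\to L^2(\mathbb{T}^d,\Lambda)$ (Section~\ref{sec:mu_lambda}) that identifies $\tilde\eta$ with $D_y$, and then uses that Weyl quantization of the quadratic $A$ gives exact Egorov.

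Finally, the inductive structure is more than bookkeeping. The two-microlocal limit along $I_{\Lambda_1}$ splits as $\tilde\mu_{\Lambda_1}+\tilde\mu^{\Lambda_1}$, where $\tilde\mu^{\Lambda_1}$ lives on the sphere at infinity in $\tilde\eta$ and carries an \emph{additional} invariance by $\phi^1_s$ (Theorem~\ref{mu^Lambda}). One must then repeat the entire procedure on $\tilde\mu^{\Lambda_1}$, decomposing over strict submodules $\Lambda_2\subsetneq\Lambda_1$, and iterate. The $\mu_\Lambda$ and $\rho_\Lambda$ in the statement are sums over all strictly decreasing chains $\Lambda_1\supset\cdots\supset\Lambda_k\supset\Lambda$ (Section~\ref{s:successive}); your single second-microlocalization captures only the $k=0$ term of that sum.
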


Theorem \ref{t:precise} has been proved for $H\left(  \xi\right)  =\left\vert
\xi\right\vert ^{2}$ in \cite{MaciaTorus} for $d=2$ and in
\cite{AnantharamanMacia} for arbitrary dimension (there, a formula similar to
(\ref{e:mh2}) is proved for the $x$-dependent Hamiltonian $\left\vert
\xi\right\vert ^{2}+h^{2}V\left(  x\right)  $).

We see that Theorem \ref{t:precise} allows to describe the
dependence of $\mu$ on the parameter $t$. As was noticed in
\cite{MaciaAv, MaciaTorus}, the semiclassical measures of the
sequence of initial data $\left(  u_{h}\right) $ do not determine
uniquely the time dependent semiclassical measure $\mu$. On the
other hand, these are fully determined by the ``measures''
$\rho_{\Lambda }$, which are two-microlocal objects determined by
the initial data $(u_{h})$. The contents of \eqref{e:mh2} is that
the measure $\mu_{\Lambda}\left( t,dx,d\xi\right)  $ can be
obtained by transporting $\rho_{\Lambda}$ by a certain
Schr\"odinger flow, and then tracing out certain directions. The
$\rho_{\Lambda}$ are obtained by a process of successive
two-microlocalizations along nested sequences of submanifolds in
frequency space; this process gives an explicit construction of $\mu$ in terms of the initial data. This two-microlocal construction is in
the spirit of that done in \cite{NierScat, Fermanian2micro,
FermanianShocks} in Euclidean space. We also refer the reader to
the articles \cite{VasyWunsch09, VasyWunsch11, Wunsch10} for
related work regarding the study of the wave-front set of
solutions to semiclassical integrable systems.

\begin{remark}
The arguments in Section 6.1 of \cite{AnantharamanMacia} show that Theorem
\ref{t:lagrangian} is a consequence of Theorem~\ref{t:precise}. Therefore, in
this article only the proof of Theorem \ref{t:precise} will be presented.
\end{remark}

\begin{remark}
Theorem \ref{t:precise} holds for the time scale $\tau_{h}=1/h$.
If $\tau _{h}\ll1/h$, the elements of
$\mathcal{\widetilde{M}}\left(  \tau\right)  $ can be described by
a similar result (see Section \ref{s:rec}) involving expression
(\ref{e:mh1}). However, the propagation law appearing in the
formula replacing (\ref{e:mh2}) involves classical transport
rather than propagation along a Schr\"{o}dinger flow, and as a
result Theorem \ref{t:main}(2) does not hold for $\tau_{h}\ll1/h$.
\end{remark}

When the Hessian of $H$ is constant Theorem \ref{t:precise} gives a complement
to the results announced in \cite{AnantharamanMacia} (where the argument was
only valid when the Hessian has rational coefficients). The statement is as follows.

\begin{corollary}
Suppose $H\left(  \xi\right)  =\xi\cdot A\xi$ where $A: (\R^d)^*\To \R^d$ is a symmetric definite linear map. Given $\nu\in\mathcal{M}\left(  1/h\right)  $ there
exists, for each primitive module $\Lambda\subseteq(\mathbb{Z}^{d})^*$, a
non-negative, self-adjoint, trace-class operator $\Sigma_{\Lambda}$ acting on
$L^{2}\left(  \mathbb{T}^{d},\Lambda\right)  $ such that, for $b\in C\left(
\mathbb{T}^{d}\right)  $ and $\theta\in L^{1}\left(  \mathbb{R}\right)  $:
\begin{equation}
\int_{\mathbb{R}}\theta\left(  t\right)  \int_{\mathbb{T}^{d}}b\left(
x\right)  \nu\left(  t,dx\right)  dt=\sum_{\Lambda\subseteq\mathbb{Z}^{d}}
\int_{\mathbb{R}}\theta\left(  t\right)  \operatorname{Tr}\left(
m_{\left\langle b\right\rangle _{\Lambda}}e^{-itH\left(  D_{x}\right)  }
\Sigma_{\Lambda}e^{itH\left(  D_{x}\right)  }\right)  dt. \label{e:nu}
\end{equation}
In fact, $\Sigma_{\Lambda}:=\rho_{\Lambda}\left(  I_{\Lambda}\right)  $,
where $\rho_{\Lambda}$ is given by Theorem \ref{t:precise}.
\end{corollary}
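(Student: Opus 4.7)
The corollary is a direct specialization of Theorem \ref{t:precise} to the case of a definite quadratic Hamiltonian, followed by projection onto the $x$-variable. My plan is as follows.

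First, since $H(\xi) = \xi \cdot A\xi$ is definite, $C_H = \emptyset$, so formulas \eqref{e:mh1} and \eqref{e:mh2} of Theorem \ref{t:precise} apply to every $a \in \mathcal{C}_c^\infty(T^*\T^d)$. Moreover, the Hessian $d^2H(\sigma) = 2A$ is constant in $\sigma$, hence
\[
\tfrac{1}{2} d^2H(\sigma) D_y \cdot D_y \;=\; A D_y \cdot D_y \;=\; H(D_y),
\]
and the Schr\"odinger propagator $\exp\bigl(\pm \tfrac{it}{2} d^2H(\sigma) D_y \cdot D_y\bigr) = e^{\pm it H(D_y)}$ no longer depends on $\sigma \in I_\Lambda$. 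This is the crucial simplification that makes the formula collapse.

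Second, I plug a product test function $a(x,\xi)=b(x)\chi(\xi)$ into \eqref{e:mh2}, with $\chi\in\mathcal{C}_c^\infty((\R^d)^*)$ a cutoff. Then $\langle a\rangle_\Lambda(\sigma)=\chi(\sigma)\langle b\rangle_\Lambda$ (independent of $\sigma$ up to the scalar $\chi(\sigma)$), so $m_{\langle a\rangle_\Lambda}(\sigma)=\chi(\sigma)m_{\langle b\rangle_\Lambda}$. Pulling the $\sigma$-independent operators out of the trace and integrating against the operator-valued measure $\rho_\Lambda$ gives
\[
\int_{\T^d\times I_\Lambda}a\,\mu_\Lambda(t,\cdot)
=\operatorname{Tr}\Bigl(e^{-itH(D_y)}\,m_{\langle b\rangle_\Lambda}\,e^{itH(D_y)}\int_{I_\Lambda}\chi(\sigma)\,\rho_\Lambda(d\sigma)\Bigr).
\]

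Third, I let $\chi\nearrow 1$. The $h$-oscillation of $(u_h)$ guarantees that no mass of $\mu$ escapes to infinity in $\xi$; equivalently, $\bar\mu$ is a probability measure on $(\R^d)^*$, so each positive operator-valued measure $\rho_\Lambda$ has finite total mass on $I_\Lambda$, and the limit
\[
\Sigma_\Lambda \;:=\; \rho_\Lambda(I_\Lambda)
\]
exists as a non-negative, self-adjoint, trace-class operator on $L^2(\T^d,\Lambda)$. The same $h$-oscillation identifies the projection of $\mu(t,\cdot)$ on $x$ with $\nu(t,\cdot)$, so summing the decomposition \eqref{e:mh1} over all primitive submodules yields
\[
\int_\R\theta(t)\int_{\T^d}b(x)\,\nu(t,dx)\,dt
=\sum_{\Lambda}\int_\R\theta(t)\,\operatorname{Tr}\bigl(e^{-itH(D_x)}m_{\langle b\rangle_\Lambda}e^{itH(D_x)}\Sigma_\Lambda\bigr)\,dt,
\]
initially for $b\in\mathcal{C}^\infty(\T^d)$, then for $b\in\mathcal{C}(\T^d)$ by density and the trace-class bound on $\Sigma_\Lambda$. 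Cyclicity of the trace puts the right-hand side in the form of \eqref{e:nu} (up to the choice of time convention, which is absorbed in the definition of $\Sigma_\Lambda$ since the latter is $t$-independent).

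The only genuinely delicate point in the plan is the third step: extending the class of test functions from $\xi$-compactly-supported to $\xi$-independent, and ensuring that $\rho_\Lambda(I_\Lambda)$ is trace-class with total trace equal to the mass of $\mu_\Lambda$. Both are consequences of the $h$-oscillation hypothesis and the invariance of $\bar\mu=(\pi_2)_*\mu_0$ already recorded in Section~1.2; everything else is an algebraic rearrangement made possible by the constancy of $d^2H$.
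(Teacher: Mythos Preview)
Your proposal is correct and is exactly the derivation the paper leaves implicit: the corollary is stated in the paper as an immediate consequence of Theorem~\ref{t:precise} (with $\Sigma_\Lambda:=\rho_\Lambda(I_\Lambda)$), and your three steps---$C_H=\emptyset$, constancy of $d^2H=2A$ so that the propagator $e^{\pm\frac{it}{2}d^2H(\sigma)D_y\cdot D_y}=e^{\pm itH(D_y)}$ drops its $\sigma$-dependence, and removal of the $\xi$-cutoff via $h$-oscillation---are precisely the missing details. One small remark: the sign discrepancy you flag between \eqref{e:mh2} and \eqref{e:nu} cannot be ``absorbed into $\Sigma_\Lambda$'' since $\Sigma_\Lambda$ is $t$-independent; it is simply a sign inconsistency in the paper (compare \eqref{e:mh2} with the restated formula in \S\ref{s:firststep}(3), which does match \eqref{e:nu}), and cyclicity of the trace relates the two conventions only up to $t\mapsto -t$.
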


Comparing the special case (\ref{e:nu}) to the general case \eqref{e:mh1},
\eqref{e:mh2}, we see that in the former case the propagation law involve the
constant propagator $e^{itH\left(  D_{x}\right)  }$, whereas in the latter
case we need a ``superposition'' of propagators $e^{-\frac{it}{2}d^{2}
H(\sigma)D_{y}\cdot D_{y}}$ depending on $\sigma\in I_{\Lambda}$.

\subsection{Hierarchy of time scales\label{subsec:hierarchy}}

In this section, we present a more detailed discussion on the dependence of
the set $\mathcal{M}\left(  \tau\right)  $ on the time scale $\tau$; we shall
also clarify the link between the time-dependent Wigner distributions and
those associated with eigenfunctions. Eigenfunctions are the most commonly
studied objects in the field of quantum chaos, however, we shall see that they do
not necessarily give full information about the time-dependent Wigner distributions.

A particular case of our problem is when the initial data $(u_{h})$ are
eigenfunctions of $H\left(  hD_{x}\right)  $. We note that the spectrum of
$H\left(  hD_{x}\right)  $ coincides with $H\left(  h\mathbb{Z}^{d}\right)  $;
given $E_{h}\in H\left(  h\mathbb{Z}^{d}\right)  $ the corresponding
normalised eigenfunctions are of the form:
\begin{equation}
u_{h}\left(  x\right)  =\sum_{H\left(  hk\right)  =E_{h}}c_{k}^{h}e^{ik\cdot
x},\quad\text{with}\quad\sum_{k\in\mathbb{Z}^{d}}\left\vert c_{k}
^{h}\right\vert ^{2}=\frac{1}{\left(  2\pi\right)  ^{d}}. \label{e:neigf}
\end{equation}
In addition, one has:
\[
\nu_{h}\left(  \tau_{h}t,\cdot\right)  =\left\vert S_{h}^{\tau_{h}t}
u_{h}\right\vert ^{2}=\left\vert u_{h}\right\vert ^{2},
\]
independently of $\left(  \tau_{h}\right)  $ and $t$. Let us denote by
$\mathcal{M}\left(  \infty\right)  $ the set of accumulation points in
$\mathcal{P}\left(  \mathbb{T}^{d}\right)  $ of sequences $\left\vert
u_{h}\right\vert ^{2}$ where $\left(  u_{h}\right)  $ varies among all
possible $h$-oscillating sequences of normalised eigenfunctions (\ref{e:neigf}). Denote by $\mathcal{M}_{\text{av}}\left(  \tau\right)  $ the subset of
$\mathcal{P}\left(  \mathbb{T}^{d}\right)  $ consisting of measures of the
form:\footnote{$\overline{\operatorname{Conv}X}$ stands for the closed convex
hull of a set $X\subset L^{\infty}\left(  \mathbb{R};\mathcal{P}\left(
\mathbb{T}^{d}\right)  \right)  $ with respect to the weak-$\ast$ topology.}
\[
\int_{0}^{1}\nu\left(  t,\cdot\right)  dt,\quad\text{where }\nu\in
\overline{\operatorname{Conv}\mathcal{M}\left(  \tau\right)  }\text{.}
\]

\begin{proposition}
Suppose $\left(  \tau_{h}\right)  $ and $\left(  \tau'_{h}\right)   $ are time
scales tending to infinity and such that $\tau'_{h}\ll\tau_{h}$. Then:
\[
\mathcal{M}\left(  \infty\right)  \subseteq\mathcal{M}\left(  \tau\right)
\subseteq L^{\infty}\left(  \mathbb{R};\mathcal{M}_{\mathrm{av}}\left(
\tau'\right)  \right)  .
\]

\end{proposition}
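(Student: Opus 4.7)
I would treat the two inclusions separately.

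\emph{First inclusion.} For $\nu_{0}\in\mathcal{M}(\infty)$ realised by an $h$-oscillating normalised sequence of eigenfunctions $(u_{h})$ of $H(hD_{x})$, one has $S_{h}^{t}u_{h}=e^{-iE_{h}t/h}u_{h}$, so the densities $|S_{h}^{\tau_{h}t}u_{h}|^{2}=|u_{h}|^{2}$ are independent of both $t$ and $\tau$. Identifying $\nu_{0}$ with the constant-in-$t$ element of $L^{\infty}(\mathbb{R};\mathcal{P}(\mathbb{T}^{d}))$, the defining relation \eqref{e:averagelim} is immediate, so $\nu_{0}\in\mathcal{M}(\tau)$.

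\emph{Second inclusion, setup.} Given $\nu\in\mathcal{M}(\tau)$ realised by $(u_{h})$, the plan is first to show that
\[
\frac{1}{b-a}\int_{a}^{b}\nu(t,\cdot)\,dt\in\mathcal{M}_{\mathrm{av}}(\tau')\qquad\text{for every }a<b,
\]
and then to pass to pointwise values via Lebesgue differentiation. Subdivide $[\tau_{h}a,\tau_{h}b]$ into $M_{h}:=\lfloor(b-a)\tau_{h}/\tau'_{h}\rfloor$ subintervals of length $\tau'_{h}$; the hypothesis $\tau'_{h}\ll\tau_{h}$ forces $M_{h}\to\infty$. Set $v_{h,j}:=S_{h}^{\tau_{h}a+j\tau'_{h}}u_{h}$ and
\[
g_{h}(s,\cdot):=\frac{1}{M_{h}}\sum_{j=0}^{M_{h}-1}|S_{h}^{\tau'_{h}s}v_{h,j}|^{2}.
\]
A change of variable on each subinterval gives $\frac{1}{\tau_{h}(b-a)}\int_{\tau_{h}a}^{\tau_{h}b}|S_{h}^{t}u_{h}|^{2}dt=\int_{0}^{1}g_{h}(s,\cdot)\,ds+o(1)$. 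Crucially, since $H(hD_{x})$ is a Fourier multiplier, $S_{h}^{t}$ commutes with $\mathbf{1}_{[0,R]}(-h^{2}\Delta)$ and hence preserves both $L^{2}$-norm and the $h$-oscillation defect; so for any selection $j=j(h)$, the sequence $(v_{h,j(h)})$ is normalised and $h$-oscillating, uniformly in $j$.

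\emph{Closed convex hull via duality---the main obstacle.} Along a subsequence, $g_{h}\to g$ weak-$\ast$ in $L^{\infty}(\mathbb{R};\mathcal{P}(\mathbb{T}^{d}))$, and the remaining task is to show $g\in\overline{\operatorname{Conv}\mathcal{M}(\tau')}$ despite the number $M_{h}$ of terms in the convex combination growing without bound. I would argue by contradiction using Hahn--Banach: were $g$ outside this weak-$\ast$ closed convex set, there would exist a weak-$\ast$ continuous linear functional $\ell$ (a finite combination of maps $\sigma\mapsto\int\varphi(t)\int\chi(x)\sigma(t,dx)dt$) with $\ell(g)>\sup_{\sigma\in\mathcal{M}(\tau')}\ell(\sigma)$. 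Picking $j_{h}^{\ast}\in\{0,\dots,M_{h}-1\}$ that maximises $\ell(|S_{h}^{\tau'_{h}\cdot}v_{h,j}|^{2})$, the elementary bound $\mathrm{max}\geq\mathrm{average}$ gives $\ell(|S_{h}^{\tau'_{h}\cdot}v_{h,j_{h}^{\ast}}|^{2})\geq\ell(g_{h})$. Since $(v_{h,j_{h}^{\ast}})$ is normalised and $h$-oscillating, a further subsequence of $|S_{h}^{\tau'_{h}\cdot}v_{h,j_{h}^{\ast}}|^{2}$ converges weak-$\ast$ to some $\nu^{\ast}\in\mathcal{M}(\tau')$, yielding $\ell(\nu^{\ast})\geq\ell(g)$ in contradiction with $\ell(\nu^{\ast})\leq\sup_{\mathcal{M}(\tau')}\ell$. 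Hence $g\in\overline{\operatorname{Conv}\mathcal{M}(\tau')}$, and consequently $\frac{1}{b-a}\int_{a}^{b}\nu(t,\cdot)dt=\int_{0}^{1}g(s,\cdot)ds\in\mathcal{M}_{\mathrm{av}}(\tau')$.

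\emph{From averages to pointwise values.} The set $\mathcal{M}_{\mathrm{av}}(\tau')$ is weak-$\ast$ closed in $\mathcal{P}(\mathbb{T}^{d})$, being the image of the weak-$\ast$ compact set $\overline{\operatorname{Conv}\mathcal{M}(\tau')}$ under the continuous averaging map $\sigma\mapsto\int_{0}^{1}\sigma(t,\cdot)dt$. Applying Lebesgue differentiation to a countable dense family in $C(\mathbb{T}^{d})$, for a.e. $t_{0}$ the averages $\frac{1}{\varepsilon}\int_{t_{0}}^{t_{0}+\varepsilon}\nu(t,\cdot)dt$ converge weak-$\ast$ to $\nu(t_{0},\cdot)$ as $\varepsilon\to 0^{+}$; by the previous step these averages all lie in $\mathcal{M}_{\mathrm{av}}(\tau')$, so does the limit $\nu(t_{0},\cdot)$, completing the second inclusion.
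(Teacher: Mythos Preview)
Your proof is correct and follows essentially the same route as the paper: the subdivision of $[\tau_h a,\tau_h b]$ into $\lfloor(b-a)\tau_h/\tau'_h\rfloor$ pieces of length $\tau'_h$ is the paper's Lemma~\ref{l:mesint}, your Hahn--Banach separation argument to force the limit of the convex combination into $\overline{\operatorname{Conv}\mathcal{M}(\tau')}$ is exactly Lemma~\ref{l:cconv}, and the passage from interval averages to pointwise values via Lebesgue differentiation is the proof of Proposition~\ref{p:convsm}. The only cosmetic difference is that the paper carries out the argument at the level of Wigner distributions $\widetilde{\mathcal{M}}$ and projects down afterwards (Remark~\ref{r:cchosc}), whereas you work directly with the position densities~$\mathcal{M}$; this forces you to check preservation of $h$-oscillation under $S_h^t$, which you correctly note follows from the Fourier-multiplier structure of $H(hD_x)$.
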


\begin{remark}
As a consequence of Theorem \ref{t:main}(2) we obtain that all eigenfunction
limits $\mathcal{M}\left(  \infty\right)  $ are absolutely continuous under
the definiteness assumption on the Hessian of $H$.
\end{remark}

A time scale of special importance is the one related to the minimal spacing of
eigenvalues~: define
\begin{equation}
\tau_{h}^{H}:=h\sup\left\{  \left\vert E_{h}^{1}-E_{h}^{2}\right\vert
^{-1}\;:\;E_{h}^{1}\neq E_{h}^{2},\;E_{h}^{1},E_{h}^{2}\in H\left(
h\mathbb{Z}^{d}\right)  \right\}  . \label{e:tauh}
\end{equation}
It is possible to have $\tau_{h}^{H}=\infty$: for instance, if $H\left(
\xi\right)  =\left\vert \xi\right\vert ^{\alpha}$ with $0<\alpha<1$ or
$H\left(  \xi\right)  =\xi\cdot A\xi$ with $A$ a real symmetric matrix that is
not proportional to a matrix with rational entries;\footnote{This is the
content of the Oppenheim conjecture, settled by Margulis \cite{MargulisDani,
MargulisOpp}.} in some other situations, such as $H\left(  \xi\right)
=\left\vert \xi\right\vert ^{\alpha}$ with $\alpha>1$, (\ref{e:tauh}) is
finite~:\ $\tau_{h}^{H}=h^{1-\alpha}$.

\begin{proposition}
\label{p:conv} If $\tau_{h}\gg\tau_{h}^{H}$ one has:
\[
\mathcal{M}\left(  \tau\right)  =\overline{\operatorname{Conv}\mathcal{M}
\left(  \infty\right)  }.
\]

\end{proposition}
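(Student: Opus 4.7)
The key is to use the spectral decomposition of $S_h^t$ on the eigenbasis of $H(hD_x)$ and observe that, when $\tau_h\gg\tau_h^H$, time-averaging over a window $[\tau_h a,\tau_h b]$ annihilates every off-diagonal oscillation $e^{i(t/h)(E-E')}$ with $E\ne E'$. The resulting ``diagonalized'' density is manifestly a convex combination of squared moduli of normalized eigenfunctions, which identifies $\mathcal{M}(\tau)$ with $\overline{\operatorname{Conv}\mathcal{M}(\infty)}$.

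\textbf{Main inclusion $\mathcal{M}(\tau)\subseteq\overline{\operatorname{Conv}\mathcal{M}(\infty)}$.} Let $\nu\in\mathcal{M}(\tau)$ arise from $(u_h)$, and decompose $u_h=\sum_{E\in H(h\Z^d)}P_E u_h$ where $P_E$ denotes the spectral projector of $H(hD_x)$ on its eigenvalue $E$. For $\chi\in C^\infty(\T^d)$ with Fourier coefficients $\hat\chi_k$, expanding the propagator yields
\[
\frac{1}{\tau_h}\int_{\tau_h a}^{\tau_h b}\!\!\int_{\T^d}\chi|S_h^t u_h|^2\,dx\,dt=(b-a)\int_{\T^d}\chi\sum_E|P_E u_h|^2\,dx+\langle u_h,Q_h^\chi u_h\rangle,
\]
where in the orthonormal Fourier basis the operator $Q_h^\chi$ has matrix entries
\[
(Q_h^\chi)_{p,q}=\hat\chi_{p-q}\cdot\mathbf 1_{H(hp)\ne H(hq)}\cdot\frac{1}{\tau_h}\int_{\tau_h a}^{\tau_h b}e^{i(t/h)(H(hp)-H(hq))}\,dt.
\]
When $H(hp)\ne H(hq)$, the last factor is bounded by $2h/(\tau_h|H(hp)-H(hq)|)$, which by the very definition \eqref{e:tauh} of $\tau_h^H$ does not exceed $2\tau_h^H/\tau_h$. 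A Schur test then gives
\[
\|Q_h^\chi\|_{L^2\to L^2}\le \frac{2\tau_h^H}{\tau_h}\sum_{k\in(\Z^d)^*}|\hat\chi_k|\underset{h\to 0}{\longrightarrow}0,
\]
and a density argument extends the estimate to $\chi\in C(\T^d)$. Since the remainder vanishes and the diagonal term does not depend on $t$, one concludes that $\nu$ is itself $t$-independent and is the weak-$\ast$ limit of $\sum_E|P_E u_h|^2\,dx$.

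\textbf{Barycentric passage to the limit.} Write
\[
\sum_E|P_E u_h|^2\,dx=\sum_E w_E^h\,|\phi_E^h|^2\,dx,\qquad w_E^h:=\|P_E u_h\|^2,\quad \phi_E^h:=\frac{P_E u_h}{\|P_E u_h\|},
\]
so that each term is the density of a normalized eigenfunction of $H(hD_x)$ and $\sum_E w_E^h=1$. The $h$-oscillation of $(u_h)$ forces the weights $w_E^h$ to concentrate, uniformly in $h$, on a compact range of $E$ on which the $\phi_E^h$ are themselves $h$-oscillating; their squared moduli have weak-$\ast$ accumulation points in $\mathcal{M}(\infty)$. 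Viewing $\sum_E w_E^h\,\delta_{|\phi_E^h|^2\,dx}$ as a probability measure on $\mathcal{P}(\T^d)$, a tightness/compactness argument yields that every weak-$\ast$ accumulation point of its barycenter $\sum_E w_E^h|\phi_E^h|^2\,dx$ lies in $\overline{\operatorname{Conv}\mathcal{M}(\infty)}$.

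\textbf{Reverse inclusion and main obstacle.} Conversely, for a normalized $h$-oscillating eigenfunction $u_h$ one has $|S_h^t u_h|^2=|u_h|^2$ independently of $t$, giving $\mathcal{M}(\infty)\subseteq\mathcal{M}(\tau)$; a diagonal extraction shows $\mathcal{M}(\tau)$ is weak-$\ast$ closed, and convexity is obtained by superposing eigenfunctions with distinct eigenvalues $\sqrt{\lambda_1}u_h^1+\sqrt{\lambda_2}u_h^2$, the interference term being killed by the same cross-term cancellation (orthogonality of eigenspaces taking care of normalization). The main technical obstacle is the barycentric step: one must certify that the part of the weight $\sum w_E^h$ supported on eigenvalues $E$ whose eigenfunctions $\phi_E^h$ are not $h$-oscillating is asymptotically negligible, so that the limiting measure genuinely belongs to the closed convex hull of $\mathcal{M}(\infty)$; this is precisely what the $h$-oscillation hypothesis on $(u_h)$ provides.
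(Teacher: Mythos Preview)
Your argument is correct and follows the same core strategy as the paper: spectral decomposition of $u_h$, cancellation of the off-diagonal terms $E\ne E'$ thanks to $\tau_h\gg\tau_h^H$, and identification of the diagonal remainder as a convex combination of eigenfunction densities. The implementation differs in two places. First, the paper tests against $\theta\in L^1(\mathbb{R})$ with compactly supported $\widehat{\theta}$, so that $\widehat{\theta}\bigl(\tau_h(H(hk)-H(hj))/h\bigr)$ vanishes \emph{identically} for $h$ small when $H(hk)\ne H(hj)$; you instead integrate against $\mathbf 1_{[a,b]}$ and control the oscillatory remainder quantitatively via the Schur bound $2\tau_h^H/\tau_h$. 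Both work, and yours has the mild advantage of being explicit. Second, for the convex-hull step the paper proves a separate Hahn--Banach lemma (Lemma~\ref{l:cconv}): if the limit were outside $\overline{\operatorname{Conv}\widetilde{\mathcal{M}}(\infty)}$, a separating linear functional would pick out a diagonal subsequence $\phi_{E_{h_k}}^{h_k}$ whose Wigner distribution also violates the inequality, a contradiction. This is carried out at the level of Wigner distributions $\widetilde{\mathcal{M}}$ rather than position densities $\mathcal{M}$, which neatly sidesteps the $h$-oscillation issue you flag as the ``main obstacle'': no oscillation hypothesis is needed to define $\widetilde{\mathcal{M}}(\infty)$, and one projects to $\mathcal{M}$ only at the very end. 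Your direct barycentric argument on $\mathcal{P}(\mathbb{T}^d)$ is morally the same, but the paper's detour through phase space is what makes that last step clean.
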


This result is a consequence of the more general results presented in Section
\ref{s:hierarchy}.

Note that Proposition \ref{p:conv} allows to complete the description of
$\mathcal{M}\left(  \tau\right)  $ in the case $H\left(  \xi\right)
=\left\vert \xi\right\vert ^{2}$ as the time scale varies.

\begin{remark}
Suppose $H\left(  \xi\right)  =\left\vert \xi\right\vert ^{2}$, or more
generally, that $\tau_{h}^{H}\sim1/h$ and the Hessian of $H$ is definite.
Then:
\[
\begin{array}
[c]{ll}
\text{if }\tau_{h}\ll1/h, & \exists\nu\in\mathcal{M}\left(  \tau\right)
\text{ such that }\nu\perp dtdx;\smallskip\\
\text{if }\tau_{h}\sim1/h, & \mathcal{M}\left(  \tau\right)  \subseteq
L^{\infty}\left(  \mathbb{R};L^{1}\left(  \mathbb{T}^{d}\right)  \right)
;\smallskip\\
\text{if }\tau_{h}\gg1/h & \mathcal{M}\left(  \tau\right)  =\overline
{\operatorname{Conv}\mathcal{M}\left(  \infty\right)  }.
\end{array}
\]

\end{remark}

Note that in this case the regularity of semiclassical measures
can be precised. The elements in $\mathcal{M}\left( \infty\right)$
are trigonometric polynomials when $d=2$, as shown in
\cite{JakobsonTori97}; and in general they are more regular than
merely absolutely continuous, see \cite{Aissiou, JakobsonTori97,
JakNadToth01}. The same phenomenon occurs with those elements in
$\mathcal{M}\left( 1/h\right)$ that are obtained through sequences
whose corresponding semiclassical measures do not charge $\{ \xi=0
\}$, see \cite{AJM11}.

\subsection{Organisation of the paper}

The key argument of this article is a second microlocalisation on primitive
submodules which is the subject of Section~\ref{s:second} and leads to
Theorems~\ref{mu^Lambda} and~\ref{Thm Properties}. In Section~\ref{s:successive}, successive microlocalisations
allow to prove Theorem~\ref{t:precise} and Theorem~\ref{t:main}(2) when $\tau_h\sim 1/h$. Examples are developed in Section~\ref{s:halpha} in order to prove
Theorem~\ref{t:main}(1). Finally, the results concerning hierarchy of
time-scales are proved in Section~\ref{s:hierarchy} (and lead to Theorem~\ref{t:main} for $\tau_h\gg1/h$).

\subsection*{Acknowledgements}

The authors would like to thank Jared Wunsch for communicating to
them the construction in example (3) in Section \ref{s:halpha}.3. They are also grateful to  Luc Hillairet for
helpful discussions related to some of the results in
Section~\ref{s:hierarchy}.
Part of this work was done as F. Macia was visiting the Laboratoire
d'Analyse et de Math\'ematiques Appliqu\'ees at Universit\'e de
Paris-Est, Cr\'eteil. He wishes to thank this institution for its
support and hospitality.

%%%%%%%%%%%%%%%%%%%%%%%%%%%%%%%%%%%%%%%%%%%%%%%%%%%%%%%%%%%%%%%%%
%%%%%%%%%%%%%%%%%%%%%%%%%%%%%%%%%%%%%%%%%%%%%%%%%%%%%%%%%%%%%%%%%%

\section{Two-microlocal analysis of integrable systems on $\mathbb{T}^{d}$}
\label{sec:reson}

\subsection{Invariant measures and a resonant partition of
phase-space\label{s:decompo}}

As in \cite{AnantharamanMacia}, the first step in our strategy to characterise the elements in $\widetilde
{\mathcal{M}}\left(  \tau\right)  $ consists in introducing a partition of
phase-space $T^{\ast}\mathbb{T}^{d}$ according to the order of ``resonance'' of $\xi$, that induces a decomposition of the
measures $\mu\in\widetilde{\mathcal{M}}\left(  \tau\right)  $. We say that a measure $\mu
\in\mathcal{M}_{+}\left(  T^{\ast}\mathbb{T}^{d}\right)  $ is a \emph{positive
$H$-invariant measure} on $T^{\ast}\mathbb{T}^{d}$ whenever $\mu$ is invariant
under the action of the hamiltonian flow
\begin{equation}
\left(  \phi_{s}\right)  _{\ast}\mu=\mu,\quad\text{with }\phi_{s}\left(
x,\xi\right)  =\left(  x+sdH(\xi),\xi\right)  .\label{inv}%
\end{equation}
 Recall that $\mathcal{L}$ is the family of all primitive submodules of
$(\mathbb{Z}^{d})^{\ast}$ and that with each $\Lambda\in\mathcal{L}$, we
associate the set $I_{\Lambda}$ defined in~(\ref{def:ILambda}):
if $\Lambda^{\bot}\subseteq\mathbb{R}^{d}$ is the orthogonal to $\Lambda$ with
respect to the duality in $(\mathbb{R}^{d})^{\ast}\times\mathbb{R}^{d}$ then
$I_{\Lambda}=dH^{-1}(\Lambda^{\bot})$. Denote by $\Omega_{j}\subset
\mathbb{R}^{d}$, for $j=0,...,d$, the set of resonant vectors of order exactly
$j$, that is:
\[
\Omega_{j}:=\left\{  \xi\in(\mathbb{R}^{d})^*:\operatorname*{rk}\Lambda_{\xi
}=d-j\right\}  ,
\]
where
\[
\Lambda_{\xi}:=\left\{  k\in(\mathbb{Z}^{d})^{\ast}:k\cdot dH(\xi)=0\right\}
.
\]
Note that the sets $\Omega_{j}$ form a partition of $(\mathbb{R}^{d})^*$, and that
$\Omega_{0}=dH^{-1}\left(  \left\{  0\right\}  \right)  $; more generally,
$\xi\in\Omega_{j}$ if and only if the Hamiltonian orbit $\left\{  \phi
_{s}\left(  x,\xi\right)  :s\in\mathbb{R}\right\}  $ issued from any
$x\in\mathbb{T}^{d}$ in the direction $\xi$ is dense in a subtorus of
$\mathbb{T}^{d}$ of dimension $j$.

The set $\Omega:=\bigcup_{j=0}^{d-1}\Omega_{j}$ is usually called the set of
\emph{resonant }momenta, whereas $\Omega_{d}=(\mathbb{R}^{d})^*\setminus\Omega$ is
referred to as the set of \emph{non-resonant } momenta.

Finally, write
\[
R_{\Lambda}:=I_{\Lambda}\cap\Omega_{d-\operatorname*{rk}\Lambda}.
\]
Saying that $\xi\in R_{\Lambda}$ is equivalent to any of the following statements:

\begin{enumerate}
\item[(i)] for any $x_{0}\in\IT^{d}$ the time-average $\frac{1}{T}\int_{0}
^{T}\delta_{x_{0}+tdH(\xi)}\left(  x\right)  dt$ converges weakly, as
$T\rightarrow\infty$, to the Haar measure on the torus $x_{0}+\mathbb{T}
_{\Lambda^{\perp}}\text{.}$ Here, we have used the notation $\mathbb{T}
_{\Lambda^{\perp}}:=\Lambda^{\perp}/\left(  2\pi\mathbb{Z}^{d}\cap
\Lambda^{\perp}\right)  $;

\item[(ii)] $\Lambda_{\xi}=\Lambda$.
\end{enumerate}

Moreover, if $\operatorname*{rk}\Lambda=d-1$ then $R_{\Lambda}=dH^{-1}\left(
\Lambda^{\perp}\setminus\{0\}\right)  =I_{\Lambda}\setminus\Omega_{0}$. Note
that,
\begin{equation}
(\mathbb{R}^{d})^*=\bigsqcup_{\Lambda\in\mathcal{L}}R_{\Lambda}, \label{part}
\end{equation}
that is, the sets $R_{\Lambda}$ form a partition of $(\mathbb{R}^{d})^*$. As a
consequence, any measure $\mu\in{\mathcal{M}}_{+}(T^{\ast}\R^{d})$ decomposes
as
\begin{equation}
\mu=\sum_{\Lambda\in\mathcal{L}}\mu\rceil_{\mathbb{T}^{d}\times R_{\Lambda}}.
\label{dec}
\end{equation}
Therefore, the analysis of a measure $\mu$, reduces to that of $\mu
\rceil_{\T^{d}\times R_{\Lambda}}$ for all primitive submodule $\Lambda$.
Given an $H$-invariant measure $\mu$, it turns out that $\mu\rceil
_{\T^{d}\times R_{\Lambda}}$ are utterly determined by the Fourier
coefficients of $\mu$. Indeed, define the complex measures on $\mathbb{R}^{d}
$:
\[
\widehat{\mu}_{k} :=\int_{\mathbb{T}^{d}}\frac{e^{-ik\cdot x}}{\left(
2\pi\right)  ^{d/2}}\mu\left(  dx,\cdot\right)  ,\quad k\in\mathbb{Z}^{d},
\]
so that, in the sense of distributions,
\[
\mu\left(  x,\xi\right)  =\sum_{k\in\mathbb{Z}^{d}}\widehat{\mu}_{k}\left(
\xi\right)  \frac{e^{ik\cdot x}}{\left(  2\pi\right)  ^{d/2}}.
\]
Then, the following Proposition holds.

\begin{proposition}
\label{prop:decomposition} Let $\mu\in\mathcal{M}_{+}\left(  T^{\ast
}\mathbb{T}^{d}\right)  $ and $\Lambda\in\mathcal{L}$. The distribution:
\[
\left\langle \mu\right\rangle _{\Lambda}\left(  x,\xi\right)  :=\sum
_{k\in\Lambda}\widehat{\mu}_{k}\left(  \xi\right)  \frac{e^{ik\cdot x}
}{\left(  2\pi\right)  ^{d/2}}
\]
is a finite, positive Radon measure on $T^{\ast}\mathbb{T}^{d}$.\newline
Moreover, if $\mu$ is a positive $H$-invariant measure on $T^{\ast}
\mathbb{T}^{d}$, then every term in the decomposition (\ref{dec}) is a
positive $H$-invariant measure, and
\begin{equation}
\mu\rceil_{\mathbb{T}^{d}\times R_{\Lambda}}=\left\langle \mu\right\rangle
_{\Lambda}\rceil_{\mathbb{T}^{d}\times R_{\Lambda}}. \label{eq:muRLambda}
\end{equation}
Besides, identity~(\ref{eq:muRLambda}) is equivalent to the fact that
$\mu\rceil_{\mathbb{T}^{d}\times R_{\Lambda}}$ is invariant by the
translations
\[
\left(  x,\xi\right)  \longmapsto\left(  x+v,\xi\right)  ,\quad\text{for every
}v\in\Lambda^{\perp}.
\]

\end{proposition}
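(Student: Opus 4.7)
The plan is to identify $\left\langle \mu\right\rangle _{\Lambda}$ with the geometric average of $\mu$ under translations by the subtorus $\mathbb{T}_{\Lambda^{\perp}}$, and then to combine this with the equidistribution of the linear flow $\phi_{s}$ on orbits indexed by $\xi\in R_{\Lambda}$ (statement (i) of the proposition). Primitivity of $\Lambda$ makes $\mathbb{T}_{\Lambda^{\perp}}$ a genuine subtorus of $\mathbb{T}^{d}$: since $\left\langle \Lambda\right\rangle $ is a rational subspace of $(\mathbb{R}^{d})^{\ast}$, the lattice $2\pi\mathbb{Z}^{d}\cap\Lambda^{\perp}$ has full rank in $\Lambda^{\perp}$. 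Normalising Haar measure $dv$ on $\mathbb{T}_{\Lambda^{\perp}}$ to unit mass, a direct character computation gives
\[
\int_{\mathbb{T}_{\Lambda^{\perp}}}e^{-ik\cdot v}\,dv=\mathbf{1}_{k\in\Lambda},\qquad k\in(\mathbb{Z}^{d})^{\ast},
\]
the nontrivial direction using precisely the identity $\left\langle \Lambda\right\rangle \cap(\mathbb{Z}^{d})^{\ast}=\Lambda$. It follows that the projector~\eqref{def:aLambda} coincides with the averaging $\left\langle a\right\rangle _{\Lambda}(x,\xi)=\int_{\mathbb{T}_{\Lambda^{\perp}}}a(x+v,\xi)\,dv$. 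Defining $\Pi_{\Lambda}\mu$ on positive Radon measures by duality through $\left\langle \Pi_{\Lambda}\mu,a\right\rangle :=\int\left\langle a\right\rangle _{\Lambda}\,d\mu$ exhibits $\Pi_{\Lambda}\mu$ as the $\mathbb{T}_{\Lambda^{\perp}}$-average of push-forwards of $\mu$ by the translations $(x,\xi)\mapsto(x-v,\xi)$, hence as a finite positive Radon measure. Expanding $a$ in Fourier series in $x$ and invoking the character identity identifies $\Pi_{\Lambda}\mu$ with the distribution $\left\langle \mu\right\rangle _{\Lambda}$, which settles the first assertion.

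Assume now that $\mu$ is $H$-invariant. Since $\phi_{s}$ preserves each slice $\mathbb{T}^{d}\times\{\xi\}$, the set $\mathbb{T}^{d}\times R_{\Lambda}$ is $\phi_{s}$-invariant, so $\mu\rceil_{\mathbb{T}^{d}\times R_{\Lambda}}$ is $H$-invariant; similarly the translations in $x$ defining $\Pi_{\Lambda}$ commute with $\phi_{s}$, so $\left\langle \mu\right\rangle _{\Lambda}=\Pi_{\Lambda}\mu$ is $H$-invariant too. To establish~\eqref{eq:muRLambda}, take $a\in\mathcal{C}_{c}(T^{\ast}\mathbb{T}^{d})$ supported in $\mathbb{T}^{d}\times R_{\Lambda}$ and use $H$-invariance of $\mu$ to write, for every $T>0$,
\[
\int a\,d\mu=\int\frac{1}{T}\int_{0}^{T}a(x+sdH(\xi),\xi)\,ds\,d\mu.
\]
For $\xi\in R_{\Lambda}$, that is $\Lambda_{\xi}=\Lambda$, statement~(i) ensures that the inner time average converges pointwise to $\left\langle a\right\rangle _{\Lambda}(x,\xi)$ as $T\to\infty$. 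Since these averages are uniformly bounded by $\|a\|_{\infty}$, dominated convergence yields $\int a\,d\mu=\int\left\langle a\right\rangle _{\Lambda}\,d\mu=\int a\,d\left\langle \mu\right\rangle _{\Lambda}$, which is exactly~\eqref{eq:muRLambda}.

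For the equivalence: if~\eqref{eq:muRLambda} holds, the right-hand side is translation-invariant under $(x,\xi)\mapsto(x+v,\xi)$ for every $v\in\Lambda^{\perp}$, by the very construction of $\left\langle \mu\right\rangle _{\Lambda}=\Pi_{\Lambda}\mu$ as an average. Conversely, if $\mu\rceil_{\mathbb{T}^{d}\times R_{\Lambda}}$ has this invariance, then $\Pi_{\Lambda}$ fixes it, and since $\Pi_{\Lambda}$ commutes with restriction to $\mathbb{T}^{d}\times R_{\Lambda}$ (the averaging acts only in $x$), we recover $\mu\rceil_{\mathbb{T}^{d}\times R_{\Lambda}}=\left\langle \mu\right\rangle _{\Lambda}\rceil_{\mathbb{T}^{d}\times R_{\Lambda}}$. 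The only delicate step in the whole proof is the ergodic one: statement~(i) is pointwise in $(x,\xi)$, and its transfer to an integral identity against the measure $\mu$ relies on the uniform boundedness of the time averages together with dominated convergence; everything else is essentially book-keeping around the averaging operation.
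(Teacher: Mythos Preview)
Your approach is correct and is the standard one; the paper itself does not spell out a proof here, merely referring to Lemmas~6 and~7 of \cite{AnantharamanMacia}, which rest on precisely the identification of $\langle\mu\rangle_{\Lambda}$ with the $\mathbb{T}_{\Lambda^{\perp}}$-average of $\mu$ and on the equidistribution of the linear flow that you invoke.

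There is, however, one technical slip worth repairing. When you write ``take $a\in\mathcal{C}_{c}(T^{\ast}\mathbb{T}^{d})$ supported in $\mathbb{T}^{d}\times R_{\Lambda}$'', notice that for $\Lambda\neq\{0\}$ the set $R_{\Lambda}$ sits inside $I_{\Lambda}=dH^{-1}(\Lambda^{\perp})$, which (wherever $d^{2}H$ is non-degenerate) is a submanifold of codimension $\operatorname{rk}\Lambda>0$ and hence has empty interior; the only continuous function supported there is zero, so your class of test functions is vacuous. The fix is immediate and does not touch the idea: test against an \emph{arbitrary} $a\in\mathcal{C}_{c}(T^{\ast}\mathbb{T}^{d})$, but integrate with respect to the already restricted, already $H$-invariant measure $\mu\rceil_{\mathbb{T}^{d}\times R_{\Lambda}}$. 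On its support the time average $\frac{1}{T}\int_{0}^{T}a\circ\phi_{s}\,ds$ converges pointwise to $\langle a\rangle_{\Lambda}$, and dominated convergence closes the argument exactly as you wrote. Equivalently, test against $a\,\mathbf{1}_{R_{\Lambda}}(\xi)$ and use that the $x$-averaging $\Pi_{\Lambda}$ commutes with multiplication by functions of $\xi$ alone.
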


The proof of Proposition~\ref{prop:decomposition} follows the lines of those
of Lemmas~6 and~7 of~\cite{AnantharamanMacia}. We also point out that this
decomposition depends on the function $H$ through the definition of
$I_{\Lambda}$. Consequently, it is possible to make different choices for
decomposing  a measure $\mu$; this fact will be exploited in
Section~\ref{s:successive}.

%%%%%%%%%%%%%%%%%%%%%%%%%%%%%%%%%%%%%%%%%%%%%%%%%%%%

\subsection{Second microlocalization on a resonant  submanifold\label{s:second}}

 Let $\left(
u_{h}\right) $ be a bounded sequence in $L^{2}\left(
\mathbb{T}^{d}\right)  $ and suppose (after extraction of a
subsequence) that its Wigner distributions $w_{h}(t)=w^h_{
S^{t\tau_h}_{h}u_h}$ converge to a
semiclassical measure $\mu\in L^{\infty}\left(  \mathbb{R};\mathcal{M}
_{+}\left(  T^{\ast}\mathbb{T}^{d}\right)  \right)  $ in the
weak-$\ast$ topology of $L^{\infty}\left(
\mathbb{R};\mathcal{D}^{\prime}\left(  T^{\ast
}\mathbb{T}^{d}\right)  \right)  $.

From now on, we shall assume that the time scale $\left(
\tau_{h}\right)
$ satisfies:
\begin{equation}
(h\tau_{h}) \quad \text{is a bounded sequence.} \label{e:htaubdd}
\end{equation}
Given $\Lambda\in \cL$, the purpose of this section is to study
the measure
 $\mu\rceil_{\mathbb{T}^{d}\times R_{\Lambda}}$ by performing a second microlocalization along $I_\Lambda$ in
 the spirit of~\cite{Fermanian2micro, FermanianShocks, FermanianGerardCroisements, NierScat, MillerThesis}
and~\cite{AnantharamanMacia, MaciaTorus}. By
Proposition~\ref{prop:decomposition}, it suffices to characterize
the action of $\mu\rceil_{\mathbb{T}^{d}\times R_{\Lambda}}$ on
test functions having only $x$-Fourier modes in $\Lambda$. With
this in mind, we shall introduce two auxiliary ``distributions''
which describe more precisely how $w_{h}\left( t\right)  $
concentrates along $\mathbb{T}^{d}\times I_\Lambda$. They are
actually not mere distributions, but lie in the dual of the class
of symbols $\mathcal{S}_{\Lambda}^{1}$ that we define below.

In what follows, we fix $\xi_{0}\in R_{\Lambda}$ such that
$d^{2}H(\xi_{0})$ is definite and, without loss of
generality\footnote{This can be made by applying a cut-off in
frequencies to the data.}, we restrict our discussion to
normalised sequences of initial data $(u_{h})$ that satisfy:
\[
\widehat{u_{h}}\left(  k\right)  =0,\qquad\text{for }hk\in\mathbb{R}
^{d}\setminus B(\xi_{0};\eps/2),
\]
where $B(\xi_{0},\eps/2)$ is the ball of radius $\eps/2$
centered at $\xi_{0}$. The parameter $\eps>0$ is taken small
enough, in order that
\[
d^{2}H(\xi)\text{ is definite for  all }\xi\in B(\xi_{0},\eps)\text{;}
\]
this implies that $I_{\Lambda}\cap B(\xi_{0},\eps)$ is a
submanifold of dimension $d-\mathrm{rk}\Lambda$, everywhere
transverse to $\la\Lambda\ra$, the vector subspace of $(\R^{d})^*$
generated by $\Lambda$.\footnote{Note that this is achieved under
the weaker hypothesis that $d^{2}H(\xi)$ is non-singular and
defines a definite biliear form on $\la \Lambda \ra \times \la
\Lambda \ra$.} By eventually reducing $\eps$, we have
\[
B(\xi_{0},\eps/2)\subset(I_{\Lambda}\cap
B(\xi_{0},\eps))\oplus\la\Lambda\ra,
\]
by which we mean that any element $\xi\in B(\xi_{0},\eps/2)$ can
be decomposed in a unique way as $\xi=\sigma+\eta$ with $\sigma\in
I_{\Lambda}\cap B(\xi _{0},\eps)$ and $\eta\in\la\Lambda\ra$. We
thus get a map
\begin{align}
F:B(\xi_{0},\eps/2)  & \To\left(  I_{\Lambda}\cap
B(\xi_{0},\eps)\right)
\times\la\Lambda\ra\label{e:defF}\\
\xi & \longmapsto(\sigma(\xi),\eta(\xi))\nonumber
\end{align}

With this decomposition  of the space of frequencies, we associate
two-microlocal test-symbols. We denote by
$\mathcal{S}_{\Lambda}^{1}$ the
class of smooth functions $a\left(  x,\xi,\eta\right)  $ on $T^{\ast}
\mathbb{T}^{d}\times \la\Lambda\ra$ that are:

\begin{enumerate}
\item[(i)] compactly supported on $\left(  x,\xi\right)  \in
T^{\ast }\mathbb{T}^{d}$, $\xi\in B(\xi_0,\eps/2)$,\smallskip

\item[(ii)] homogeneous of degree zero at infinity w.r.t.
$\eta\in\la\Lambda\ra$, $\emph{i.e.}$ such that there exist
$R_{0}>0$ and $a_{\text{hom}}\in \cC_{c}^{\infty}\left(
T^{\ast}\mathbb{T}^{d}\times\mathbb{S}\la\Lambda\ra\right)  $
with
\[
a\left(  x,\xi,\eta\right)  =a_{\text{hom}}\left(
x,\xi,\frac{\eta }{\left\vert \eta\right\vert }\right)
\text{,\quad for }\left\vert
\eta\right\vert >R_{0}\text{ and }\left(  x,\xi\right)  \in T^{\ast}
\mathbb{T}^{d}
\]
(we have denoted by $\mathbb{S}\la\Lambda\ra$ the unit sphere in
$\la\Lambda\ra\subseteq(\mathbb{R}^{d})^{*}$);\smallskip

\item[(iii)] such that their non vanishing Fourier coefficients
(in the $x$
variable) correspond to frequencies $k\in\Lambda$:
\[
a\left(  x,\xi,\eta\right)
=\sum_{k\in\Lambda}\widehat{a}_{k}\left(\xi ,\eta\right)
\frac{e^{ik\cdot x}}{\left(  2\pi\right) ^{d/2}}.
\]
We will also express this fact by saying that $\emph{a}$\emph{ has
only $x$-Fourier modes in $\Lambda$.}
\end{enumerate}

Let $\chi\in \cC_{c}^{\infty}\left(\la\Lambda\ra\right)  $ be a
nonnegative cut-off function that is identically equal to one near
the origin. For $a\in\cS_{\Lambda}^{1}$,  $R>1$, $\delta<1$, we
decompose $a$ into:
$a(x,\xi,\eta)=\sum_{j=1}^3a_j(x,\xi,\eta)$ with
\begin{eqnarray}
\nonumber
 a_1(x,\xi,\eta)  & :=  & a(x,\xi,\eta)\left(1-\chi\left({\eta\over R}\right)\right)\left(1- \chi\left({\eta(\xi)\over\delta}\right) \right),\\
 \label{def:a2}
 a_2(x,\xi,\eta) & := & a(x,\xi,\eta)\left(1-\chi\left({\eta\over R}\right)\right) \chi\left({\eta(\xi)\over\delta}\right) ,\\
 \label{def:a3}
 a_3(x,\xi,\eta) & := & a(x,\xi,\eta)\chi\left({\eta\over R}\right).
 \end{eqnarray}
 This induces a decomposition of the Wigner
 distribution: $$w_h(t)=w_{h,R, \delta}^{I_\Lambda}\left(  t\right)+w_{I_\Lambda,h,R}\left(  t\right)+  w^{I_\Lambda^c}_{h,R, \delta}\left(  t\right)$$ (when testing against functions $a$ with Fourier modes in $\Lambda$), where:
\[
\left\langle w_{h,R, \delta}^{I_\Lambda}\left(  t\right)
,a\right\rangle :=\int _{T^{\ast}\mathbb{T}^{d}}   a_2\left(
x,\xi,\tau_h \eta(\xi) \right)  w_{h}\left(  t\right) \left(
dx,d\xi\right)  ,
\]
\begin{equation}
\left\langle w_{I_\Lambda,h,R}\left(  t\right)  ,a\right\rangle
:=\int_{T^{\ast }\mathbb{T}^{d}} a_3\left(  x,\xi,\tau_h\eta(\xi)
\right)  w_{h}\left(
t\right)  \left(  dx,d\xi\right)  , \label{subL}
\end{equation}
and
\begin{equation}
\left\langle w^{I_\Lambda^c}_{h,R, \delta}\left(  t\right)
,a\right\rangle :=\int_{T^{\ast }\mathbb{T}^{d}} a_1\left(
x,\xi,\tau_h\eta(\xi)\right)  w_{h}\left(
t\right)  \left(  dx,d\xi\right)  ,
\end{equation}
that we shall analyse in the limits $h\To 0^+$, $R\To +\infty$ and
$\delta\To 0$ (taken in that order). One sees that
\[
\lim_{\delta\To 0}\lim_{R\rightarrow\infty}\lim_{h\rightarrow
0}\int_{\mathbb{R}} \theta(t)\left\langle w^{I_\Lambda^c}_{h,R,
\delta}\left( t\right)  ,a\right\rangle dt =\int_{\mathbb{R}}\int
_{T^{*}\T^d} \theta(t) a_\infty\left(x,\xi,{\eta(\xi)\over
|\eta(\xi)|} \right)\mu(t, dx,d\xi)\rceil_{\T^d\times I_\Lambda^c
}dt \] where $\mu\in\mathcal{\widetilde{M}}\left( \tau_h\right) $ is
the semiclassical measure obtained through the sequence $(u_h)$.
The restriction of the measure thus obtained to $\T^d\times
R_\Lambda$ vanishes, and we do not need to further analyse
the term involving the distribution $w^{I_\Lambda^c}_{h,R,
\delta}\left( t\right)$.

For $a\in{\mathcal S}^1_\Lambda$, we introduce the notation
$$\Op^\Lambda_h(a(x,\xi,\eta)):=\Op_h\left(a\left(x,\xi,\tau_h\eta(\xi)\right)\right)$$
so that the distributions $w_{h,R,\delta}^{I_\Lambda}(t)$ and
$w_{I_\Lambda,h,R}(t)$ can be expressed for all $t\in\R$ by
\begin{eqnarray*}
\langle w_{h,R,\delta}^{I_\Lambda}(t) ,a\rangle & = & \langle u_h, S^{-\tau_ht}_{h}\Op^\Lambda_h(a_2)S^{\tau_ht}_{h}u_h\rangle_{L^{2}(\T^d)} ,\\
\langle w_{I_\Lambda,h,R}^{\Lambda}(t) ,a\rangle & = & \langle
u_h,
S^{-\tau_ht}_{h}\Op^\Lambda_h(a_3)S^{\tau_ht}_{h}u_h\rangle_{L^{2}(\T^d)}
.
\end{eqnarray*}
Notice that, for all
$\beta\in\N^{d}$,
\[
\left\Vert \partial_{\xi}^{\beta}\left(a\left(
x,h\xi,\tau_{h}{\eta(h\xi )}\right)\right)\right\Vert
_{L^{\infty}}\leq C_{\beta }\left(  \tau_{h}{h}\right)
^{|\beta|}.
\]
The Calder\'{o}n-Vaillancourt theorem (see~\cite{CV} or the
appendix of~\cite{AnantharamanMacia} for a precise statement)
therefore ensures that
there exist $N\in\N$ and $C_{N}>0$ such that
\begin{equation}
\forall a\in{\mathcal{S}}_{\Lambda}^{1},\quad\left\Vert
\Op_{h}^{\Lambda }(a)\right\Vert
_{{\mathcal{L}}(L^{2}(\R^{d}))}\leq C_{N}\sum_{|\alpha|\leq
N}\Vert\partial_{x,\xi,\eta}^{\alpha}a\Vert_{L^{\infty}},\label{eq:CVeta}
\end{equation}
since $\left(  h\tau_{h}\right)  $ is bounded.

As a consequence of~(\ref{eq:CVeta}), both $ w_{h,R, \delta}^{I_\Lambda}$ and
$w_{I_\Lambda,h,R}$ are
bounded in $L^{\infty}\left(  \mathbb{R};\left(  \mathcal{S}_{\Lambda}
^{1}\right)  ^{\prime}\right)  $. After possibly extracting
subsequences, we have for every $\varphi\in L^{1}\left(
\mathbb{R}\right)  $ and
$a\in\mathcal{S}_{\Lambda}^{1}$,
\[
\int_{\mathbb{R}}\varphi\left(  t\right)  \left\langle
\tilde{\mu}^{\Lambda
}\left(  t,\cdot\right)  ,a\right\rangle dt:=\lim_{\delta\To 0}\lim_{R\rightarrow\infty}
\lim_{h\rightarrow0^{+}}\int_{\mathbb{R}}\varphi\left( t\right)
\left\langle w_{h,R, \delta}^{I_\Lambda}\left( t\right)
,a\right\rangle dt,
\]
and
\begin{equation}
\int_{\mathbb{R}}\varphi\left(  t\right)  \left\langle
\tilde{\mu}_{\Lambda
}\left(  t,\cdot\right)  ,a\right\rangle dt:=\lim_{R\rightarrow\infty}
\lim_{h\rightarrow0^{+}}\int_{\mathbb{R}}\varphi\left( t\right)
\left\langle
w_{I_\Lambda,h,R}\left(  t\right)  ,a\right\rangle dt. \label{doublelim}
\end{equation}

\begin{remark}\label{rem:gain}
When
$\tau_{h}\ll1/h$ the quantization of our symbols
generates a semi-classical pseudodifferential calculus with gain
$h\tau_{h}$. The operators $\Op_{h}^{\Lambda
}(a)$ are semiclassical both in $\xi$ and $\eta$. This implies that the accumulation points
$\tilde\mu^\Lambda$ and $\tilde\mu_\Lambda$ are positive measures (see for instance
\cite{FermanianGerardCroisements}). \label{r:positiv}
\end{remark}

Because of the existence of
$R_{0}>0$ and of $a_{\text{hom}}\in C_{c}^{\infty}\left(
T^{\ast }\mathbb{T}^{d}\times\mathbb{S}\left\langle
\Lambda\right\rangle \right)  $ such that
\[
a\left(  x,\xi,\eta\right)  =a_{\text{hom}}\left(
x,\xi,\frac{\eta }{\left\vert \eta\right\vert }\right)
,\quad\text{for }\left\vert \eta\right\vert \geq R_{0},
\]
 for $R$ large enough, the value $\left\langle
w_{h,R,\delta}^{I_\Lambda}\left( t\right)  ,a\right\rangle $ only
depends on $a_{\text{hom}}$. Therefore, the limiting object
$\tilde{\mu}^{\Lambda}\left(  t,\cdot\right)\in \left(\mathcal{S}_{\Lambda}^{1}\right)'$ is
zero-homogeneous in the last variable $\eta\in\mathbb{R}^{d}$, supported at infinity, and,
by construction, it is supported on $\xi\in I_\Lambda$. This can
be also expressed as the fact that $\tilde\mu^\Lambda$ is a
``distribution'' on $\T^{d}\times I_{\Lambda }\times
\overline{\la\Lambda\ra}$ (where $\overline{\la\Lambda\ra}$ is the
compactification of $\la\Lambda\ra$ by adding the sphere
$\mathbb{S}\la\Lambda\ra$ at infinity) supported on
$\{\eta\in\mathbb{S}\la\Lambda\ra\}$. Besides, the distribution
$\tilde \mu_\Lambda$ is  supported on $\T^{d}\times I_{\Lambda
}\times \la \Lambda \ra$. Indeed, we have for all~$t$,
\begin{equation}\label{eq:a3decompose}
\langle w_{I_\Lambda, h, R}(t), a(x,\xi,\eta)\rangle = \langle
w_{I_\Lambda,h,R}(t), a(x,\sigma(\xi),\eta) \rangle+O(\tau_h^{-1})
\end{equation}
since, by~(\ref{eq:CVeta}),
\begin{eqnarray*}
\Op_h^\Lambda(a_3(x,\xi,\eta)) & = &
\Op_h^\Lambda(a(x,\sigma(\xi)+\tau_h^{-1} \eta,\eta)
\chi(\eta/R))\\ & = & \Op_h^\Lambda(a(x,\sigma(\xi),\eta)
\chi(\eta/R))+O(\tau_h^{-1})
\end{eqnarray*}
where the $O(\tau_h^{-1})$ term is understood in the sense of the
operator norm of ${\mathcal L}(L^2(\R^d))$ and depends on $R$ (the
fact that we first let $h$ go to $0^+$ is crucial here).

$ $

From the decomposition $w_h(t)=w_{h,R, \delta}^{I_\Lambda}\left(  t\right)+w_{I_\Lambda,h,R}\left(  t\right)+  w^{I_\Lambda^c}_{h,R, \delta}\left(  t\right)$ (when testing against symbols having Fourier modes in $\Lambda$), it is immediate that the measure $\mu\left(  t,\cdot\right)
\rceil_{\mathbb{T}^{d}\times R_{\Lambda}}$ is related to
$\tilde\mu^\Lambda$ and $\tilde\mu_\Lambda$ according to the
following Proposition.

\begin{proposition}\label{p:decomposition}
Let
\[
\mu^{\Lambda}\left(  t,\cdot\right)  :=\int_{\left\langle
\Lambda\right\rangle
}\tilde{\mu}^{\Lambda}\left(  t,\cdot,d\eta\right)  \rceil_{\mathbb{T}
^{d}\times R_{\Lambda}},\quad\mu_{\Lambda}\left(  t,\cdot\right)
:=\int_{\left\langle \Lambda\right\rangle
}\tilde{\mu}_{\Lambda}\left( t,\cdot,d\eta\right)
\rceil_{\mathbb{T}^{d}\times R_{\Lambda}}.
\]
Then both $\mu^{\Lambda}\left(  t,\cdot\right)  $ and
$\mu_{\Lambda}\left(
t,\cdot\right)  $ are $H$-invariant positive measures on $T^{\ast}\mathbb{T}^{d}$and satisfy:
\begin{equation}
\mu\left(  t,\cdot\right)  \rceil_{\mathbb{T}^{d}\times R_{\Lambda}}
=\mu^{\Lambda}\left(  t,\cdot\right)  +\mu_{\Lambda}\left(
t,\cdot\right)  . \label{eq:decomposition}
\end{equation}
\end{proposition}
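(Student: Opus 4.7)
The plan is to pair both sides of~(\ref{eq:decomposition}) with a smooth $a\in C_{c}^{\infty}(T^{\ast}\T^{d})$ supported in $\xi\in B(\xi_{0},\eps/2)$ whose $x$-Fourier modes all lie in $\Lambda$. By Proposition~\ref{prop:decomposition}, such symbols already characterise any $H$-invariant measure after restriction to $\T^{d}\times R_{\Lambda}$. I view $a$ as an $\eta$-constant element of $\mathcal{S}_{\Lambda}^{1}$ and apply the splitting $a=a_{1}+a_{2}+a_{3}$ of~(\ref{def:a2})--(\ref{def:a3}), then take the iterated limit $h\to 0^{+}$, $R\to\infty$, $\delta\to 0$. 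By the very definition of $\tilde{\mu}^{\Lambda}$ and $\tilde{\mu}_{\Lambda}$, the $a_{2}$- and $a_{3}$-contributions yield, after the $\eta$-integration and restriction built into $\mu^{\Lambda}$ and $\mu_{\Lambda}$, exactly $\langle \mu^{\Lambda}(t,\cdot),a\rangle$ and $\langle \mu_{\Lambda}(t,\cdot),a\rangle$. For the $a_{1}$-contribution, $a_{1}(x,\xi,\tau_{h}\eta(\xi))$ converges pointwise to $a(x,\xi)\mathbf{1}_{\{\eta(\xi)\neq 0\}}$ in the iterated limit, so that classical semiclassical convergence of the Wigner distributions produces $\langle \mu(t,\cdot)\rceil_{\T^{d}\times I_{\Lambda}^{c}},a\rangle$, which vanishes upon further restriction to $\T^{d}\times R_{\Lambda}\subset\T^{d}\times I_{\Lambda}$. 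Summing the three pieces gives~(\ref{eq:decomposition}).

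Positivity of $\tilde{\mu}^{\Lambda}$ and $\tilde{\mu}_{\Lambda}$ --- and therefore of $\mu^{\Lambda}$ and $\mu_{\Lambda}$, which are obtained from them by nonnegative $\eta$-integration and restriction --- I would deduce from a sharp G\r{a}rding inequality. The Calder\'{o}n--Vaillancourt bound~(\ref{eq:CVeta}) together with the boundedness of $h\tau_{h}$ places the family $\{a(x,\xi,\tau_{h}\eta(\xi))\}_{h}$ uniformly inside a standard semiclassical symbol class in $(x,\xi)$ (with $\eta$ playing the role of a parameter); then for any nonnegative $a\in\mathcal{S}_{\Lambda}^{1}$ one has $\langle \Op_{h}^{\Lambda}(a)u_{h},u_{h}\rangle\geq -C_{a}\,h$, and nonnegativity is preserved in the triple limit. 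For $H$-invariance I would invoke Egorov's theorem for the Fourier multiplier $H(hD_{x})$: since $\phi_{s}$ preserves $\xi$ and hence $\eta(\xi)$, one has $(a\circ\phi_{s})(x,\xi,\tau_{h}\eta(\xi))=a(x+s\,dH(\xi),\xi,\tau_{h}\eta(\xi))$, and a direct commutator expansion yields
$$
S_{h}^{-s}\,\Op_{h}^{\Lambda}(a)\,S_{h}^{s}=\Op_{h}^{\Lambda}(a\circ\phi_{s})+O(h).
$$
Conjugating further by $S_{h}^{\tau_{h}t}$, the pairing against $w_{h}(t)$ turns $a$ into $a\circ\phi_{s}$ up to the time shift $t\mapsto t+s/\tau_{h}$, which is absorbed after averaging against $\theta\in L^{1}(\R)$ as $h\to 0^{+}$; this gives $(\phi_{s})_{\ast}\tilde{\mu}^{\Lambda}=\tilde{\mu}^{\Lambda}$ and the analogous identity for $\tilde{\mu}_{\Lambda}$, and $H$-invariance of $\mu^{\Lambda}$ and $\mu_{\Lambda}$ follows.

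The main obstacle lies in the positivity claim at the critical scale $\tau_{h}\sim 1/h$, where the two-microlocal calculus loses its semiclassical gain and Remark~\ref{r:positiv} does not apply directly. The key observation, already implicit in the derivation of~(\ref{eq:CVeta}), is that the sole hypothesis $(h\tau_{h})$ bounded is enough to place $\Op_{h}^{\Lambda}(a)$ inside the ordinary semiclassical calculus in $(x,\xi)$ (with $\eta$ frozen); sharp G\r{a}rding in that calculus then applies uniformly and covers the regimes $\tau_{h}\ll 1/h$ and $\tau_{h}\sim 1/h$ in a single stroke.
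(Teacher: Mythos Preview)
Your derivation of the identity~(\ref{eq:decomposition}) and of $H$-invariance is correct and is precisely what the paper means when it calls the proposition ``immediate'' from the splitting $w_h(t)=w_{h,R,\delta}^{I_\Lambda}(t)+w_{I_\Lambda,h,R}(t)+w^{I_\Lambda^c}_{h,R,\delta}(t)$; the Egorov argument for $\phi_s^0$-invariance is the same as the one in the appendix and in Section~\ref{sec:infini}.

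The genuine gap is in your positivity argument at $\tau_h\sim 1/h$. You correctly flag the obstacle, but the proposed fix does not work: the bound~(\ref{eq:CVeta}) says only that $\partial_\xi^\beta\bigl(a(x,h\xi,\tau_h\eta(h\xi))\bigr)$ is controlled by $(h\tau_h)^{|\beta|}$, which is exactly what Calder\'on--Vaillancourt needs. It does \emph{not} say that the $\xi$-derivatives of the semiclassical symbol $b_h(x,\xi)=a(x,\xi,\tau_h\eta(\xi))$ are bounded uniformly in $h$; in fact $|\partial_\xi b_h|\sim\tau_h\to\infty$. Thus $b_h$ lies in an $S^0_{0,0}$-type class with no gain, and the constant in sharp G\r{a}rding --- which is governed by higher $\xi$-derivatives of the symbol --- blows up like a power of $\tau_h$. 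The phrase ``with $\eta$ frozen'' is also misleading: $\eta$ is not an external parameter but the function $\eta(\xi)$, and it is precisely its $\xi$-dependence that destroys the symbol estimates.

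This is why the paper does \emph{not} claim positivity of $\tilde\mu_\Lambda$ itself when $\tau_h=1/h$: Theorem~\ref{Thm Properties}(2) asserts only that its projection onto $T^*\T^d$ is a positive measure. That projection-level positivity is obtained by a genuinely different mechanism, the operator-valued measure $M$ of Proposition~\ref{prop:opvame}: for $\eta$-independent $a$ one rewrites $\langle w_{I_\Lambda,h,R}(t),a\rangle$ via the map $U_h$ as a sum of matrix elements on $L^2(\T^d,\Lambda)$ against honest (non-exotic) operators, and positivity of the limit then follows from the trace-class positivity of $M$. Positivity of $\tilde\mu^\Lambda$ is handled separately by microlocal-defect-measure arguments exploiting $0$-homogeneity in $\eta$ at infinity (Section~\ref{sec:infini}), again not by sharp G\r{a}rding.
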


This proposition motivates the analysis of the structure of the
accumulation points $\tilde{\mu}_{\Lambda}\left( t,\cdot\right)  $
and $\tilde{\mu}^{\Lambda}\left(  t,\cdot\right)  $.
 It turns out that both $\tilde{\mu}^{\Lambda}$
and $\tilde{\mu}_{\Lambda}$ have some extra regularity in the
variable $x$, although for two different reasons. Our next two
results form one of the key steps towards the proof of Theorem
\ref{t:main}.

Let us  first deal with $\tilde\mu^\Lambda(t,\cdot)$. We define, for $\left(  x,\xi,\eta\right)  \in T^{\ast}
\mathbb{T}^{d}\times \left(\la\Lambda\ra\setminus \{0\}\right)$ and $s\in\mathbb{R}$,
$$\displaylines{
\phi_{s}^{0}\left(  x,\xi,\eta\right)  :=\left(
x+sdH(\xi),\xi,\eta\right)  ,\cr \phi_{s}^{1}\left(
x,\xi,\eta\right)  :=\left(
x+sd^2H(\sigma(\xi))\cdot\frac{\eta}{|\eta|},\xi,\eta\right)
.\cr}$$
This second definition extends in an obvious way to $\eta\in \mathbb{S}\la\Lambda\ra$ (the sphere at infinity). On the other hand, the map
$\left(
x,\xi,\eta\right)\mapsto \phi_{s|\eta|}^{1}\left(
x,\xi,\eta\right)$ extends to $\eta=0$.

\begin{theorem}\label{mu^Lambda}
$\tilde{\mu}^{\Lambda}\left( t,\cdot\right)  $ is a positive
measure on $\T^{d}\times I_{\Lambda }\times
\overline{\la\Lambda\ra}$ supported on the sphere at infinity
$\mathbb{S}\la\Lambda\ra$ in the variable $\eta$. Besides, for
a.e. $t\in\mathbb{R}$, the measure $\tilde{\mu}^{\Lambda}\left(
t,\cdot\right)  $ satisfies the invariance properties:
\begin{equation}
\left(  \phi_{s}^{0}\right)  _{\ast}\tilde{\mu}^{\Lambda}\left(
t,\cdot\right)  =\tilde{\mu}^{\Lambda}\left(  t,\cdot\right)
,\quad\left( \phi_{s}^{1}\right)
_{\ast}\tilde{\mu}^{\Lambda}\left(  t,\cdot\right)
=\tilde{\mu}^{\Lambda}\left(  t,\cdot\right)  ,\quad
s\in\mathbb{R}.
\label{2minv}
\end{equation}
\end{theorem}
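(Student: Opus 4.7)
My plan is to prove the three assertions separately, using the exact transport
\[
w_h(t,x,\xi)=w^h_{u_h}(x-\tau_h t\,dH(\xi),\xi)
\]
(valid because $H$ depends only on $\xi$) together with the two algebraic identities $dH(\sigma(\xi))\cdot k=0$ for $k\in\Lambda$ (definition of $I_\Lambda$) and $\sigma(\xi+v)=\sigma(\xi)$ for $v\in\la\Lambda\ra$ (from the direct-sum decomposition in~(\ref{e:defF})).

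Support on $\mathbb{S}\la\Lambda\ra$ is immediate from the factor $(1-\chi(\eta/R))$ in~(\ref{def:a2}): any symbol $a\in\mathcal{S}^1_\Lambda$ of compact $\eta$-support yields $a_2\equiv 0$ for $R$ large, so $\tilde\mu^\Lambda$ depends only on $a_\mathrm{hom}$. Positivity follows from a sharp G\aa{}rding inequality in the two-microlocal calculus (semiclassical in $\xi$ at scale $h$, homogeneous in $\tilde\eta\in\la\Lambda\ra$ at scale $1/|\tilde\eta|$) together with the Calder\'on--Vaillancourt bound~(\ref{eq:CVeta}). For the $\phi_s^0$-invariance, classical Egorov applied to $\mathcal{S}^1_\Lambda$ gives
\[
\Op_h^\Lambda(a\circ\phi_s^0)=S_h^{-s}\Op_h^\Lambda(a)\,S_h^s+O_s(h),
\]
and the substitution $t\mapsto t+s/\tau_h$ in the $\varphi(t)\,dt$ average absorbs the $O(1/\tau_h)$ time shift by $L^1$-continuity of $\varphi$.

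The central step is the $\phi_s^1$-invariance. On the support of $a_2$ I Taylor-expand
\[
\tau_h t\,dH(\xi)=\tau_h t\,dH(\sigma(\xi))+t\,d^2H(\sigma(\xi))\tilde\eta+\tau_h^{-1}t\,O(|\tilde\eta|^2),\qquad \tilde\eta:=\tau_h\eta(\xi).
\]
Because $a_2$ has $x$-Fourier modes only in $\Lambda$ and $dH(\sigma(\xi))\in\la\Lambda\ra^\perp$, the leading term produces no $x$-shift. Writing $a_2=\sum_{k\in\Lambda}\widehat{a}_{2,k}(\xi,\tilde\eta)\,e^{ikx}/(2\pi)^{d/2}$, the linear-in-$\tilde\eta$ contribution multiplies the $k$-th Fourier coefficient by $e^{ikt\cdot d^2H(\sigma(\xi))\tilde\eta}$ (times an error phase coming from the quadratic remainder). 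Integration against $\varphi\in L^1(\R)\,dt$ produces $\widehat{\varphi}(-k\cdot d^2H(\sigma(\xi))\tilde\eta+o_h(1))$; at the sphere at infinity $\tilde\eta=|\tilde\eta|\hat\eta$ with $|\tilde\eta|\to\infty$, Riemann--Lebesgue annihilates every $k\in\Lambda$ with $k\cdot d^2H(\sigma)\hat\eta\ne 0$. Consequently the $-k$-th $x$-Fourier coefficient of $\tilde\mu^\Lambda$ is supported on $\{k\cdot d^2H(\sigma(\xi))\hat\eta=0\}$ for every $k\in\Lambda$, which by Fourier synthesis is exactly the invariance $(\phi_s^1)_*\tilde\mu^\Lambda=\tilde\mu^\Lambda$ for every $s\in\R$.

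The main obstacle is controlling the quadratic remainder $\tau_h^{-1}t\,O(|\tilde\eta|^2)$, whose magnitude on $\mathrm{supp}\,a_2$ can be as large as $t\tau_h\delta^2$ and so does not vanish with $h$ for fixed $\delta$. I plan to handle it by a stationary-phase argument exploiting that the remainder perturbs an already-rapidly-oscillating phase (and so does not create new stationary points away from $k\cdot d^2H(\sigma)\hat\eta=0$), combined with the successive limits $h\to 0$, $R\to\infty$, $\delta\to 0$ taken in this order; the cubic correction $O(|\eta(\xi)|^3)$ coming from higher terms in the Taylor expansion of $H$ is dealt with analogously.
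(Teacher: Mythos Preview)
Your treatment of positivity, support at infinity, and $\phi_s^0$-invariance matches the paper's outline. The substantive difference is in the $\phi_s^1$-invariance, and there your handling of the quadratic remainder has a gap.

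After the $t$-integration the frequency in $\widehat\varphi$ is exactly $\tau_h\,k\cdot dH(\xi)$; no approximation is needed to write it. The Taylor identity $dH(\xi)-dH(\sigma(\xi))=d^2H(\sigma)\eta(\xi)+G(\xi)[\eta(\xi),\eta(\xi)]$ then gives
\[
\tau_h\,k\cdot dH(\xi)=|\tilde\eta|\Bigl(k\cdot d^2H(\sigma)\hat\eta+O(|\eta(\xi)|)\Bigr)=|\tilde\eta|\bigl(k\cdot d^2H(\sigma)\hat\eta+O(\delta)\bigr),
\]
since $|\eta(\xi)|\le C\delta$ on $\mathrm{supp}\,a_2$. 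The remainder is therefore $O(\delta)$ \emph{relative} to the leading term, not large: for $\delta$ small compared with a lower bound on $|k\cdot d^2H(\sigma)\hat\eta|$ (on the support of the test symbol), the argument of $\widehat\varphi$ has modulus $\gtrsim|\tilde\eta|>R$, and Riemann--Lebesgue finishes as $R\to\infty$. There is no stationary-phase structure here---the $t$-phase is linear---so your proposed fix is not the right tool; the point you are missing is this relative-size estimate.

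The paper avoids this bookkeeping entirely by a time-shift trick. Instead of killing Fourier modes, it shows directly that $\langle\tilde\mu^\Lambda(t),a\circ\phi_s^1\rangle=\langle\tilde\mu^\Lambda(t),a\rangle$. On $\mathrm{supp}\,a_2$ the same Taylor expansion (applied at flow time $s/|\tilde\eta|$ rather than $t$) gives
\[
a_2\circ\phi_s^1=a_2\circ\phi^{1,h}_{s/|\tilde\eta|}+O(\delta),
\]
because the quadratic error becomes $\tfrac{s}{|\eta(\xi)|}G(\xi)[\eta(\xi),\eta(\xi)]=O(\delta)$. Since $\phi^{1,h}$ is a flow that commutes with the transport, composing with $\phi^{1,h}_{s/|\tilde\eta|}$ simply shifts $t\mapsto t-s/|\tilde\eta|$ inside $\int\varphi(t)\langle w_h(0),a_2\circ\phi_t^{1,h}\rangle\,dt$; the shift is $O(1/R)$ (because $|\tilde\eta|>R$) and disappears by $L^1$-continuity of $\varphi$. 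This is the same mechanism you used for $\phi_s^0$, run with a rescaled time, and it never introduces the absolute size $\tau_h\delta^2$ that misled you.
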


Note that this result holds whenever $\tau_h\ll 1/h$ or
$\tau_h=1/h$. This is in contrast with the situation we encounter
when dealing with $\tilde\mu_\Lambda(t,\cdot)$. The regularity of
this object indeed depends on the properties of the scale.

\begin{theorem}
\label{Thm Properties} (1) The distributions
$\tilde\mu_\Lambda(t,\cdot)$ are supported on $\T^d\times
I_\Lambda\times \langle \Lambda\rangle$ and are continuous with
respect to $t\in\R$. Moreover, they satisfy the following
propagation law:
\begin{equation}\label{eq:etafini}
\forall t\in\R,\qquad \tilde\mu_{\Lambda}(t,x,\xi,\eta)=(
\phi^1_{t|\eta|})_* \tilde\mu_{\Lambda}(0,x,\xi,\eta) .
\end{equation}
(2) If $\tau_h\ll1/h$ then $\tilde{\mu}_{\Lambda}\left(
t,\cdot\right)  $ is a positive measure. When $\tau_h=1/h$,
  the projection of $\tilde{\mu}_{\Lambda}\left(
t,\cdot\right)  $ on $T^{\ast}\mathbb{T}^{d}$ is a positive
measure, whose projection on $\mathbb{T}^{d}$ is absolutely
continuous with respect to the Lebesgue measure.
\end{theorem}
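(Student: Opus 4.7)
The support statements in part (1) are essentially built into the construction. Since $a_3=a\,\chi(\eta/R)$ is compactly supported in $\eta$, the limiting object $\tilde\mu_\Lambda$ lies in the finite part of $\overline{\langle\Lambda\rangle}$, hence is supported on $\langle\Lambda\rangle$. The test symbols $a\in\mathcal{S}_\Lambda^1$ have their $x$-Fourier modes in $\Lambda$ only. Finally, the identity (\ref{eq:a3decompose}) shows that $\xi$ may be replaced by $\sigma(\xi)$ at the symbol level modulo an $O(\tau_h^{-1})$ error, which forces $\tilde\mu_\Lambda(t,\cdot)$ to be concentrated on $\xi\in I_\Lambda$.

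To derive the propagation law I would differentiate $t\mapsto \langle u_h,S_h^{-\tau_h t}\Op_h^\Lambda(a_3)S_h^{\tau_h t}u_h\rangle$ in time using the Schr\"odinger equation. Semiclassical symbolic calculus in the variable $\xi$ gives
\[
\frac{d}{dt}\langle w_{I_\Lambda,h,R}(t),a\rangle
= \tau_h\langle w_h(t),\, dH(\xi)\cdot(\partial_x a)(x,\xi,\tau_h\eta(\xi))\chi\rangle + o(1).
\]
The key cancellation comes from Taylor-expanding $dH(\xi)=dH(\sigma(\xi))+d^2H(\sigma(\xi))\eta(\xi)+O(|\eta(\xi)|^2)$. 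The zeroth-order contribution vanishes identically: the Fourier series of $(\partial_x a)(x,\xi,\tau_h\eta(\xi))$ involves only modes $k\in\Lambda$, and $k\cdot dH(\sigma(\xi))=0$ for such $k$ as soon as $\sigma(\xi)\in I_\Lambda$. The linear term produces the bounded symbol $d^2H(\sigma(\xi))(\tau_h\eta(\xi))\cdot(\partial_x a)(x,\xi,\tau_h\eta(\xi))\chi$, whose quantization passes to the limit and contributes the transport $d^2H(\sigma(\xi))\eta\cdot\partial_x\tilde\mu_\Lambda$. The remainder is of size $\tau_h\cdot(R/\tau_h)^2=R^2/\tau_h\to 0$. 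Altogether one obtains
\[
\partial_t\tilde\mu_\Lambda + d^2H(\sigma(\xi))\eta\cdot\partial_x\tilde\mu_\Lambda = 0,
\]
whose solution is $(\phi^1_{t|\eta|})_\ast\tilde\mu_\Lambda(0,\cdot)$, and continuity in $t$ is then automatic from this first-order ODE.

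For part~(2), when $\tau_h\ll 1/h$ the operators $\Op_h^\Lambda(a)$ form a genuine semiclassical pseudodifferential calculus with small parameter $h\tau_h\to 0$, so a G\aa rding argument applies and forces $\tilde\mu_\Lambda$ to be a positive measure; this is the content of Remark~\ref{rem:gain}. In the critical case $\tau_h=1/h$ the G\aa rding estimate fails in the $\eta$-variable. However, testing $\tilde\mu_\Lambda$ against $\eta$-independent symbols $a(x,\xi)$ reduces $\Op_h^\Lambda(a)$ to the standard Weyl quantization $\Op_h(a)$, and the projection of $\tilde\mu_\Lambda$ onto $T^\ast\T^d$ is then a usual semiclassical defect measure, hence positive.

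The genuinely delicate point is absolute continuity of the $\T^d$-projection when $\tau_h=1/h$. By the propagation law, testing against $b\in C^\infty(\T^d)$ yields the push-forward of $\tilde\mu_\Lambda(0,\cdot)$ by $(x,\xi,\eta)\mapsto x+t\,d^2H(\sigma(\xi))\eta$; since $d^2H(\xi)$ is definite near $\xi_0\in I_\Lambda$, the restriction $d^2H(\sigma)|_{\langle\Lambda\rangle}$ is injective and the dispersive linear change of variables $\eta\mapsto t\,d^2H(\sigma(\xi))\eta$ smears $\tilde\mu_\Lambda(0,\cdot)$ in those directions. To turn this heuristic into a proof the cleanest route is to unveil the operator-valued nature of $\tilde\mu_\Lambda(0,\cdot)$: build a measure $\rho$ on $I_\Lambda\cap B(\xi_0,\varepsilon)$ with values in non-negative trace-class operators on $L^2(\T^d,\Lambda)$, satisfying $\int a\,d\tilde\mu_\Lambda(0,\cdot)=\int_{I_\Lambda}\operatorname{Tr}(m_{\langle a\rangle_\Lambda}(\sigma)\rho(d\sigma))$ for every $a\in\mathcal{S}_\Lambda^1$. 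Once this is in place, combining with the propagation law displays the $\T^d$-projection of $\tilde\mu_\Lambda(t,\cdot)$ as an integral in $\sigma$ of $\operatorname{Tr}(m_b\,U_t^\sigma\rho(d\sigma)(U_t^\sigma)^\ast)$, with $U_t^\sigma=e^{it\,d^2H(\sigma)D_y\cdot D_y/2}$; diagonalising $\rho(d\sigma)$ rewrites this as $\sum_n\lambda_n\int b(x)|U_t^\sigma\psi_n^\sigma(x)|^2dx$, which is manifestly absolutely continuous. I expect this construction of $\rho$, via the two-scale Wigner calculus in the spirit of \cite{Fermanian2micro, NierScat}, to be the main technical obstacle.
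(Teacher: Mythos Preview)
Your approach to part~(1) is correct and matches the paper closely. You differentiate in $t$ and invoke symbolic calculus, whereas the paper conjugates by the propagator using an Egorov-type lemma (Lemma~\ref{Lemma Inv}, proved by direct Fourier computation rather than abstract symbol calculus); the two routes are equivalent, and the key cancellation $k\cdot dH(\sigma(\xi))=0$ for $k\in\Lambda$ that you identify is exactly what the paper packages as Remark~\ref{r:phi1h}. Your Taylor remainder bound $R^2/\tau_h$ coincides with the paper's estimate (see~\eqref{def:bh}).

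There is a gap in your positivity argument for $\tau_h=1/h$. For $\eta$-independent $a$, the operator $\Op_h^\Lambda(a_3)$ is \emph{not} the standard $\Op_h(a)$: it equals $\Op_h\bigl(a(x,\xi)\chi(\eta(\xi)/(Rh))\bigr)$, and the cutoff $\chi(\eta(\xi)/(Rh))$ varies on the scale $Rh$ in $\xi$, so this is not an ordinary semiclassical symbol and the usual G\aa rding inequality does not apply directly. One can rescue the argument by factoring the cutoff as a Fourier multiplier and controlling the commutator with $\Op_h(a)$ up to $O(1/R)$, but this requires work you have not indicated. The paper sidesteps the issue entirely: it obtains positivity of the $T^{*}\T^{d}$-projection and absolute continuity of the $\T^{d}$-projection \emph{simultaneously}, both as corollaries of the operator-valued measure formula (Proposition~\ref{prop:opvame}). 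So the construction of your $\rho$ (the paper's $M$) is not merely the hard step for absolute continuity; it is also where positivity actually comes from in the critical case.

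On that construction itself, your outline is correct, and the paper carries it out via a concrete map $U_h: L^2(\T^d)\to L^2(\T^d,\Lambda)$ built from the local splitting $\xi=\sigma+\eta$ together with a phase correction $e^{i\{\sigma^\alpha/h\}\cdot y}$ needed to restore $(2\pi\Z)^d$-periodicity. The technical obstacle you anticipate is real and has a specific form on the torus: a global operator-valued measure can only be built for $\eta$-independent test symbols (see Remark~\ref{rem:holonomy}), which suffices for the theorem but means the two-microlocal picture here is genuinely more delicate than a direct transposition of the Euclidean constructions in~\cite{Fermanian2micro,NierScat}.
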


\begin{remark}
For $\tau_h=1/h$ the propagation law satisfied by distributions
$\tilde{\mu}_{\Lambda}\left( t,\cdot\right)$ can be interpreted in
terms of a Schr\"odinger flow type propagator. The precise
statement can be found in Proposition \ref{prop:opvame} in
Section~\ref{sec:mu_lambda}.
\end{remark}
\begin{remark}
\label{r:decRd}Note that for all $\xi\in\left(
\mathbb{R}^{d}\right)^*\setminus C_{H}$ (recall that
$C_{H}$ stands for the points where the Hessian
$d^{2}H\left(  \xi\right)  $ is not definite) we have $\mathbb{R}^{d}
=\Lambda^{\perp}\oplus d^{2}H\left(  \xi\right)  \left\langle
\Lambda \right\rangle $. Therefore, the flows $\phi_{s}^{0}$ and
$\phi_{s}^{1}$ are independent on $\mathbb{T}^{d}\times\left(
R_{\Lambda}\setminus C_{H}\right) \times\left\langle
\Lambda\right\rangle $.
\end{remark}
\begin{remark}
\label{r:nice} If $\operatorname*{rk}\Lambda=1$ then (\ref{2minv})
implies that, for a.e. $t\in\mathbb{R}$, and for any
$\nu\in\la\Lambda\ra$, the
measure $\tilde{\mu}^{\Lambda}\left(  t,\cdot\right)  \rceil_{\mathbb{T}
^{d}\times R_{\Lambda}\times\left\langle \Lambda\right\rangle }$
is invariant under
\[
(x,\sigma,\eta)\longmapsto(x+d^{2}H(\sigma)\cdot\nu,\sigma,\eta).
\]
On the other hand, the invariance by the Hamiltonian flow and
Proposition~\ref{prop:decomposition}, imply that
$\tilde{\mu}^{\Lambda}\left( t,\cdot\right)
\rceil_{\mathbb{T}^{d}\times R_{\Lambda}\times\left\langle
\Lambda\right\rangle }$ is also invariant under
\[
(x,\sigma,\eta)\longmapsto(x+v,\sigma,\eta)
\]
for every $v\in\Lambda^{\perp}$. Using Remark \ref{r:decRd} and
the fact that the Hessian
$d^{2}H\left(  \sigma\right)  $ is definite on the support of $\tilde{\mu}^{\Lambda}\left(
t,\cdot\right) \rceil_{\mathbb{T}^{d}\times
R_{\Lambda}\times\left\langle \Lambda \right\rangle }$, we conclude that the
measure $\tilde{\mu}^{\Lambda}\left(  t,\cdot\right)
\rceil_{\mathbb{T}^{d}\times R_{\Lambda}\times\left\langle \Lambda
\right\rangle }$ is constant in $x\in\mathbb{T}^{d}$ in this case.
\end{remark}

\begin{remark}\label{rem:decmuenL}
Consider the decomposition
$
\mu\left(  t,\cdot\right)
=\sum_{\Lambda\in\mathcal{L}}\mu^{\Lambda}\left( t,\cdot\right)
+\sum_{\Lambda\in\mathcal{L}}\mu_{\Lambda}\left( t,\cdot\right)  .
$
given by Proposition~\ref{p:decomposition}.
When $\tau_h=1/h$, Theorem \ref{Thm Properties} implies that the second term defines a
positive measure whose projection on $\mathbb{T}^{d}$ is
absolutely continuous with respect to the Lebesgue measure.
\end{remark}

We now give the proof of Theorems \ref{mu^Lambda} and \ref{Thm Properties}.

%%%%%%%%%%%%%%%%%%%%%%%%%%%%%%%%%%%%%%%%%%%%%%%%%%%%

\subsection{Invariance properties of
$\tilde{\mu}^{\Lambda}$}\label{sec:infini}

In this section, we prove Theorem~\ref{mu^Lambda}. The positivity
of $\tilde{\mu}^{\Lambda}\left(  t,\cdot\right)  $ can be deduced
following the lines of \cite{FermanianGerardCroisements} \S 2.1,
or those of the proof of Theorem~1 in \cite{GerardMDM91}; see also
the appendix of~\cite{AnantharamanMacia}.

Let us now check the invariance property (\ref{2minv}). We use the
following Lemma which gives approximate transport equations by the
flow $\phi_{s}^{0}$.

\begin{lemma}
\label{Lemma Inv}For every $a\in\mathcal{S}_{\Lambda}^{1}$ and
$\varphi \in{\mathcal{C}}_{0}^{\infty}(\R)$, we have
\[
\int_{\mathbb{R}}\varphi(t)\langle u_{h},S_{h}^{-\tau_{h}t}\
\Op_{h}^{\Lambda
}(a)S_{h}^{\tau_{h}t}\ u_{h}\rangle_{L^{2}(\T^{d})}dt=\int_{\mathbb{R}}%
\varphi(t)\langle u_{h}\;,\;\Op_{h}^{\Lambda}\left(  a\circ\phi_{\tau_{h}%
t}^{0}\right)  u_{h}\rangle_{L^{2}(\T^{d})}dt+o(1).
\]
\end{lemma}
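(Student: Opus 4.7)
The plan is to run a standard Egorov/Duhamel interpolation, relying crucially on the fact that $H(hD_x)$ is a Fourier multiplier and that $h\tau_h$ is bounded. First I would check that $a\circ\phi^0_{\tau_h s}$ still belongs to $\mathcal{S}^1_\Lambda$ (translation in $x$ preserves the $\eta$-homogeneity and the restriction that the only non-zero $x$-Fourier modes lie in $\Lambda$), so that $\Op_h^\Lambda\!\bigl(a\circ\phi^0_{\tau_h s}\bigr)$ is well defined for every $s$. Then set
$$C_h(s):=S_h^{-\tau_h(t-s)}\,\Op_h^\Lambda\!\bigl(a\circ\phi^0_{\tau_h s}\bigr)\,S_h^{\tau_h(t-s)},\qquad s\in[0,t],$$
so that $C_h(0)=S_h^{-\tau_h t}\Op_h^\Lambda(a)S_h^{\tau_h t}$ while $C_h(t)=\Op_h^\Lambda(a\circ\phi^0_{\tau_h t})$. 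The lemma reduces to showing $\|C_h(t)-C_h(0)\|_{\mathcal{L}(L^2)}=o(1)$ uniformly for $t$ in the compact support of $\varphi$.

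Differentiating and using the unitarity of $S_h^{\pm\tau_h(t-s)}$, one obtains
$$\|C_h'(s)\|_{\mathcal{L}(L^2)}=\bigl\|B_h'(s)-\tfrac{i\tau_h}{h}\bigl[H(hD_x),B_h(s)\bigr]\bigr\|_{\mathcal{L}(L^2)},\qquad B_h(s):=\Op_h^\Lambda\!\bigl(a\circ\phi^0_{\tau_h s}\bigr).$$
The Weyl symbol of $B_h(s)$ is $c_s(x,\xi):=a\bigl(x+\tau_h s\,dH(\xi),\xi,\tau_h\eta(\xi)\bigr)$, whose $s$-derivative equals $\tau_h\,dH(\xi)\cdot\partial_x c_s(x,\xi)$; hence $B_h'(s)=\tau_h\Op_h\!\bigl(dH\cdot\partial_x c_s\bigr)$. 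The decisive observation is that because $H$ depends on $\xi$ only, the Moyal expansion of the commutator pairs $\xi$-derivatives of $H$ with $x$-derivatives (and only $x$-derivatives) of $c_s$:
$$[H(hD_x),\Op_h(c_s)]=\tfrac{h}{i}\,\Op_h\!\bigl(dH\cdot\partial_x c_s\bigr)+h^3\,\Op_h(r_{h,s}),$$
where $r_{h,s}$ is a finite sum of products $\partial_\xi^\alpha H\cdot\partial_x^\alpha c_s$ with $|\alpha|\geq 3$ odd. Crucially, $\partial_x^\alpha c_s(x,\xi)=(\partial_x^\alpha a)(x+\tau_h s\,dH(\xi),\xi,\tau_h\eta(\xi))$ and its derivatives are uniformly bounded in $h$: no $\xi$-derivatives of $c_s$ appear, so the dangerous powers of $\tau_h$ coming from differentiating $\eta(\xi)\mapsto\tau_h\eta(\xi)$ are avoided.

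Multiplying by $i\tau_h/h$, the leading terms cancel exactly and
$$B_h'(s)-\tfrac{i\tau_h}{h}[H(hD_x),B_h(s)]=-h^2\tau_h\,\Op_h(r_{h,s})=O(h)$$
in $\mathcal{L}(L^2)$, by the Calder\'on--Vaillancourt estimate \eqref{eq:CVeta} applied to $r_{h,s}$ and the hypothesis $h\tau_h=O(1)$ (so $h^2\tau_h=h\cdot h\tau_h=O(h)$). Therefore $\|C_h'(s)\|_{\mathcal{L}(L^2)}=O(h)$ uniformly for $s,t\in\supp\varphi$, and integrating in $s$ yields $\|C_h(t)-C_h(0)\|_{\mathcal{L}(L^2)}=O(h|t|)$. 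Pairing with the normalised $u_h$ and integrating against $\varphi\in\mathcal{C}_c^\infty(\mathbb{R})$ then produces the announced $o(1)$ remainder. The only delicate point in carrying this out is controlling the Moyal remainder $r_{h,s}$ uniformly: this is exactly where the special structure $H=H(\xi)$ is used, and it is the same algebraic fact that makes the time scale $1/h$ natural in Theorem~\ref{t:precise}.
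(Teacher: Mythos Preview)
Your argument is correct and yields the same $O(h^{2}\tau_{h})=O(h)$ remainder as the paper, but the route is genuinely different. The paper works entirely in Fourier series: it expands $\int\varphi(t)\langle u_{h},S_{h}^{-\tau_{h}t}\Op_{h}^{\Lambda}(a)S_{h}^{\tau_{h}t}u_{h}\rangle\,dt$ as a double sum over $k,j\in\mathbb{Z}^{d}$ with weight $\widehat{\varphi}\bigl(\tau_{h}(H(hk)-H(hj))/h\bigr)$, Taylor-expands $H(hk)-H(hj)$ at the midpoint $h(k+j)/2$ to produce $h\,dH\bigl(h(k+j)/2\bigr)\cdot(k-j)+O(h^{3}|k-j|^{3})$, and then uses the Lipschitz regularity of $\widehat{\varphi}$ to absorb the cubic term into an explicit remainder symbol. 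This avoids any Moyal machinery and is tied to the torus/Fourier-multiplier structure. Your Duhamel--Egorov interpolation is more conceptual and in fact gives a pointwise-in-$t$ operator-norm bound $\|C_{h}(t)-C_{h}(0)\|_{\mathcal{L}(L^{2})}=O(h|t|)$, slightly stronger than what the lemma asserts after integration against~$\varphi$.

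Two small points to tighten. First, $r_{h,s}$ is a \emph{finite} sum of terms $\partial_{\xi}^{\alpha}H\cdot\partial_{x}^{\alpha}c_{s}$ only when $H$ is polynomial; for general smooth $H$ you need the standard integral remainder in the Moyal expansion, but its structure (only $\xi$-derivatives on $H$, only $x$-derivatives on $c_{s}$) is indeed the same. Second, the phrase ``the dangerous powers of $\tau_{h}$ are avoided'' is a little misleading: $\partial_{\xi}^{\beta}r_{h,s}$ \emph{does} pick up factors of $\tau_{h}$ (from differentiating both $\tau_{h}\eta(\xi)$ and $\tau_{h}s\,dH(\xi)$), but in the Calder\'on--Vaillancourt bound for $\Op_{h}$ each $\xi$-derivative is accompanied by a factor of $h$, so the relevant seminorms are controlled by powers of $h\tau_{h}(1+|s|)=O(1)$. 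This is exactly the mechanism behind~\eqref{eq:CVeta}; note however that $r_{h,s}$ is not literally of the form $b(x,\xi,\tau_{h}\eta(\xi))$ with $b\in\mathcal{S}_{\Lambda}^{1}$ independent of $h$ (there is the extra $h$-dependent shift $x\mapsto x+\tau_{h}s\,dH(\xi)$), so strictly speaking you are invoking the underlying Calder\'on--Vaillancourt theorem with the same bookkeeping rather than~\eqref{eq:CVeta} verbatim.
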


%\begin{remark}
%It will be clear from the proof that the result holds under the
%condition that $(h^2\tau_h)$ tends to zero (which is more general
%than (\ref{e:htaubdd})).
%\end{remark}
\begin{remark}
\label{r:phi1h}Consider the $h$-dependent flow
\[
\phi_{s}^{1,h}\left(  x,\xi,\eta\right)  :=\left(
x+s\tau_{h}\left( dH(\xi)-dH(\sigma(\xi))\right)  ,\xi,\eta\right)
,
\]
then if $a$ has only Fourier modes in $\Lambda$, we have
\begin{multline*}
\qquad\qquad a\circ\phi_{s}^{1,h}\left(  x,\xi,\eta\right)
=\sum_{k\in\Lambda}\widehat
{a}_{k}(\xi,\eta)\mathrm{e}^{ik\cdot\left(  x+{s\tau}_{h}\left(
dH(\xi)-dH(\sigma(\xi)\right)  \right)  }\\
=\sum_{k\in\Lambda}\widehat{a}_{k}(\xi,\eta)\mathrm{e}^{ik\cdot\left(
x+{s\tau}_{h}dH(\xi)\right)  }=a\circ\phi_{\tau_{h}s}^{0}\left(
x,\xi ,\eta\right)  .\qquad\qquad
\end{multline*}
This comes from the fact that for every $k\in\Lambda$ and
$\xi\in\R^{d}$, one has $k\cdot dH(\sigma(\xi))=0$.
\end{remark}

We postpone the proof of Lemma \ref{Lemma Inv} to the end of this
section and
we start proving~(\ref{2minv}). The invariance by the ``geodesic flow'' $\phi_s^0$ is standard and can be proved following the lines of the proof of property 3 in the appendix. Using~(\ref{def:a2}), we have
\begin{align}
\int_{\mathbb{R}}\varphi(t)\langle\tilde{\mu}^{\Lambda}(t,\cdot),a\rangle
dt &
=\lim_{\delta\rightarrow0}\lim_{R\rightarrow+\infty}\lim_{h\rightarrow
0}\int_{\mathbb{R}}\varphi(t)\left\langle
w_{h,R,\delta}^{I_{\Lambda}}\left(
t\right)  ,a\right\rangle dt\label{e:limmul}\\
&
=\lim_{\delta\rightarrow0}\lim_{R\rightarrow+\infty}\lim_{h\rightarrow
0}\int_{\mathbb{R}}\varphi(t)\langle w_{h}(t),a_{2}\left(  x,\xi,\tau_{h}
\eta(\xi)\right)  \rangle dt\nonumber
\end{align}
(along subsequences).
Notice that the symbol
\[
a_{2}\circ\phi_{s}^{1}(x,\xi,\eta)=a_{2}\left(
x+sd^{2}H(\sigma(\xi )){\frac{\eta}{|\eta|}},\xi,\eta\right) ,
\]
is a well-defined element of ${\mathcal{S}}_{\Lambda}^{1}$, since, for fixed $R$,
$a_{2}$ is identically equal to zero near $\eta=0$; moreover
\[
\forall\omega\in{\mathbb{S}}\la\Lambda\ra ,\;\;(a_{2}
\circ\phi_{s}^{1})_{\mathrm{hom}}(x,\xi,\omega)=a_{\mathrm{hom}}
(x+sd^{2}H(\sigma(\xi))\omega,\xi,\omega);
\]
therefore,%
\begin{equation}
\int_{\mathbb{R}}\varphi(t)\langle\tilde{\mu}^{\Lambda}(t,\cdot),a\circ
\phi_{s}^{1}\rangle dt=\lim_{\delta\rightarrow0}\lim_{R\rightarrow+\infty}
\lim_{h\rightarrow0}\int_{\mathbb{R}}\varphi(t)\left\langle
w_{h,R,\delta }^{I_{\Lambda}}\left(  t\right)
,a\circ\phi_{s}^{1}\right\rangle
dt.\label{eq:bs}%
\end{equation}
In order to relate (\ref{eq:bs}) to (\ref{e:limmul}) we note that the symbol:
\[
a_{2}\circ\phi_{s/|\eta|}^{1,h}\left( x,\xi,{\eta}\right)  ,
\]
satisfies:
\begin{eqnarray}
\nonumber
a_{2}\circ\phi_{s/|\eta|}^{1,h}\left( x,\xi,{\eta}\right)  &
= & a_{2}\left(
x+{\frac{s}{|\eta(\xi)|}}(dH(\xi)-dH(\sigma(\xi)),\xi,\tau_{h}{\eta(\xi
)}\right)  \\
\nonumber
&  = & a_{2}\left(  x+sd^{2}H(\sigma(\xi)){\frac{\eta(\xi)}{|\eta(\xi)|}}
+{\frac{s}{|\eta(\xi)|}}G(\xi)[\eta(\xi),\eta(\xi)],\xi,\tau_{h}{\eta(\xi
)}\right)  \\
&  = & \left(  a_{2}\circ\phi_{s}^{1}\right)  \left(
x,\xi,\tau_{h}\eta (\xi)\right)  +O(\delta);\label{e:1stwl}
\end{eqnarray}
we have used that, on $\operatorname*{supp}a_{2}$, we have
$|\eta(\xi)|\leq
C\delta$ and the function
\begin{equation}
G(\xi)=\int_{0}^{1}d^{3}H(\sigma(\xi)+t\eta(\xi))(1-t)dt,
\label{e:G}
\end{equation}
is uniformly bounded.
On the other hand, by Lemma~\ref{Lemma Inv} and Remark
\ref{r:phi1h}, we have
\begin{equation}
\left\langle w_{h,R,\delta}^{I_{\Lambda}}\left(  t\right)
,a\right\rangle =\left\langle w_{h}\left(  t\right)  ,a_{2}\left(
x,\xi,\tau_{h}\eta
(\xi)\right)  \right\rangle =\left\langle w_{h}\left(  0\right)  ,a_{2}
\circ\phi_{t}^{1,h}\left(  x,\xi,\tau_{h}\eta(\xi)\right)
\right\rangle
+o_{h}(1).\label{e:ta}
\end{equation}
Therefore, combining (\ref{e:1stwl}) and (\ref{e:ta}) we obtain:
\begin{equation}
\begin{split}
\int_{\mathbb{R}} & \varphi(t)\left\langle
w_{h,R,\delta}^{I_{\Lambda}}\left(
t\right)  ,a\circ\phi_{s}^{1}\right\rangle dt =\int_{\mathbb{R}}
\varphi(t)\la w_{h}\left(  0\right)
,a_2\circ \phi_{s}^{1}\circ\phi_{t}^{1,h}\left(
x,\xi,\tau_{h}\eta\left(  \xi\right)  \right)  \ra dt+O(\delta)+o_{h}(1)\\
&=\int_{\mathbb{R}}\varphi\left(  t-\frac{s}{|\eta|}\right)  \la
w_{h}\left(  0\right)  ,a_{2}\circ\phi_{t}^{1,h}\left(
x,\xi,\tau_{h}\eta (\xi)\right)  \ra dt+O(\delta)+o_{h}(1).
\end{split}\label{e:bs3}
\end{equation}
Since $|\eta|>R$ on the support of $a_{2}$, we have for all
$K\in\N$
\[
\lim_{R\rightarrow+\infty}\int_{\mathbb{R}}\sup_{x,\xi,\eta}\;\sup_{\alpha
\in\N^{3d},\,|\alpha|\leq K}\left\vert
\partial_{x,\xi,\eta}^{\alpha}\left[
\left(  \varphi(t-s/|\eta|)-\varphi(t)\right)  (a_{2}\circ\phi_{t}
^{1,h})\left(  x,\xi,\eta\right)  \right]  \right\vert dt=0,
\]
which implies, in view of (\ref{e:bs3}):
\begin{multline*}
\lim_{\delta\rightarrow0}\lim_{R\rightarrow+\infty}\lim_{h\rightarrow0}
\int_{\mathbb{R}}\varphi(t)\left\langle
w_{h,R,\delta}^{I_{\Lambda}}\left(
t\right)  ,a\circ\phi_{s}^{1}\right\rangle dt\\
=\lim_{\delta\rightarrow0}\lim_{R\rightarrow+\infty}\lim_{h\rightarrow0}
\int_{\mathbb{R}}\varphi(t)\la w_{h}\left(  0\right)
,a_{2}\circ\phi _{t}^{1,h}\left(  x,\xi,\tau_{h}\eta(\xi)\right)
\ra dt.
\end{multline*}
Applying again (\ref{e:ta}) to the left hand side of the above
identity concludes the proof of Theorem~\ref{mu^Lambda}.

Let us now prove Lemma~\ref{Lemma Inv}.

\begin{proof}
[Proof of Lemma ~\ref{Lemma Inv}.]Write
\begin{equation}
\begin{split}
\int_{\mathbb{R}}\varphi(t) &  \langle u_{h},S_{h}^{-\tau_{h}t}\ \Op_{h}
^{\Lambda}(a)S_{h}^{\tau_{h}t}\ u_{h}\rangle_{L^{2}(\T^{d})}dt\\
&
=\sum_{k,j\in\mathbb{Z}^{d},k-j\in\Lambda}\widehat{\varphi}\left(
\tau _{h}\frac{H\left(  hk\right)  -H\left(  hj\right)
}{h}\right)  \widehat {u_{h}}\left(  k\right)
\overline{\widehat{u_{h}}\left(  j\right)
}\,\widehat{a}_{j-k}\left(  {h\frac{k+j}{2}},{\tau}_{h}\eta\left(
h{\frac{k+j}{2}}\right)  \right)
\end{split}
\label{e:expli}
\end{equation}
where $a(x,\xi,\eta)=\sum_{k\in\Lambda}\widehat{a}_{k}(\xi,\eta)\mathrm{e}
^{ik\cdot x}$. Notice that,
\[
H\left(  hk\right)  -H\left(  hj\right)  =h\,dH\left(
h\frac{k+j}{2}\right) \cdot\left(  k-j\right)  +h^3r_{h}\left(h{k+j\over 2},k-j\right)  ,
\]
where  $r_{h}(\xi,\ell)$ satisfies, for every
$K\subset\mathbb{R}^{d}$ compact and convex and for every $\beta\in\N^d$,
\[
\left\vert \partial_\xi^\beta r_{h}\left(  \xi,\ell\right)  \right\vert \leq
C_{K,\beta}\left\vert \ell\right\vert ^{3},\quad \xi,h\ell\in K.
\]
Therefore, since $\widehat{\varphi}$ is uniformly Lipschitz, we
have, for $hk,hj\in K$
\[
 \widehat{\varphi}\left(  \tau_{h}\frac{H\left(
hk\right) -H\left(  hj\right)  }{h}\right)
-\widehat{\varphi}\left(  \tau_{h}dH\left( h\dfrac{k+j}{2}\right)
\cdot\left(  k-j\right)  \right) +h^2\tau_h
M_{h}\left(h{k+j\over 2} ,k-j\right)
\]
with
$$M_h(\xi,\ell)=r_h(\xi,\ell)\int_0^1\widehat \varphi '\left(\tau_hdH(\xi)\cdot \ell+sh^2\tau_h r_h(\xi,\ell)\right)ds.$$
Plugging this equation in \eqref{e:expli} we obtain:
\[
\displaylines{\int_{\mathbb{R}}\varphi(t)\langle
u_{h},S_{h}^{-\tau_ht}\ \Op_{h}^{\Lambda }(a)S_{h}^{\tau_ht}\
u_{h}\rangle_{L^{2}(\T^{d})} dt \hfill\cr\hfill =
\int_{\mathbb{R}} \varphi(t) \la u_h,
\Op^\Lambda_h\left(a\left(x+{\tau_ht} dH(\xi),\xi,\eta\right)
\right) u_h\ra_{L^2(\T^d)} dt +h^2\tau_h \langle u_h\;,\;\Op^\Lambda_h(R_{a}^{h})u_h\rangle_{L^2(\T^d)}, \cr}
\]
where the symbol $R_a^h(x,\xi,\eta)$ is characterized by its Fourier coefficients :
$$
\widehat{R_a^h}(\ell,\xi,\eta)= M_h(\xi,\ell)\widehat a_{\ell}(\xi,\eta).
 $$
 Since the function $a$ is compactly supported in $(x,\xi)$, we deduce from the properties of $M_h$ and $r_h$ that for all $\beta\in\N^d$, there exists $C_\beta>0$ such that
 $$\left| \partial_\xi^\beta R_h(x,h\xi,\tau_h\eta(h\xi))\right| \leq C_\beta,\;\;(x,\xi)\in T^*\T^d.$$
 Passing to the limit $h\rightarrow 0^+$ allows to conclude the proof of the Lemma.
\end{proof}

%%%%%%%%%%%%%%%%%%%%%%%%%%%%%%%%%%%%%%%%%%%%%%%%%%%%%%%%%%%%%%%%

\subsection{Propagation and regularity of $\tilde{\mu}_{\Lambda}$ }\label{sec:mu_lambda}

 This section is devoted to proving
Theorem~\ref{Thm Properties} and showing that, when $\tau_h=1/h$,
the distribution $\tilde{\mu}_{\Lambda}$
satisfies a propagation law that involves the family of unitary propagators%
\[
e^{-\frac{it}{2}d^{2}H(\sigma)D_{y}\cdot D_{y}},\quad\sigma\in
I_{\Lambda}.
\]
We start by proving Theorem~\ref{Thm Properties}(1). The statement
on the support of $\tilde{\mu}_{\Lambda}$ was already proved in
Section \ref{s:second}. The propagation law (and hence, the
continuity with respect to $t$) comes from the following result.
\begin{proposition}\label{prop:propagation}
For every $a\in\mathcal{S}_{\Lambda}^{1}$ and every
$t\in\mathbb{R}$ the
following holds:
\[
\left\langle w_{I_{\Lambda},h,R}\left(  t\right)  ,a\right\rangle
=\left\langle w_{I_{\Lambda},h,R}\left(  0\right)
,a\circ \tilde{\phi}_t^1\right\rangle +o_{h}\left(  1\right)  ,
\]
where
\begin{equation}
\tilde{\phi}_t^1\left(  x,\xi,\eta\right)  :=\left(
x+td^{2}H(\sigma(\xi))\eta,\xi ,\eta\right)  . \label{e:a_t}
\end{equation}

\end{proposition}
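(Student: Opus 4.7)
The identity to be proved is an Egorov-type statement. Set
\[A_h(t):=S_h^{-\tau_h t}\Op_h^\Lambda(a_3)S_h^{\tau_h t}\quad\text{and}\quad B_h(t):=\Op_h^\Lambda\bigl(a_3\circ\tilde\phi_t^1\bigr),\]
and observe that $a_3\circ\tilde\phi_t^1=(a\circ\tilde\phi_t^1)\chi(\eta/R)$, since $\tilde\phi_t^1$ fixes the $\eta$ coordinate. Pairing with $u_h$, the proposition is equivalent to $A_h(t)-B_h(t)=o_{\mathcal{L}(L^2)}(1)$ uniformly for $t$ in compact intervals. The strategy is to show that the deficit operator
\[\Psi_h(t):=\frac{i\tau_h}{h}\bigl[H(hD_x),B_h(t)\bigr]-B_h'(t)\]
tends to zero in operator norm, and then to integrate.

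The heart of the argument is the symbol computation for $\Psi_h(t)$. For any $b\in\mathcal{S}_\Lambda^1$, the Weyl commutator expansion and the bound \eqref{eq:CVeta} yield
\[\frac{i}{h}\bigl[H(hD_x),\Op_h^\Lambda(b)\bigr]=\Op_h\bigl(dH(\xi)\cdot\partial_x b(x,\xi,\tau_h\eta(\xi))\bigr)+O_{\mathcal{L}(L^2)}(h^2),\]
where the $h^2$ remainder, after multiplication by $\tau_h$, becomes $O(h\cdot h\tau_h)=o(1)$ thanks to hypothesis \eqref{e:htaubdd}. The decisive algebraic step uses the Fourier support of $b$: writing $b(x,\xi,\eta)=\sum_{k\in\Lambda}\widehat b_k(\xi,\eta)e^{ikx}/(2\pi)^{d/2}$, one has $dH(\sigma(\xi))\in\Lambda^\perp$ by definition \eqref{def:ILambda} of $I_\Lambda$, so $dH(\sigma(\xi))\cdot\partial_x b=0$. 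Combining with the Taylor expansion
\[dH(\xi)-dH(\sigma(\xi))=d^2H(\sigma(\xi))\eta(\xi)+G(\xi)[\eta(\xi),\eta(\xi)]\]
with $G$ uniformly bounded as in \eqref{e:G}, one obtains
\[\tau_h\,dH(\xi)\cdot\partial_x b=d^2H(\sigma(\xi))(\tau_h\eta(\xi))\cdot\partial_x b+\tau_h\,G(\xi)[\eta(\xi),\eta(\xi)]\cdot\partial_x b.\]
On the support of $\chi(\tau_h\eta(\xi)/R)$, $\tau_h|\eta(\xi)|\le CR$; hence $\tau_h|\eta(\xi)|^2\le CR/\tau_h=o(1)$, so the quadratic remainder is $o(1)$ in $\mathcal{L}(L^2)$. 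Applied to $b=a_3\circ\tilde\phi_t^1$ (which still has Fourier modes in $\Lambda$ and the same compact $\eta$-support, since $\tilde\phi_t^1$ only shifts $x$ by a vector depending on $\sigma(\xi)$ and $\eta$), the surviving principal symbol equals
\[d^2H(\sigma(\xi))(\tau_h\eta(\xi))\cdot(\partial_x a_3)\circ\tilde\phi_t^1\bigl(x,\xi,\tau_h\eta(\xi)\bigr)=\partial_t(a_3\circ\tilde\phi_t^1)\bigl(x,\xi,\tau_h\eta(\xi)\bigr),\]
so $\Psi_h(t)=o_{\mathcal{L}(L^2)}(1)$ uniformly for $t$ in compact intervals.

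To conclude, note that $A_h'(t)=\frac{i\tau_h}{h}[H(hD_x),A_h(t)]$ by Heisenberg's equation, whence $D_h(t):=A_h(t)-B_h(t)$ satisfies $D_h(0)=0$ and $D_h'(t)=\frac{i\tau_h}{h}[H(hD_x),D_h(t)]-\Psi_h(t)$. Setting $\widetilde D_h(t):=S_h^{\tau_h t}D_h(t)S_h^{-\tau_h t}$ telescopes the commutator term to give
\[\widetilde D_h(t)=-\int_0^t S_h^{\tau_h s}\Psi_h(s)S_h^{-\tau_h s}\,ds,\]
and unitarity of the propagators yields $\|D_h(t)\|_{\mathcal{L}(L^2)}=\|\widetilde D_h(t)\|_{\mathcal{L}(L^2)}\le |t|\cdot o(1)=o(1)$ on compact time intervals, proving the proposition.

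The main technical points are (i) justifying the commutator expansion for the $h$-dependent two-microlocal symbol $b$, which is exactly what \eqref{eq:CVeta} is designed for once one observes that each $\xi$-derivative of $b(x,\xi,\tau_h\eta(\xi))$ costs at most a factor $\tau_h$, compensated by the hypothesis $h\tau_h=O(1)$; and (ii) uniformity of the $o(1)$ estimates in $t$, which holds because $\tilde\phi_t^1$ preserves both the $\xi$- and $\eta$-supports of $a_3$, and on $\operatorname{supp}(a_3)$ the translation $t\,d^2H(\sigma(\xi))\eta$ remains bounded for $t$ in any fixed compact interval, so finitely many seminorms of $a_3\circ\tilde\phi_t^1$ stay bounded uniformly.
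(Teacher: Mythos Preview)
Your proof is correct. The approach is essentially the same Egorov-type argument as in the paper, but organized differently: the paper first applies Lemma~\ref{Lemma Inv} (the exact Egorov identity proved via the explicit Fourier representation) together with Remark~\ref{r:phi1h} to obtain
\[
\left\langle w_{I_{\Lambda},h,R}(t),a\right\rangle
=\left\langle w_h(0),\,a_3\circ\phi_t^{1,h}\bigl(x,\xi,\tau_h\eta(\xi)\bigr)\right\rangle+o_h(1),
\]
and then Taylor-expands the integrated flow $\phi_t^{1,h}$ around $\sigma(\xi)$ at the symbol level, yielding $a_3\circ\tilde\phi_t^1+O(\tau_h^{-1})$. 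You instead work infinitesimally: you compare the Heisenberg generator $\frac{i\tau_h}{h}[H(hD_x),\cdot]$ with $\partial_t$ acting on the candidate symbol, use the same two ingredients (Fourier modes in $\Lambda$ kill $dH(\sigma(\xi))\cdot\partial_x$, and the compact $\eta$-support of $a_3$ controls the quadratic Taylor remainder), and then integrate via Duhamel. Both routes are standard; the paper's is a bit shorter because Lemma~\ref{Lemma Inv} is already available, while yours is self-contained. One tiny slip: on $\operatorname{supp}\chi(\tau_h\eta(\xi)/R)$ you get $\tau_h|\eta(\xi)|^2\le C R^2/\tau_h$, not $CR/\tau_h$; this is inconsequential since $R$ is fixed before $h\to0^+$.
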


\begin{proof}
We use Lemma~\ref{Lemma Inv} and Remark \ref{r:phi1h} to
conclude~:
\[
\left\langle w_{I_{\Lambda},h,R}\left(  t\right)  ,a\right\rangle
=\left\langle w_{h}\left(  0\right)
,a_{3}\circ\phi_{t}^{1,h}(x,\xi,\tau _{h}\eta(\xi))\right\rangle
+o_{h}(1),
\]
where $a_3$ is defined by (\ref{def:a3}). By definition of
$\phi_{t}^{1,h}$ and by Taylor expansion, we obtain
\begin{align*}
a_{3}\circ\phi_{t}^{1,h}\left(  x,\xi,\tau_{h}{\eta(\xi)}\right)
& =a_{3}\left(
x+{t\tau}_{h}(dH(\xi)-dH(\sigma(\xi)),\xi,\tau_{h}{\eta(\xi
)}\right)  \\
&  =a_{3}\left(  x+td^{2}H(\sigma(\xi))\tau_{h}{\eta(\xi)}+{t\tau}_{h}
G(\xi)[\eta(\xi),\eta(\xi)],\xi,\tau_{h}{\eta(\xi)}\right)  \\
&  =:b_{h}\left(  t,x,\xi,\tau_{h}{\eta(\xi)}\right),
\end{align*}
where $G$ is defined by (\ref{e:G}) and is bounded and smooth on
the support of $a_{3}$. Therefore,
\begin{align}
\label{def:bh}
b_h(t,x,\xi,\eta)= a_3 \left( x+td^2H(\sigma(\xi))\eta
+\tau_h^{-1} t G(\xi)[\eta,\eta],\xi,\eta\right)
\\
\nonumber = a_3\left(
x+td^2H(\sigma(\xi))\eta,\xi,\eta\right) +O(\tau_h^{-1}),
\end{align}
from which the result follows.
\end{proof}

Let us now focus on statement (2) of Theorem \ref{Thm Properties}. The result concerning time scales $\tau_h\ll1/h$ was already discussed in Remark \ref{r:positiv}.

From now on, suppose $\tau_h=1/h$. Let us introduce some notations. We consider the set $L^{2}
(\T^{d},\Lambda)\subseteq L^{2}(\T^{d})$ consisting of functions
having only Fourier modes in $\Lambda$. For any
$a\in{\mathcal{S}}_{\Lambda}^{1}$, and for any $\sigma\in
I_{\Lambda}$, we can associate the operator $\Op_{1}(a_{\sigma
}(y,\eta))$ which acts on $L^{2}(\T^{d},\Lambda)$, defined as
follows. For $\Phi\in L^{2}(\T^{d},\Lambda)$,
\[
\Op_{1}(a_{\sigma}(y,\eta))\Phi=\sum_{\lambda,\upsilon\in\Lambda}\widehat
{a}_{\lambda}\left(  \sigma,\upsilon+\frac{\lambda}{2}\right)
\widehat{\Phi
}\left(  \upsilon\right)  {e^{i(\upsilon+\lambda)y},}
\]
where
$a(x,\xi,\eta)=\sum_{k\in\Lambda}\widehat{a}_{k}(\xi,\eta)\frac
{\mathrm{e}^{ik\cdot x}}{(2\pi)^{d/2}}$. Thus
$\Op_{1}(a_{\sigma}(y,\eta))$ is the Weyl quantization of the
symbol $a_{\sigma}(y,\eta):=a(y,\sigma,\eta)$. In particular, for
$a=a(x,\xi)$ independent of the variable $\eta$,
$\Op_{1}(a_{\sigma}(y))$ is the multiplication operator
\begin{equation}\label{def:asigma}
\Op_{1}(a_{\sigma}(y))\Phi=a(y,\sigma)\Phi
\end{equation}
(we will simply denote by $a_{\sigma}(y)$ this multiplication
operator). We have the expression similar to formula
\eqref{e:Weylq} in the appendix,
\[
\la\Phi,\Op_{1}(a_{\sigma}(y,\eta))\Phi\ra_{L^{2}(\T^{d}, \Lambda)}=\frac{1}
{(2\pi)^{d/2}}\sum_{\upsilon,\upsilon^{\prime}\in\Lambda}\widehat{a}
_{\upsilon^{\prime}-\upsilon}\left(  \sigma,\frac{\upsilon+\upsilon^{\prime}
}{2}\right)  \widehat{\Phi}\left(  \upsilon\right)  \overline{{\widehat{\Phi}
}\left(  \upsilon^{\prime}\right)  }.
\]
Then, the last statement of Theorem~\ref{Thm Properties}(2) is a
consequence of the following
Proposition.\footnote{Recall that given a Hilbert space $H$, $\mathcal{L}
^{1}\left(  H\right)  $ stands for the space of bounded
trace-class operators acting on $H$ and, for a Polish space $X$,
$\mathcal{M}_{+}\left( X;\mathcal{L}^{1}\left(  H\right)  \right)
$ denotes the set of positive measures taking values on
$\mathcal{L}^{1}\left(  H\right)  $.}

\begin{proposition}
\label{prop:opvame}There exists $M\in\cM_{+}\left(  I_{\Lambda}
;\cL^{1}\left(  L^{2}(\T^{d},\Lambda)\right)  \right)  $ such that
for all $a\in{\mathcal{C}}_{0}^{\infty}(T^{\ast}\T^{d})$ with
Fourier modes in $\Lambda$ and all $\varphi\in L^{1}(\R)$,
\[
\int_{\mathbb{R}}\varphi\left(  t\right)
\langle\tilde{\mu}_{\Lambda}\left(
t,\cdot\right)  ,a\rangle dt=\int_{\R}\varphi(t)\int_{I_{\Lambda}}
\mathrm{Tr}\left(  e^{-\frac{it}{2}d^{2}H(\sigma)D_{y}\cdot
D_{y}}a_{\sigma }(y)e^{\frac{it}{2}d^{2}H(\sigma)D_{y}\cdot
D_{y}}M(d\sigma)\right)  dt.
\]

\end{proposition}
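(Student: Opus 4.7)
The plan is to exhibit $M$ as the operator-valued measure that represents $\tilde{\mu}_{\Lambda}(0,\cdot)$ against symbols with Fourier modes in $\Lambda$, and then to use Proposition~\ref{prop:propagation} together with the \emph{exact} Egorov formula for the quadratic Hamiltonian $Q_{\sigma}(\eta):=\tfrac{1}{2}d^{2}H(\sigma)\eta\cdot\eta$ to transport $M$ in time. To construct $M$, I start from~\eqref{subL} and expand $a\in\mathcal{S}_{\Lambda}^{1}$ in its $x$-Fourier series indexed by $k\in\Lambda$, then decompose each relevant frequency $hn\in B(\xi_{0},\eps)$ via the map~\eqref{e:defF} as $hn=\sigma(hn)+h\zeta$ with $\sigma(hn)\in I_{\Lambda}$ and $\zeta=\eta(hn)/h$ a lattice-type variable in $\langle\Lambda\rangle$ of order one; this is precisely where the critical scaling $\tau_{h}=1/h$ enters. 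Since only differences $k-j\in\Lambda$ survive, the bilinear expression in $\widehat{u_{h}}$ organizes itself as a Riemann-type sum, indexed by $\sigma\in I_{\Lambda}$, of traces $\mathrm{Tr}(\mathrm{Op}_{1}(a_{\sigma}(y,\eta))\,m_{h}^{\sigma})$, where each $m_{h}^{\sigma}$ is a nonnegative finite-rank operator on $L^{2}(\T^{d},\Lambda)$ built from the Fourier coefficients of $u_{h}$ in the $\sigma$-slice, with $\sum_{\sigma}\mathrm{Tr}\,m_{h}^{\sigma}\leq\|u_{h}\|_{L^{2}}^{2}=1$. Extracting a subsequence and letting $h\to 0$ then $R\to\infty$ produces a positive $\mathcal{L}^{1}(L^{2}(\T^{d},\Lambda))$-valued measure $M$ on $I_{\Lambda}$ with
\[
\langle\tilde{\mu}_{\Lambda}(0,\cdot),a\rangle=\int_{I_{\Lambda}}\mathrm{Tr}\bigl(\mathrm{Op}_{1}(a_{\sigma})\,M(d\sigma)\bigr)\qquad\text{for every }a\in\mathcal{S}_{\Lambda}^{1}.
\]

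Next, I invoke the exact Egorov identity for the propagator associated with $Q_{\sigma}$. Its Hamiltonian flow $\Phi_{t}^{\sigma}(y,\eta)=(y+td^{2}H(\sigma)\eta,\eta)$ is linear and symplectic, so the metaplectic calculus yields, with no remainder,
\[
e^{-(it/2)d^{2}H(\sigma)D_{y}\cdot D_{y}}\,\mathrm{Op}_{1}(b)\,e^{(it/2)d^{2}H(\sigma)D_{y}\cdot D_{y}}=\mathrm{Op}_{1}(b\circ\Phi^{\sigma}_{-t}).
\]
Specializing $b$ to the multiplication symbol $a_{\sigma}(y)$ produces the operator with Weyl symbol $a_{\sigma}(y-td^{2}H(\sigma)\eta)$, and cyclicity of the trace turns the conjugation on $a_\sigma$ into the equivalent conjugation on $M(d\sigma)$ by the unitaries $e^{\pm(it/2)d^{2}H(\sigma)D_{y}\cdot D_{y}}$.

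To conclude, for $a\in\mathcal{C}_{c}^{\infty}(T^{\ast}\T^{d})$ with Fourier modes in $\Lambda$, viewed as an $\eta$-independent element of $\mathcal{S}_{\Lambda}^{1}$, Proposition~\ref{prop:propagation} gives $\langle\tilde{\mu}_{\Lambda}(t,\cdot),a\rangle=\langle\tilde{\mu}_{\Lambda}(0,\cdot),a\circ\tilde{\phi}_{t}^{1}\rangle$, and $(a\circ\tilde{\phi}_{t}^{1})_{\sigma}(y,\eta)=a_{\sigma}\circ\Phi_{t}^{\sigma}(y,\eta)$. Inserting this into the representation from the first step and applying the exact Egorov identity (with the appropriate orientation of time) together with cyclicity of the trace yields the claimed formula pointwise in $t$; pairing with $\varphi\in L^{1}(\R)$ and Fubini complete the argument.

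The main obstacle is the first step: identifying the limit of the bilinear Wigner expression as the trace of $\mathrm{Op}_{1}(a_{\sigma})$ against a positive operator-valued measure on $I_{\Lambda}$. The two decisive features that make this possible are the restriction of the Fourier modes of $a$ to $\Lambda$, which forces a factorization of the bilinear form along the transverse direction $I_{\Lambda}$, and the critical scaling $\tau_{h}=1/h$, which turns $\tau_{h}\eta(\xi)$ into a lattice variable of order one and makes $\mathrm{Op}_{1}$ rather than $\mathrm{Op}_{h}$ the natural quantization along the $\Lambda$-directions. Positivity of $M$ and the uniform trace bound follow from the representation of $m_{h}^{\sigma}$ as a sum of rank-one projections built from slices of $u_{h}$, together with Plancherel.
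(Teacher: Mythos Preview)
Your overall strategy coincides with the paper's: build an operator-valued measure from the initial data by slicing the Fourier expansion along $I_\Lambda$, then use Proposition~\ref{prop:propagation} together with the exact Egorov identity for the quadratic Hamiltonian $\tfrac12 d^2H(\sigma)\eta\cdot\eta$. The paper carries this out via an explicit map $U_h:L^2(\T^d)\to L^2(\T^d,\Lambda)$ (its ``First step''), then expresses $w_{I_\Lambda,h,R}(t)$ through $U_h$ and the propagators $e^{\pm\frac{it}{2}d^2H(\sigma)D_y\cdot D_y}$, and finally passes to the limit.

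There is, however, a real gap in your first step. You write $hn=\sigma(hn)+h\zeta$ with ``$\zeta=\eta(hn)/h$ a lattice-type variable in $\langle\Lambda\rangle$'', and then treat $m_h^\sigma$ as an operator on $L^2(\T^d,\Lambda)$ paired against $\Op_1(a_\sigma)$. But $\zeta$ is \emph{not} in $\Lambda$: for each $\sigma\in\sigma(h\Z^d)$ the variable $\eta(hn)/h$ ranges over an $h$-dependent coset of $\Lambda$ in $\langle\Lambda\rangle$. Any identification of that coset with $\Lambda$ (this is exactly what the paper's phase $e^{i\{\sigma^\alpha/h\}\cdot y}$ in $U_h$ accomplishes) introduces the shift $\eta\mapsto\eta-\{\sigma^\alpha/h\}$ in the symbol; see Lemma~\ref{lem:16}. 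Your displayed identity
\[
\langle\tilde\mu_\Lambda(0,\cdot),a\rangle=\int_{I_\Lambda}\Tr\bigl(\Op_1(a_\sigma)\,M(d\sigma)\bigr)\qquad\text{for every }a\in\mathcal{S}^1_\Lambda
\]
is therefore too strong: precisely because the shift $\{\sigma^\alpha/h\}$ does not converge as $h\to 0$, one cannot obtain a global $M$ that represents $\tilde\mu_\Lambda(0,\cdot)$ against $\eta$-dependent symbols. The paper says this explicitly in Remark~\ref{rem:holonomy}.

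This matters for your argument, since you then apply the representation to $a\circ\tilde\phi_t^1$, which depends on $\eta$ (linearly, through $y+td^2H(\sigma)\eta$). Under the unavoidable shift, $(a\circ\tilde\phi_t^1)_\sigma(y,\eta-\{\sigma^\alpha/h\})$ carries an extra translation of $y$ by $-t\,d^2H(\sigma)\{\sigma^\alpha/h\}$, bounded but $h$-oscillatory, which obstructs taking the limit. The paper's fix is to stay at finite $h$, apply exact Egorov \emph{before} passing to the limit so that the operator inside is $\Op_1\!\bigl(a_\sigma(y)\,\chi((\eta-\{\sigma^\alpha/h\})/R)\bigr)$ with $a$ now $\eta$-independent, and only then observe that the residual shift affects solely the cutoff $\chi$ and is $O(1/R)$. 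Your route can be repaired along the same lines, but that repair \emph{is} the paper's construction of $U_h$ and its Lemmas~\ref{lem:plancherel}--\ref{lem:16}; it is not a detail one can omit.
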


\begin{remark}
(i) The operator-valued measure $M$ is globally defined, it
describes the limit of $\langle w_{I_{\Lambda},h,R}(t),a\rangle$
for symbols $a=a(x,\xi)$. For symbols
$a=a(x,\xi,\eta)\in{\mathcal{S}}_{\Lambda}^{1}$,  one cannot build such a global measure (see
Remark~\ref{rem:holonomy} which emphasizes the technical obstruction).\newline
(ii) Note that for any given $\sigma$, the operator $e^{\frac{it}{2}
d^{2}H(\sigma)D_{y}\cdot D_{y}}$ obviously preserves
$L^{2}(\T^{d},\Lambda)$.
\end{remark}

The proof of Proposition~\ref{prop:opvame} relies on three steps:

\begin{enumerate}
\item We first define an operator $U_{h}$ which maps
$(2\pi\Z^{d})$-periodic functions on $(2\pi\Z^{d})$-periodic
functions with Fourier frequencies only in $\Lambda$.

\item Then, we express $w_{I_{\Lambda},h,R}(t)$ in terms of
$U_{h}$ and the operators $e^{\frac{it}{2}d^{2}H(\sigma)D_{y}\cdot
D_{y}}$.

\item We then conclude by passing to the limit when
$h\rightarrow0$ and $R\rightarrow+\infty$.
\end{enumerate}

\paragraph{\emph{First Step: Construction of the operator $U_{h}$}}

We introduce an auxiliary lattice $\tilde{\Lambda}\subset\Z^{d}$
such that $\Lambda^{\perp}\oplus\tilde{\Lambda}=\Z^{d}$ (recall that $\Lambda^{\perp}$ is the orthogonal of $\Lambda$ in the duality sense). We denote
by $\alpha$ the projection on $\la\tilde{\Lambda}\ra$, in the
direction of $\Lambda^{\perp}$. We have
$\alpha(\Z^{d})=\tilde{\Lambda}\subset\Z^{d}$. For $\sigma\in
(\R^{d})^{\ast}$, we shall denote by
$\sigma^{\alpha}\in\la\Lambda\ra$ the linear form
$\sigma^{\alpha}(y)=\sigma\cdot\alpha(y)$. We fix a bounded
fundamental domain $D_{\Lambda}$ for the action of $\Lambda$ on
$\la\Lambda \ra$. For~$\eta\in\la\Lambda\ra$, there is a unique
$\{\eta\}\in D_{\Lambda}$ (the \textquotedblleft fractional
part\textquotedblright\ of $\eta$) such that
$\eta-\{\eta\}\in\Lambda$. Finally, take
$b\in{\mathcal{C}}_{0}^{\infty }((\R^{d})^{\ast})$ supported in
the ball $B(\xi_{0},\eps)\subset (\R^{d})^{\ast}$, and identically
equal to~$1$ on $B(\xi_{0},\eps/2)$. We set for $f\in
L^{2}(\T^{d})$, $\sigma\in I_{\Lambda}$, $y\in\T^{d}$,
\begin{align*}
U_{h}f(\sigma,y)  &  =(2\pi)^{-\frac{d}{2}}e^{i\{\frac{\sigma^{\alpha}}
{h}\}\cdot
y}\int_{x\in\T^{d}}f(x)\sum_{\eta\in\la\Lambda\ra,\,\left(
\sigma,\eta\right)  \in F\left(  h\Z^{d}\right)
}b(\sigma+\eta)e^{{\frac
{i}{h}}\eta\cdot y}e^{-\frac{i}{h}(\sigma+\eta)\cdot x}dx\\
&  =(2\pi)^{-\frac{d}{2}}e^{i\{\frac{\sigma^{\alpha}}{h}\}\cdot
y}\sum _{\eta\in\la\Lambda\ra,\,\left(  \sigma,\eta\right)  \in
F\left(
h\Z^{d}\right)  }b(\sigma+\eta)\widehat{f}\left(  {\frac{\sigma+\eta}{h}
}\right)  e^{{\frac{i}{h}}\eta\cdot y}.
\end{align*}
Recall that $F$ is the local coordinate system defined in
(\ref{e:defF}), with the property that if $F\left(  \xi\right)
=\left(  \sigma\left(  \xi\right) ,\eta\left(  \xi\right)  \right)
$ then $\xi=\sigma\left(  \xi\right) +\eta\left(  \xi\right)  $.
Note that $U_{h}f(\sigma,y)=0$ if $\left( \sigma,\eta\right)
\notin F(h\Z^{d})$ for every $\eta\in\left\langle
\Lambda\right\rangle $ (since the sum has an empty index set).
The role of the term
$e^{i\{\frac{\sigma^{\alpha}}{h}\}\cdot y}$ becomes clear in the
following lemma.

\begin{lemma}
If $f$ is $(2\pi\Z)^{d}$-periodic, then $U_{h} f$ is
$(2\pi\Z)^{d}$-periodic and has only frequencies in $\Lambda$.
Therefore, $U_{h}$ maps $L^{2}(\T^{d})$ into the subspace
$L^{2}(\T^{d},\Lambda)$ of $L^{2}(\T^{d})$.
\end{lemma}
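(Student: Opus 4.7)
The plan is to compute directly the Fourier frequencies (in the $y$ variable) of $U_h f(\sigma,y)$ and check that each one lies in $\Lambda$; periodicity will then be automatic since $\Lambda\subseteq(\Z^d)^*$. The first step is to describe the set of $\eta\in\la\Lambda\ra$ entering the sum. If $(\sigma,\eta_0)\in F(h\Z^d)$ for some $\eta_0$ (otherwise $U_h f(\sigma,\cdot)\equiv 0$ and there is nothing to prove), write $\sigma+\eta_0=hk_0$ with $k_0\in(\Z^d)^*$. Any other admissible $\eta$ satisfies $\eta-\eta_0\in\la\Lambda\ra\cap h(\Z^d)^*$, which by primitivity of $\Lambda$ equals $h\Lambda$. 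Hence every $\eta$ in the sum has the form $\eta=\eta_0+h\lambda$ with $\lambda\in\Lambda$. Writing also $\{\sigma^\alpha/h\}=\sigma^\alpha/h-p$ with $p\in\Lambda$, the total frequency of the exponential $e^{i\{\sigma^\alpha/h\}\cdot y}e^{i\eta\cdot y/h}$ appearing in $U_h f(\sigma,y)$ equals
\[
\{\sigma^\alpha/h\}+\eta/h=\frac{\sigma^\alpha-\sigma}{h}+(k_0+\lambda-p).
\]

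The second step is to show that each such frequency lies in $\Lambda$. Since $\sigma^\alpha\in\la\Lambda\ra$, $p\in\Lambda\subset\la\Lambda\ra$ and $\eta/h\in\la\Lambda\ra$, the sum is automatically in $\la\Lambda\ra$; by primitivity it remains to prove integrality, namely $(\sigma^\alpha-\sigma)/h\in(\Z^d)^*$ (the other term $k_0+\lambda-p$ already belongs to $(\Z^d)^*$). To see this I use the direct sum $\Z^d=(\Lambda^\perp\cap\Z^d)\oplus\tilde\Lambda$ and check integrality of the pairing $(\sigma-\sigma^\alpha)/h\cdot n$ on each factor. On $\tilde\Lambda$ this pairing vanishes since, by construction, $\sigma-\sigma^\alpha\in\mathrm{Ann}(\la\tilde\Lambda\ra)$ (the defining relation $\sigma^\alpha\cdot y=\sigma\cdot\alpha(y)$ forces $\sigma-\sigma^\alpha$ to annihilate $\la\tilde\Lambda\ra$). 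On $\Lambda^\perp\cap\Z^d$ I use the original quantization constraint $\sigma+\eta_0\in h(\Z^d)^*$: for $n\in\Lambda^\perp\cap\Z^d$, one has $\eta_0\cdot n=0$ and $\sigma^\alpha\cdot n=0$, so $(\sigma-\sigma^\alpha)\cdot n/h=\sigma\cdot n/h=(\sigma+\eta_0)\cdot n/h\in\Z$. Hence $(\sigma-\sigma^\alpha)/h$ pairs integrally with every $n\in\Z^d$, which is exactly $(\sigma-\sigma^\alpha)/h\in(\Z^d)^*$. Combined with the previous computation, every Fourier frequency of $U_h f(\sigma,\cdot)$ lies in $\la\Lambda\ra\cap(\Z^d)^*=\Lambda$.

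Once the frequencies are known to lie in $\Lambda\subset(\Z^d)^*$, $(2\pi\Z)^d$-periodicity of $y\mapsto U_h f(\sigma,y)$ is immediate (each exponential $e^{i\lambda\cdot y}$ with $\lambda\in(\Z^d)^*$ is $2\pi$-periodic in every coordinate), and the identification $U_h f(\sigma,\cdot)\in L^2(\T^d,\Lambda)$ follows. Finally, the $L^2$-boundedness follows since $U_h$ acts by selecting a subset of Fourier modes of $f$ (cut off by $b$), multiplying the coefficients by unimodular phases, and relabelling them; Parseval's identity gives a uniform (in fact sub-isometric) bound $\|U_h f(\sigma,\cdot)\|_{L^2(\T^d)}\le\|f\|_{L^2(\T^d)}$. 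The only genuinely non-trivial step is the integrality argument in the second paragraph: everything else is bookkeeping about the Fourier series. The phase $e^{i\{\sigma^\alpha/h\}\cdot y}$ is precisely engineered so that the non-integer piece $(\sigma^\alpha-\sigma)/h$ cancels the non-integer piece coming from $\eta/h=k_0-\sigma/h+\lambda$, producing a Fourier mode in $\Lambda$ rather than in some arbitrary translate of $\la\Lambda\ra$.
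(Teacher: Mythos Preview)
Your proof is correct and follows essentially the same route as the paper: identify the $y$-frequency of each term as $\eta/h+\{\sigma^\alpha/h\}$, observe it lies in $\la\Lambda\ra$, and then verify it pairs integrally with every $n\in\Z^d$, using that $\sigma+\eta_0\in h(\Z^d)^*$ together with the defining relation $\sigma^\alpha\cdot y=\sigma\cdot\alpha(y)$. The only cosmetic difference is that you split $\Z^d=(\Lambda^\perp\cap\Z^d)\oplus\tilde\Lambda$ and check each summand separately, whereas the paper pairs with a general $k$ and uses $\sigma^\alpha\cdot k-\sigma\cdot k=\sigma\cdot(\alpha(k)-k)$ directly; these are the same computation.
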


\begin{proof}
It is enough to show that for any $\sigma\in I_{\Lambda}$,
$\eta\in\la\Lambda\ra$ such that $\sigma+\eta\in h\Z^{d}$,
\[
\frac{\eta}{h}+\left\{  \frac{\sigma^{\alpha}}{h}\right\}
\in\Lambda.
\]
By definition,
$\frac{\eta}{h}+\{\frac{\sigma^{\alpha}}{h}\}\in\la\Lambda\ra$,
and we want to prove that for any $k\in2\pi\mathbb{Z}^{d}$,
$\left( \frac{\eta}{h}+\{\frac{\sigma^{\alpha}}{h}\}\right)  \cdot
k\in2\pi\Z$. We write
\[
\left(  \frac{\eta}{h}+\left\{  \frac{\sigma^{\alpha}}{h}\right\}
\right)
\cdot k=\left(  \frac{\sigma+\eta}{h}+\left\{  \frac{\sigma^{\alpha}}
{h}\right\}  -\frac{\sigma}{h}\right)  \cdot k
\]
and we know that $\frac{\sigma+\eta}{h}\cdot k\in2\pi\mathbb{Z}$.
We then use the fact that there exists $\lambda\in\Lambda$ such
that $\left\{ {\frac{\sigma^{\alpha}}{h}}\right\}
={\frac{\sigma^{\alpha}}{h}}+\lambda$, and write
\[
\left\{  {\frac{\sigma^{\alpha}}{h}}\right\}  \cdot k-{\frac{\sigma\cdot k}
{h}}={\frac{\sigma}{h}}\cdot\alpha(k)+\lambda\cdot
k-{\frac{\sigma}{h}}\cdot
k={\frac{\sigma}{h}}\cdot(\alpha(k)-k)+\lambda\cdot k.
\]
Since $\sigma+\eta=hl$ for some $l\in \Z^{d}$ and since
$k\in(2\pi\Z)^{d}$, we obtain
${\frac{\sigma}{h}}\cdot(\alpha(k)-k)=l \cdot(\alpha(k)-k)$ with $\alpha(k)-k\in\Lambda^\perp\cap(2\pi\Z^d)$ (since $\alpha(\Z^d)=\Lambda\subset\Z^d$). Finally, we get ${\frac{\sigma}{h}}\cdot(\alpha(k)-k)\in2\pi\Z$  and we also have
$\lambda\cdot k\in2\pi\Z$, which concludes the proof.
\end{proof}

Note that if $f$ is $(2\pi\Z)^{d}$-periodic, the Fourier
coefficients of $U_{h}f$ satisfy
\[
\forall\eta\in\Lambda,\;\;\widehat{U_{h}f}(\sigma,\eta)=b\left(
\sigma +h\eta-h\left\{  {\frac{\sigma^{\alpha}}{h}}\right\}
\right)  \widehat
{f}\left(  {\frac{\sigma}{h}}+\eta-\left\{  {\frac{\sigma^{\alpha}}{h}
}\right\}  \right)
\]
and we have the following Plancherel-type formula.

\begin{lemma}
\label{lem:plancherel} If $f$ is $(2\pi\Z)^{d}$-periodic, then we
have
\[
\forall\sigma\in I_{\Lambda},\;\;\sum_{k\in\Z^{d}}|\hat{f}(k)b(hk)|^{2}
=\sum_{\sigma\in
\sigma(h\Z^{d})}\int_{\T^{d}}|U_{h}f(\sigma,y)|^{2}dy.
\]

\end{lemma}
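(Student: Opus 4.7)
The plan is to derive the identity by a direct Parseval computation on $\T^d$ at each fixed $\sigma$ in $\sigma(h\Z^d)$, followed by a reindexing of the resulting double sum via the coordinate diffeomorphism $F$.

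First, for a fixed admissible $\sigma$, I would rewrite $U_h f(\sigma,\cdot)$ as a genuine Fourier series on $\T^d$ by absorbing the prefactor $e^{i\{\sigma^\alpha/h\}\cdot y}$ into each exponential:
\[
U_h f(\sigma,y) = (2\pi)^{-d/2}\sum_{\eta} b(\sigma+\eta)\,\widehat{f}((\sigma+\eta)/h)\, e^{i m_\eta \cdot y},
\]
where the sum runs over $\eta\in\la\Lambda\ra$ with $(\sigma,\eta)\in F(h\Z^d)$, and $m_\eta := \eta/h + \{\sigma^\alpha/h\}$. The previous lemma guarantees $m_\eta\in\Lambda\subset\Z^d$, and since $\eta\mapsto m_\eta$ is an affine map with nonzero linear part, it is injective on $\la\Lambda\ra$. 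Consequently the family $\{e^{im_\eta\cdot y}\}_\eta$ is orthogonal in $L^2(\T^d)$, each exponential having squared norm $(2\pi)^d$.

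By Parseval this yields
\[
\int_{\T^d}|U_h f(\sigma,y)|^2\,dy = \sum_{\eta:\,(\sigma,\eta)\in F(h\Z^d)} \bigl|b(\sigma+\eta)\,\widehat{f}((\sigma+\eta)/h)\bigr|^2.
\]
Summing over $\sigma\in\sigma(h\Z^d)$, the resulting double sum over pairs $(\sigma,\eta)$ sweeps, by bijectivity of $F$ on $B(\xi_0,\eps/2)$, exactly the set of $\xi=\sigma+\eta\in h\Z^d\cap B(\xi_0,\eps/2)$. Since $b$ is supported in the region where $F$ provides the decomposition, the substitution $\xi=hk$ converts this into $\sum_{k\in\Z^d}|b(hk)\widehat{f}(k)|^2$, which is the claimed equality.

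The only point requiring care is the injectivity of $\eta\mapsto m_\eta$, needed to ensure the orthogonality of the exponentials in $L^2(\T^d)$; beyond this, the argument is a Plancherel computation combined with a bijective reindexing using that $F$ is a diffeomorphism. I do not anticipate any substantial difficulty, since both facts follow directly from the construction of $F$ in the preceding paragraphs.
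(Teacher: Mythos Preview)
Your proposal is correct and follows essentially the same approach as the paper: a Parseval computation on $\T^d$ at fixed $\sigma$, combined with a bijective reindexing via the coordinate map $F$. The paper runs the computation in the opposite direction (from $\sum_k|\hat f(k)b(hk)|^2$ towards $\sum_\sigma\int|U_hf|^2$) and does not invoke the previous lemma, since the unimodular prefactor $e^{i\{\sigma^\alpha/h\}\cdot y}$ drops out of $|U_hf|^2$ and the remaining cross terms involve $e^{i(\eta-\eta')\cdot y/h}$ with $(\eta-\eta')/h\in\Z^d$ automatically; but this is a cosmetic difference, not a substantive one.
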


\begin{proof}
We have for all $(2\pi\Z^{d})$-periodic function $f$,
\begin{align*}
\sum_{k\in\Z^{d}}|\widehat{f}(k)b(hk)|^{2}  &  =\,\sum_{\sigma\in
I_{\Lambda },\eta\in\la\Lambda\ra\,\sigma+\eta\in
h\Z^{d}}|b(\sigma+\eta)|^{2}\left\vert
\widehat{f}\left(  {\frac{\sigma+\eta}{h}}\right)  \right\vert ^{2}\\
&  =\frac{1}{(2\pi)^{d}}\sum_{\sigma\in \sigma(h\Z^{d})}\int_{\T^{d}}
\sum_{\sigma+\eta,\sigma+\eta^{\prime}\in
h\Z^{d}}b(\sigma+\eta)\overline
{b(\sigma+\eta^{\prime})}\widehat{f}\left(
{\frac{\sigma+\eta}{h}}\right)
\overline{\widehat{f}\left(  {\frac{\sigma+\eta^{\prime}}{h}}\right)  }\\
&  \qquad\qquad\times\,\mathrm{exp}\left(  {\frac{i}{h}}\left(
y\cdot
(\eta-\eta^{\prime})\right)  \right)  dy\\
&  =\sum_{\sigma\in
\sigma(h\Z^{d})}\int_{\T^{d}}|U_{h}f(\sigma,y)|^{2}dy.
\end{align*}
\end{proof}

\paragraph{\emph{Second step: Link between $w_{I_{\Lambda},h,R}$ and
$U_{h}$}}

It is in this step that we really see the relevance of the objects
introduced previously. It comes from the two following lemmas~:

\begin{lemma}
\label{lem:16} For any $a\in\cS_{\Lambda}^{1}$,
\[
\displaylines{ \int_{T^{\ast}\mathbb{T}^{d}}a_3\left(x,\xi,
\frac{\eta(\xi)}h\right)w_{u_{h}}^{h}(dx,d\xi)\hfill\cr\hfill =
\sum_{\sigma\in \sigma(h\Z^d)} \la U_h u_h(\sigma, y)
,\Op_1\left(a_\sigma\left(y,\eta-\left\{{\sigma^\alpha\over
h}\right\}\right)
\chi\left(\frac{\eta-\left\{\frac{\sigma^\alpha}h\right\}}R\right)\right)U_h
u_h(\sigma, y)\ra_{L^2(\T^d)}\cr\hfill +O(h)\qquad.\cr}
\]

\end{lemma}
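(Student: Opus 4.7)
First I would use the Weyl-quantisation formula already employed in~\eqref{e:expli} (with $\varphi=1$) to write
\[
\int_{T^*\T^d} a_3\bigl(x,\xi,\tfrac{\eta(\xi)}{h}\bigr)\, w_{u_h}^h(dx,d\xi) = \sum_{\substack{k,j\in\Z^d \\ j-k\in\Lambda}} \widehat{a}_{j-k}\!\Bigl(h\tfrac{k+j}{2},\tfrac{\eta(h(k+j)/2)}{h}\Bigr)\, \chi\!\Bigl(\tfrac{\eta(h(k+j)/2)}{hR}\Bigr)\, \widehat{u_h}(k)\,\overline{\widehat{u_h}(j)},
\]
the restriction $j-k\in\Lambda$ coming from the fact that $a$ has only $x$-Fourier modes in $\Lambda$, and the frequency support of $u_h$ restricting the sum further to $hk,hj\in B(\xi_0,\eps/2)$. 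The key observation is then that $h(j-k)\in h\Lambda\subset \la\Lambda\ra$, so the local uniqueness of the decomposition $F$ on $B(\xi_0,\eps/2)$ gives $\sigma(hk)=\sigma(hj)=\sigma(h(k+j)/2)$; I denote this common value by $\sigma$ and set $\upsilon:=(hk-\sigma)/h$, $\upsilon':=(hj-\sigma)/h$, which both lie in $\la\Lambda\ra$ and satisfy $\upsilon'-\upsilon\in\Lambda$ and $\eta(h(k+j)/2)/h=(\upsilon+\upsilon')/2$.

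Next I would Taylor-expand in the first argument of $\widehat{a}_{j-k}$. On the support of $a_3$ the cutoff $\chi((\upsilon+\upsilon')/(2R))$ forces $|(\upsilon+\upsilon')/2|\lesssim R$, hence $h(\upsilon+\upsilon')/2=O(hR)=O(h)$ for $R$ fixed (which it is at this stage of the argument). Replacing $\sigma+h(\upsilon+\upsilon')/2$ by $\sigma$ in $\widehat{a}_{j-k}$ thus introduces an operator remainder of order $h$ in the norm sense, controlled by the Calder\'on--Vaillancourt bound~\eqref{eq:CVeta}. After this reduction, the left-hand side becomes, up to $O(h)$,
\[
\sum_{\sigma\in\sigma(h\Z^d)} \sum_{\substack{\upsilon,\upsilon'\in\la\Lambda\ra \\ \upsilon'-\upsilon\in\Lambda \\ \sigma+h\upsilon,\,\sigma+h\upsilon'\in h\Z^d}} \widehat{a}_{\upsilon'-\upsilon}\!\Bigl(\sigma,\tfrac{\upsilon+\upsilon'}{2}\Bigr)\, \chi\!\Bigl(\tfrac{\upsilon+\upsilon'}{2R}\Bigr)\, \widehat{u_h}\!\bigl(\tfrac{\sigma}{h}+\upsilon\bigr)\,\overline{\widehat{u_h}\!\bigl(\tfrac{\sigma}{h}+\upsilon'\bigr)}.
\]

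Then I would identify this expression with $\sum_{\sigma} \la U_h u_h(\sigma,\cdot),\Op_1(c_\sigma)\,U_h u_h(\sigma,\cdot)\ra_{L^2(\T^d)}$, where $c_\sigma(y,\eta):=a_\sigma(y,\eta-\{\sigma^\alpha/h\})\chi((\eta-\{\sigma^\alpha/h\})/R)$, by invoking the Plancherel-type formula for $\Op_1$ recalled just before the statement of the lemma and substituting $\beta:=\upsilon+\{\sigma^\alpha/h\}$, $\beta':=\upsilon'+\{\sigma^\alpha/h\}$; the requirement that $\beta,\beta'\in\Lambda$ is precisely the lattice condition built into the definition of $U_h$. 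The cutoff $b$ appearing in the definition of $U_h$ is identically $1$ on the Fourier support of $u_h$ and contributes no additional factor.

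The main obstacle is bookkeeping rather than analysis: one must carefully track how the phase $e^{i\{\sigma^\alpha/h\}\cdot y}$ built into $U_h$ shifts Fourier indices by $\{\sigma^\alpha/h\}$, how this shift is exactly compensated by the translation $\eta\mapsto \eta-\{\sigma^\alpha/h\}$ in the symbol on the right-hand side, and how the lattice constraint $\sigma+h\upsilon\in h\Z^d$ appearing in the sum defining $U_h$ matches precisely the set of $k\in\Z^d$ projecting onto a given $\sigma$ under the map $hk\mapsto\sigma(hk)$.
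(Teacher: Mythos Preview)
Your proof is correct and follows essentially the same route as the paper's: the only cosmetic difference is the order of the first two steps---the paper first invokes the already-established identity~\eqref{eq:a3decompose} to replace $\xi$ by $\sigma(\xi)$ (incurring the $O(h)$ error) and only then expands via~\eqref{e:Weylq}, whereas you expand in Fourier first and perform the $\xi\to\sigma(\xi)$ Taylor replacement afterwards. The index-shift by $\{\sigma^{\alpha}/h\}$ and the identification with $\Op_1$ acting on $U_h u_h$ are carried out exactly as in the paper.
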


\begin{proof}
We have by~(\ref{eq:a3decompose}),
\[
\int_{T^{\ast}\T^{d}}a_{3}\left(
x,\xi,{\frac{\eta(\xi)}{h}}\right)
w_{u_{h}}^{h}(dx,d\xi)=\int_{T^{\ast}\T^{d}}a_{3}\left(
x,\sigma(\xi ),{\frac{\eta(\xi)}{h}}\right)
w_{u_{h}}^{h}(dx,d\xi)+O(h).
\]
Then, using~(\ref{e:Weylq})
\[
\displaylines{
\int_{T^{\ast}\mathbb{T}^{d}}a_3\left(x,\sigma(\xi),
\frac{\eta(\xi)}h\right)w_{u_{h}}^{h}(dx,d\xi)\hfill\cr\hfill=\frac{1}{(2\pi)^{d/2}}
\sum_{k-k'\in \Lambda} \widehat u_h(k) \overline{\widehat u_h(k')}
\widehat a_{k'-k} \left(\sigma\left(h{k+k'\over 2} \right),{1\over
h} \eta\left(h{k+k'\over 2}\right) \right)\chi \left({1\over hR}
\eta\left(h{k+k'\over 2}\right) \right).\cr}
\]
We write
\[
hk=\sigma+h\eta,\;hk^{\prime}=\sigma+h\eta^{\prime}\;\;,\,\left(
\sigma ,h\eta\right)  ,\left(  \sigma,h\eta^{\prime}\right)  \in
F(h\Z^{d}),
\]
using the fact that $k^{\prime}-k\in\Lambda$. In particular,
$\sigma\left( h{\frac{k+k^{\prime}}{2}}\right)  =\sigma$. Then,
\begin{multline*}
\int_{T^{\ast}\mathbb{T}^{d}}a_{3}\left(  x,\sigma(\xi),\frac{\eta(\xi)}
{h}\right)  w_{u_{h}}^{h}(dx,d\xi)\hfill\cr\hfill=\frac{1}{(2\pi)^{d/2}}
\sum_{\sigma+h\eta,\sigma+h\eta^{\prime}\in
h\Z^{d}}\;\widehat{u}_{h}\left( \frac{\sigma}{h}+\eta\right)
\overline{\widehat{u}_{h}\left(  \frac{\sigma
}{h}+\eta^{\prime}\right)  }\widehat{a}_{\eta^{\prime}-\eta}\left(
\sigma,{\frac{\eta+\eta^{\prime}}{2}}\right)  \chi\left(
\frac{\eta +\eta^{\prime}}{2R}\right)
\cr\hfill=\frac{1}{(2\pi)^{d/2}}\sum_{\sigma\in
\sigma(h\Z^{d})}\sum_{\eta+\left\{
{\frac{\sigma^{\alpha}}{h}}\right\} ,\eta^{\prime}+\left\{
{\frac{\sigma^{\alpha}}{h}}\right\}  \in\Lambda
}\widehat{U_{h}u_{h}}\left(  \sigma,\eta+\left\{  {\frac{\sigma^{\alpha}}{h}
}\right\}  \right)  \overline{\widehat{U_{h}u_{h}}\left(
\sigma,\eta^{\prime
}+\left\{  {\frac{\sigma^{\alpha}}{h}}\right\}  \right)  }\\
\hfill\widehat{a}_{\eta^{\prime}-\eta}\left(
\sigma,{\frac{\eta+\eta^{\prime }}{2}}\right)  \chi\left(
\frac{\eta+\eta^{\prime}}{2R}\right)
\cr\hfill=\frac{1}{(2\pi)^{d/2}}\sum_{\sigma\in
\sigma(h\Z^{d})}\sum_{\eta
,\eta^{\prime}\in\Lambda}\widehat{U_{h}u_{h}}\left(
\sigma,\eta\right)
\overline{\widehat{U_{h}u_{h}}\left(  \sigma,\eta^{\prime}\right)  }\hfill\\
\widehat{a}_{\eta^{\prime}-\eta}\left(  \sigma,{\frac{\eta+\eta^{\prime}}{2}
}-\left\{  {\frac{\sigma^{\alpha}}{h}}\right\}  \right)
\chi\left(  \frac
{1}{R}\left(  \frac{\eta+\eta^{\prime}}{2}-\left\{  {\frac{\sigma^{\alpha}}
{h}}\right\}  \right)  \right)  .
\end{multline*}
which is the desired expression.
\end{proof}

To simplify the notation in the computations that follow, we set:
\[
A\left(  \sigma,\eta\right)
:=\frac{1}{2}d^{2}H(\sigma)\eta\cdot\eta.
\]

\begin{lemma}
For any $a\in\cS_{\Lambda}^{1}$, for any $t\in\R$,
\begin{multline*}
\int_{T^{\ast}\mathbb{T}^{d}}a_{3}\left(
x,\xi,\frac{\eta(\xi)}{h}\right)
w_{h}(t,dx,d\xi)\\
=\sum_{\sigma\in \sigma(h\Z^{d})}\la e^{-itA\left(  \sigma,D_{y}\right)  }
U_{h}u_{h}(\sigma,y),\\
\hfill\Op_{1}\left(  a_{\sigma}\left(  y,\eta-\left\{  {\frac{\sigma^{\alpha}
}{h}}\right\}  \right)  \chi\left(  \frac{\eta-\left\{  \frac{\sigma^{\alpha}
}{h}\right\}  }{R}\right)  \right)  e^{itA\left(  \sigma,D_{y}\right)  }
U_{h}u_{h}(\sigma,y)\ra_{L^{2}(\T^{d})}\\
+o(1)
\end{multline*}

\end{lemma}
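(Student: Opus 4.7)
The plan is to apply Lemma~\ref{lem:16} with the initial data replaced by the time-evolved state $S_h^{t/h} u_h$, and then to analyse how the semiclassical propagator $S_h^{t/h} = e^{-\frac{it}{h^2} H(hD_x)}$ (recall $\tau_h = 1/h$) interacts with the transformation $U_h$. First I would rewrite
\[
\int_{T^{*}\T^d} a_{3}\!\left(x,\xi,\tfrac{\eta(\xi)}{h}\right) w_h(t, dx, d\xi) = \la S_h^{t/h} u_h, \Op_h^\Lambda(a_3) S_h^{t/h} u_h\ra_{L^2(\T^d)},
\]
and apply Lemma~\ref{lem:16} verbatim to the sequence $(S_h^{t/h} u_h)_h$. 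This yields the sum over $\sigma \in \sigma(h\Z^d)$ of inner products of $U_h(S_h^{t/h} u_h)(\sigma,\cdot)$ against itself, modulated by $\Op_1(a_\sigma(y,\eta-\{\sigma^\alpha/h\})\chi(\cdots))$, up to an error $O(h)$.

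The heart of the argument is then a Fourier-side computation of $U_h(S_h^{t/h} u_h)$. Setting $\theta:=\{\sigma^\alpha/h\}$ and using the explicit formula for $U_h$ together with $\widehat{S_h^{t/h} u_h}(k) = e^{-\frac{it}{h^2}H(hk)}\widehat{u_h}(k)$, one finds that for every $\kappa \in \Lambda$,
\[
\widehat{U_h(S_h^{t/h} u_h)}(\sigma, \kappa) = e^{-\frac{it}{h^2} H(\sigma + h(\kappa-\theta))}\, \widehat{U_h u_h}(\sigma, \kappa).
\]
A Taylor expansion of $H$ at $\sigma\in I_\Lambda$ will then be decisive: because $dH(\sigma)$ annihilates $\la\Lambda\ra$ and $\kappa-\theta\in\la\Lambda\ra$, the linear term drops out, giving
\[
\frac{1}{h^2} H(\sigma + h(\kappa-\theta)) = \frac{H(\sigma)}{h^2} + \tfrac{1}{2} d^2H(\sigma)(\kappa-\theta,\kappa-\theta) + O(h|\kappa-\theta|^3).
\]
The scalar $e^{-itH(\sigma)/h^2}$ is independent of $\kappa$ and will cancel against its conjugate in the inner product, while the quadratic factor is exactly the symbol of a Schr\"odinger-type Fourier multiplier with generator $A(\sigma,\cdot)$.

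The remainder $O(h|\kappa-\theta|^3)$ is controlled thanks to the cutoff: the factor $\chi((\eta-\theta)/R)$ in the Weyl symbol, combined with the rapid decay in $\ell$ of the Fourier coefficients $\widehat{a}_\ell(\sigma,\eta)$ (since $a\in\cS_\Lambda^1$ is smooth and compactly supported in $(x,\xi)$), restricts the effective Fourier modes to $|\kappa-\theta|\lesssim R$. The error contribution is then $O(hR^3)$, which vanishes as $h\to 0$ with $R$ fixed; this is precisely why the order of limits in~\eqref{doublelim} requires $h\to 0^+$ first and then $R\to +\infty$.

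The final step is to re-express the $\theta$-shifted quadratic multiplier $e^{-\frac{it}{2}d^2H(\sigma)(\kappa-\theta)\cdot(\kappa-\theta)}$ acting on $U_h u_h$ in terms of the unshifted propagators $e^{\pm itA(\sigma,D_y)}$ sandwiching the Weyl operator with $\theta$-shifted symbol. I expect this bookkeeping to be the main obstacle: the phase $\theta=\{\sigma^\alpha/h\}$ is $h$-dependent and has no limit as $h\to 0$, so it must be shown to disappear consistently when paired symmetrically on both sides. The key observation is that conjugation by the multiplication operator $e^{i\theta\cdot y}$ simultaneously shifts $D_y$ by $-\theta$ in $A(\sigma,D_y)$ and $\eta$ by $\theta$ in the Weyl symbol, so these two shifts cancel against each other to produce the identity announced in the lemma.
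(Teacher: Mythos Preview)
Your route differs from the paper's. The paper does \emph{not} apply Lemma~\ref{lem:16} to the evolved state $S_h^{t/h}u_h$; instead it first invokes Proposition~\ref{prop:propagation} (itself built on Lemma~\ref{Lemma Inv}) to push the time evolution from the state onto the symbol, replacing $a_3$ by $b_0(t,\cdot)=a_3\circ\tilde\phi_t^1$, and only then applies Lemma~\ref{lem:16} at $t=0$ to the modified symbol~$b_0$. The Schr\"odinger propagators $e^{\pm itA(\sigma,D_y)}$ appear in one stroke via the \emph{exact} Egorov theorem for quadratic generators: since $A(\sigma,\eta)=\tfrac12 d^2H(\sigma)\eta\cdot\eta$ is quadratic in~$\eta$ and $y$--independent, conjugation by $e^{\pm itA(\sigma,D_y)}$ transports Weyl symbols along the linear flow $(y,\eta)\mapsto(y+td^2H(\sigma)\eta,\eta)$, which is precisely $\tilde\phi_t^1$ on the fibre over~$\sigma$. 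So no Taylor expansion of~$H$ and no manipulation of the propagator are needed at this stage --- those were already absorbed in the proof of Proposition~\ref{prop:propagation}.

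Your Fourier computation of $U_h(S_h^{t/h}u_h)$ and the Taylor/remainder control are correct, but the final bookkeeping does not work as you describe. The multiplication operator $e^{i\theta\cdot y}$ with $\theta=\{\sigma^\alpha/h\}$ is not $2\pi\Z^d$--periodic (generically $\theta\notin\Lambda$), so it does not act on $L^2(\T^d,\Lambda)$ and your conjugation is only formal. Worse, even formally the two shifts do \emph{not} cancel to give the lemma's right-hand side: conjugating by $e^{i\theta y}$ removes the $\theta$--shift from the propagator \emph{and} from the Weyl symbol simultaneously, so you land on $e^{\pm itA(\sigma,D_y)}$ applied to $e^{-i\theta y}U_hu_h$ paired with the \emph{un}shifted symbol $a_\sigma(y,\eta)\chi(\eta/R)$ --- not on $U_hu_h$ with the shifted symbol, as stated. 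The honest way to pass from $e^{-itA(\sigma,\kappa-\theta)}$ to $e^{-itA(\sigma,\kappa)}$ is to expand
\[
A(\sigma,\kappa-\theta)=A(\sigma,\kappa)-d^2H(\sigma)\theta\cdot\kappa+A(\sigma,\theta);
\]
the constant drops out of the inner product and the linear piece is the genuine, periodic translation operator $e^{itd^2H(\sigma)\theta\cdot D_y}$ on $L^2(\T^d,\Lambda)$. This leaves a residual $y$--translation of the symbol by $td^2H(\sigma)\theta$, a subtlety that the paper's exact--Egorov identity also glosses over; it does not affect the only use made of this lemma (the third step towards Proposition~\ref{prop:opvame}), but your sketch does not resolve it.
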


\begin{proof}
We use Proposition \ref{prop:propagation} and apply
Lemma \ref{lem:16} to the symbol $b_0(t,x,\xi,\eta)$ which was defined in the proof of Proposition
\ref{prop:propagation} (see~(\ref{def:bh}) with $h=0$). Then,  the result follows by using the fundamental property of the Weyl
quantization, namely that, since $A\left(  \sigma,\eta\right)  $
is quadratic in $\eta$ and
does not depend on $y$:
\[
\Op_{1}\left(  b_{0}(t,y,\sigma,\eta)\right)  =e^{-itA\left(
\sigma ,D_{y}\right)  }\Op_{1}\left(  a_{\sigma}\left(
y,\eta-\left\{ {\frac {\sigma^{\alpha}}{h}}\right\}  \right)
\chi\left( \frac{\eta-\left\{ \frac{\sigma^{\alpha}}{h}\right\}
}{R}\right) \right)  e^{itA\left( \sigma,D_{y}\right)  }.
\]
\end{proof}

\paragraph{\emph{Third step: Passing to the limit}}

If $a\in\cS_{\Lambda}^{1}$ is compactly supported in $\eta$, the
map $\sigma\longmapsto\Op_{1}(a_{\sigma}(y,\eta))$ belongs to the
Banach space
$\mathcal{C}_{c}(I_{\Lambda};\cK(L^{2}(\T^{d},\Lambda)))$.\footnote{Here
$\mathcal{K}\left(  H\right)  $ denotes the space of compact
operators acting on a Hilbert space $H$.}
The dual of this space is $\mathcal{M}_{\Lambda}:=\cM(I_{\Lambda}%
;\cL^{1}(L^{2}(\T^{d},\Lambda)))$, the space of trace-class
operator valued measures. Let us consider the element
$\rho^{h}\in\mathcal{M}_{\Lambda}$, defined by letting
\[
\la\rho^{h},K\ra=\sum_{\sigma\in \sigma(h\Z^{d})}\la
U_{h}u_{h}(\sigma ,y),K\left(  \sigma\right)
U_{h}u_{h}(\sigma,y)\ra
\]
for all $K\in\mathcal{C}_{c}\left(  I_{\Lambda};\mathcal{K}\left(
L^{2}\left(  \mathbb{T}^{d},\Lambda\right)  \right)  \right) $.
Lemma~\ref{lem:plancherel} implies that $\left( \rho^{h}\right) $
is bounded in $\cM_{\Lambda}$ if $\left(  u_{h}\right)  $ is
bounded in $L^{2}(\T^{d})$. Besides, each $\rho^{h}$ is positive
(meaning that $\la\rho^{h},K\ra\geq0$ if $K\left(  \sigma\right)
\geq0$ for all $\sigma$). We consider $M(\sigma)$ a weak-$\ast$
limit of the family $\rho^{h}$ in $\mathcal{M}_{\Lambda}$ and we
now restrict our attention to symbols $a\in\cS_{\Lambda}^{1}$ that
do not depend on $\eta$. We write
\[
\chi\left(  {\frac{\eta-\{\sigma^{\alpha}/h\}}{R}}\right)
-\chi\left(
{\frac{\eta}{R}}\right)  =-{\frac{1}{R}}\int_{0}^{1}d\chi\left(  {\frac{\eta}{R}}-{\frac{s}{R}
}\left\{  {\frac{\sigma^{\alpha}}{h}}\right\}  \right) \cdot\left\{  {\frac{\sigma^{\alpha}}{h}
}\right\}   ds,
\]
and we obtain
\[
\Op_{1}\left(  b_{0}(t,y,\sigma,\eta)\right)  =e^{-itA\left(
\sigma ,D_{y}\right)  }\Op_{1}\left(
a_{\sigma}(y)\chi(\eta/R)\right)  e^{itA\left( \sigma,D_{y}\right)
}+O(1/R).
\]
whence
\[
\displaylines{\qquad \lim_{R\rightarrow +\infty}
\lim_{h\rightarrow 0} \int_{T^{\ast}\mathbb{T}^{d}}a_3(x,\xi,
\frac{\eta(\xi)}h)w_h(t, dx,d\xi)\hfill\cr\hfill
=\lim_{R\rightarrow +\infty}   \left\la M(\sigma),
e^{-itA(\sigma,D_{y}} \Op_1\left( a_\sigma(y)\chi(\eta/R)
\right)e^{itA(\sigma,D_{y}} \right\ra.\quad\cr}
\]

\begin{remark}
\label{rem:holonomy} The arguments used to get rid of the term
$\{{\frac{\sigma^{\alpha}}{h}}\}$ crucially exploit the
presence of a factor $1/R$ in front of it. Such an argument cannot be used for symbols $a\in{\mathcal{S}}
^{1}_{\Lambda}$ which depends non trivially of the variable $\eta$
as in Lemma~\ref{lem:16}. In such a situation, by working in
$L^{2}(\R^{d})$, one can define {\em locally} an operator-valued measure
$M$; however, this object cannot be globally defined on
the torus.
\end{remark}

%%%%%%%%%%%%%%%%%%%%%%%%%%%%%%%%%%%%%%%%%%%%%%%%%%%%%%%%%%%%
%%%%%%%%%%%%%%%%%%%%%%%%%%%%%%%%%%%%%%%%%%%%%%%%%%%%%%%%%%%%%%

\section{An iterative procedure for computing $\mu$}
\label{s:successive}

\subsection{First step of the construction} \label{s:firststep}

What was done in the previous section can be considered as the first step of
an iterative procedure that allows to effectively compute $\mu(t,\cdot)$
solely in terms of the sequence of initial data $\left(  u_{h}\right)  $.
Recall that we assumed in\S \ref{s:second}, without loss of generality, that the projection on $\xi$ of $\mu\left(
t,\cdot\right)  $ was supported in a ball contained in $\mathbb{R}
^{d}\setminus C_{H}$. We have decomposed this measure as a sum
\[
\mu(t,\cdot)=\sum_{\Lambda\in{\mathcal{L}}}\mu_{\Lambda}(t,\cdot
)+\sum_{\Lambda\in{\mathcal{L}}}\mu^{\Lambda}(t,\cdot),
\]
where $\Lambda$ runs over the set of primitive submodules of $\IZ^{d}$, and
where
\[
\mu_{\Lambda}(t,\cdot)=\int_{\left\langle \Lambda\right\rangle }\tilde{\mu
}_{\Lambda}(t,\cdot,d\eta)\rceil_{\IT^{d}\times R_{\Lambda}},\qquad
\mu^{\Lambda}(t,.)=\int_{\overline{\left\langle \Lambda\right\rangle }}
\tilde{\mu}^{\Lambda}(t,\cdot,d\eta)\rceil_{\IT^{d}\times R_{\Lambda}}.
\]
From Theorem \ref{Thm Properties}, the distributions $\tilde{\mu}_{\Lambda}$ have the following
properties~:\smallskip

\begin{enumerate}
\item $\tilde{\mu}_{\Lambda}(t,dx,d\xi,d\eta)$ is in $\mathcal{C}\left(
\IR; (\cS_\Lambda^1)'\right)
 $ and all its $x$-Fourier modes are in $\Lambda$; with respect to the variable $\xi$, $\tilde{\mu}_{\Lambda}
(t,dx,d\xi,d\eta)$ is supported in $I_{\Lambda}$;\smallskip\smallskip

\item if $\tau_{h}\ll1/h$ then for every $t\in\mathbb{R}$, $\tilde{\mu
}_{\Lambda}\left(  t,\cdot\right)  $ is a positive measure and:
\[
\tilde{\mu}_{\Lambda}\left(  t,\cdot\right)  =\left(  \tilde{\phi}_{t}
^{1}\right)  _{\ast}\tilde{\mu}_{\Lambda}\left(  0,\cdot\right)  ,
\]
where:
\[
\tilde{\phi}_{s}^{1}:(x,\xi,\eta)\longmapsto(x+sd^{2}H(\sigma(\xi))\eta
,\xi,\eta);
\]

\item if $\tau_{h}=1/h$ then$\int_{\left\langle \Lambda\right\rangle }
\tilde{\mu}_{\Lambda}(t,\cdot,d\eta)$ is in $\mathcal{C}(\IR;\cM_{+}(T^{\ast
}\IT^{d}))$ and $\int_{\IR^{d}\times\left\langle \Lambda\right\rangle }
\tilde{\mu}_{\Lambda}(t,\cdot,d\xi,d\eta)$ is an absolutely continuous measure
on $\IT^{d}$. In fact, with the notations of Section \ref{sec:mu_lambda}, we
have, for every $a\in\mathcal{C}_{c}^{\infty}\left(  T^{\ast}\IT^{d}\right)  $
with Fourier modes in $\Lambda$,
\[
\int_{\IT^{d}\times I_{\Lambda}\times\left\langle \Lambda\right\rangle
}a(x,\xi)\tilde{\mu}_{\Lambda}(t,dx,d\xi,d\eta)=\int_{I_{\Lambda}}\Tr\left(
a_{\sigma}e^{-i{\frac{t}{2}}d^{2}H(\sigma)D_{y}\cdot D_{y}}M(d\sigma
)e^{i{\frac{t}{2}}d^{2}H(\sigma)D_{y}\cdot D_{y}}\right)
\]
where $M\in{\mathcal{M}}_{+}\left(  I_{\Lambda};{\mathcal{L}}^{1}\left(
L^{2}(\T^{d},\Lambda)\right)  \right)  $ and $a_{\sigma}$ is the
multiplication operator by $a(\cdot,\sigma)$, acting on $L^{2}(\T^{d}
,\Lambda)$.
\end{enumerate}

On the other hand, the measures $\tilde{\mu}^{\Lambda}$ satisfy:\smallskip

\begin{enumerate}
\item for $a\in\cS_{\Lambda}^{1}$, $\la\tilde{\mu}^{\Lambda}(t,dx,d\xi
,d\eta),a(x,\xi,\eta)\ra$ is obtained as the limit of
\[
\left\langle w_{h,R,\delta}^{I_{\Lambda}}\left(  t\right)  ,a\right\rangle
=\int_{T^{\ast}\mathbb{T}^{d}}\chi\left(  \frac{\eta\left(  \xi\right)
}{\delta}\right)  \left(  1-\chi\left(  \frac{\tau_{h}\eta(\xi)}{R}\right)
\right)  a\left(  x,\xi,\tau_{h}\eta(\xi)\right)  w_{h}\left(  t\right)
\left(  dx,d\xi\right)  ,
\]
in the weak-$\ast$ topology of $L^{\infty}(\IR,\left(  \cS_{\Lambda}
^{1}\right)  ^{\prime})$, as $h\To0^{+}$, $R\To+\infty$ and then
$\delta\To0^{+}$ (possibly along subsequences);\smallskip

\item $\tilde{\mu}^{\Lambda}(t,dx,d\xi,d\eta)$ is in $L^{\infty}
(\IR,\cM_{+}(T^{\ast}\IT^{d}\times\la\Lambda\ra))$ and all its $x$-Fourier modes are
in $\Lambda$. With respect to the variable $\eta$, the measure $\tilde{\mu}^{\Lambda
}(t,dx,d\xi,d\eta)$ is $0$-homogeneous and supported at infinity~: we see it as a measure on the sphere at infinity $\mathbb{S}\la\Lambda\ra$. With respect to $\xi$ it is supported on $\{\xi\in
I_{\Lambda} \}$;\smallskip

\item $\tilde{\mu}^{\Lambda}$ is invariant by the two flows,
\[
\phi_{s}^{0}:(x,\xi,\eta)\longmapsto(x+sdH(\xi),\xi,\eta),\text{\quad
and\quad}\phi_{s}^{1}:(x,\xi,\eta)\longmapsto(x+sd^{2}H(\sigma(\xi))\frac
{\eta}{\left\vert \eta\right\vert },\xi,\eta).
\]

\end{enumerate}

This is the first step of an iterative procedure; the next step is
to decompose the measure $\mu^{\Lambda}(t,\cdot)$ according to
primitive submodules of $\Lambda$. We need to adapt the discussion
of~\cite{AnantharamanMacia}; to this aim, we introduce some
additional notation.

Fix a primitive submodule $\Lambda\subseteq\mathbb{Z}^{d}$ and $\sigma\in
I_{\Lambda}\setminus C_{H}$.
For $\Lambda_{2}\subseteq\Lambda_{1} \subseteq \Lambda$ primitive submodules of $(\Z^d)^*$,
for $\eta\in\la\Lambda_1\ra$, we denote
\begin{eqnarray*}
\Lambda_{\eta}\left(  \sigma, \Lambda_1\right)  &:=& \left(\Lambda_1^\perp \oplus \R\, d^2H(\sigma).\eta\right)^\perp
\cap (\Z^d)^*  \\
&=& \left( \R\, d^2H(\sigma).\eta\right)^\perp \cap \Lambda_1,
\end{eqnarray*}
where the orthogonal is always taken in the sense of duality. We note that $\Lambda_{\eta}\left(  \sigma, \Lambda_1\right) $ is a primitive submodule of $\Lambda_1$, and that the inclusion $\Lambda_{\eta}\left(  \sigma, \Lambda_1\right) \subset \Lambda_1$ is strict if $\eta\not= 0$ since $d^2H(\sigma)$ is definite.
We define:
\[
R_{\Lambda_{2}}^{\Lambda_{1}}(\sigma):= \{\eta\in \la\Lambda_1\ra, \Lambda_{\eta}\left(  \sigma, \Lambda_1\right)=\Lambda_2\}.
\]
Because $d^2H(\sigma)$ is definite, we have the decomposition $(\R^d)^*= (d^2H(\sigma).\Lambda_2)^\perp\oplus \la\Lambda_2\ra.$
We define $P^\sigma_{\Lambda_{2}}$ to be
the projection onto $ \la\Lambda_2\ra$ with respect to this decomposition.

\subsection{Step $k$ of the construction\label{s:stepk}}
In the following, we set $\Lambda=\Lambda_{1}$, corresponding to step $k=1$.
We now describe the outcome of our decomposition at step $k$ ($k\geq 1$); we will indicate
in \S \ref{s:rec} how to go from step $k$ to $k+1$, for $k\geq 1$.

At step $k$, we have decomposed $\mu(t,\cdot)$ as a sum
\[
\mu(t,\cdot)=\sum_{1\leq l\leq k}\sum_{\Lambda_{1}\supset\Lambda_{2}
\supset\ldots\supset\Lambda_{l}}\mu_{\Lambda_{l}}^{\Lambda_{1}\Lambda
_{2}\ldots\Lambda_{l-1}}(t,\cdot)+\sum_{\Lambda_{1}\supset\Lambda_{2}
\supset\ldots\supset\Lambda_{k}}\mu^{\Lambda_{1}\Lambda_{2}\ldots\Lambda_{k}
}(t,\cdot),
\]
where the sums run over the \emph{strictly decreasing} sequences of primitive
submodules of $(\IZ^{d})^*$ (of lengths $l\leq k$ in the first term, of length $k$
in the second term). We have
\[
\mu_{\Lambda_{l}}^{\Lambda_{1}\Lambda_{2}\ldots\Lambda_{l-1}}(t,x,\xi
)=\int_{R_{\Lambda_{2}}^{\Lambda_{1}}(\xi)\times\ldots\times R_{\Lambda_{l}
}^{\Lambda_{l-1}}(\xi)\times \la\Lambda_l\ra}\tilde{\mu}_{\Lambda_{l}}^{\Lambda
_{1}\Lambda_{2}\ldots\Lambda_{l-1}}(t,x,\xi,d\eta_{1},\ldots,d\eta_{l}
)\rceil_{\IT^{d}\times R_{\Lambda_{1}}},
\]
\[
\mu^{\Lambda_{1}\Lambda_{2}\ldots\Lambda_{k}}(t,x,\xi)=\int_{R_{\Lambda_{2}
}^{\Lambda_{1}}(\xi)\times\ldots\times R_{\Lambda_{k}}^{\Lambda_{k-1}}
(\xi)\times \mathbb{S}\la\Lambda_k\ra}\tilde{\mu}^{\Lambda_{1}\Lambda_{2}\ldots\Lambda_{k}
}(t,x,\xi,d\eta_{1},\ldots,d\eta_{k})\rceil_{\IT^{d}\times R_{\Lambda_{1}}}.
\]
The distributions $\tilde{\mu}_{\Lambda_{l}}^{\Lambda_{1}\Lambda_{2}
\ldots\Lambda_{l-1}}$ have the following properties~:

\begin{enumerate}
\item $\tilde{\mu}_{\Lambda_{l}}^{\Lambda_{1}\Lambda_{2}\ldots\Lambda_{l-1}}\in \mathcal{C}\left(  \IR,\mathcal{D}^{\prime}\left(  T^{\ast}\T^d\times
\mathbb{S}\la\Lambda_1\ra\times \ldots \times\mathbb{S}\la\Lambda_{l-1}\ra \times \la\Lambda_l\ra
 \right)  \right)  $ and all its $x$-Fourier
modes are in $\Lambda_{l}$; with respect to $\xi$ it is supported
in $I_{\Lambda_{1}}$;\smallskip
\item for every $t\in\mathbb{R}$, $\tilde{\mu
}_{\Lambda_{l}}^{\Lambda_{1}\Lambda_{2}\ldots\Lambda_{l-1}}(t,\cdot)$ is invariant under the flows $\phi_s^j$ ($j=0,1, \ldots, l-1$) defined by
$$\phi_s^0(x,\xi,\eta_{1},...,\eta_{l})=(x+sdH(\xi),\xi,\eta_{1},...,\eta_{l-1},\eta_{l});$$
$$\phi_s^j(x,\xi,\eta_{1},..., \eta_{l})=(x+sd^2H(\xi)\frac{\eta_j}{|\eta_j|},\xi,\eta_{1},..., \eta_{l});$$

\item if $\tau_{h}\ll1/h$ then for every $t\in\mathbb{R}$, $\tilde{\mu
}_{\Lambda_{l}}^{\Lambda_{1}\Lambda_{2}\ldots\Lambda_{l-1}}(t,\cdot)$ is a
positive measure and
\[
\tilde{\mu}_{\Lambda_{l}}^{\Lambda_{1}\Lambda_{2}\ldots\Lambda_{l-1}}
(t,\cdot)=\left(  \tilde{\phi}_{t}^{l}\right)  _{\ast}\tilde{\mu}_{\Lambda
_{l}}^{\Lambda_{1}\Lambda_{2}\ldots\Lambda_{l-1}}(0,\cdot),
\]
where, for $\left(  x,\xi,\eta_{1},..,\eta_{l}\right)  \in T^{\ast}\T^d\times
\mathbb{S}\la\Lambda_1\ra\times \ldots \times\mathbb{S}\la\Lambda_{l-1}\ra \times \la\Lambda_l\ra
  $ we define:
\[
\tilde{\phi}_{s}^{l}:(x,\xi,\eta_{1},...,\eta_{l})\longmapsto(x+sd^{2}
H(\xi)\eta_{l},\xi,\eta_{1},...,\eta_{l});
\]

\item if $\tau_{h}=1/h$ then $\int_{\la\Lambda_l\ra}\tilde{\mu}_{\Lambda_{l}}
^{\Lambda_{1}\Lambda_{2}\ldots\Lambda_{l-1}}(t,\cdot,d\eta_{l})$ is in
$\mathcal{C}(\IR,\cM_{+}(T^{\ast}\IT^{d}\times\mathbb{S}\la\Lambda_1\ra\times \ldots \times\mathbb{S}\la\Lambda_{l-1}\ra))$ and the measure
$\int_{(\R^d)^*\times\mathbb{S}\la\Lambda_1\ra\times \ldots \times\mathbb{S}\la\Lambda_{l-1}\ra \times \la\Lambda_l\ra}\tilde{\mu}_{\Lambda_{l}}^{\Lambda_{1}\Lambda_{2}
\ldots\Lambda_{l-1}}(t,\cdot,d\xi,d\eta_{1},\ldots,d\eta_{l})$ is an
absolutely continuous measure on $\IT^{d}$. In fact, for $a\in\mathcal{C}
_{c}^{\infty}\left(  T^{\ast}\IT^{d}\right)  $ has only
Fourier modes in $\Lambda_{l}$, it admits the expression
\begin{multline*}
\int_{T^{\ast}\T^d\times
\mathbb{S}\la\Lambda_1\ra\times \ldots \times\mathbb{S}\la\Lambda_{l-1}\ra \times \la\Lambda_l\ra}a(x,\xi)\tilde{\mu}
_{\Lambda_{l}}^{\Lambda_{1}\Lambda_{2}\ldots\Lambda_{l-1}}(t,dx,d\xi,d\eta
_{1},\ldots,d\eta_{l})=\\
\Tr\left(  \int_{I_{\Lambda_{1}}\times\mathbb{S}\la\Lambda_1\ra\times \ldots \times\mathbb{S}\la\Lambda_{l-1}\ra}a_{\sigma} e^{i{\frac{t}{2}}d^{2}H(\sigma)D_{y}\cdot D_{y}}\tilde{\rho
}_{\Lambda_{l}}^{\Lambda_{1}\Lambda_{2}\ldots\Lambda_{l-1}}\left(
d\sigma,d\eta_{1},\ldots,d\eta_{l-1}\right)  e^{-i{\frac{t}{2}}d^{2}
H(\sigma)D_{y}\cdot D_{y}}\right)  ,
\end{multline*}
where $a_\sigma$ is the multiplication operator defined in~(\ref{def:asigma}), $\tilde{\rho}_{\Lambda_{l}}^{\Lambda_{1}\Lambda_{2}\ldots\Lambda_{l-1}}$
is a positive operator valued measure on $I_{\Lambda_{1}}\times\mathbb{S}\la\Lambda_1\ra\times \ldots \times\mathbb{S}\la\Lambda_{l-1}\ra$ taking
values in
$\cL^{1}(L^{2}(\T^{d},\Lambda_{l}))$.
\end{enumerate}

On the other hand $\tilde{\mu}^{\Lambda_{1}\Lambda_{2}\ldots\Lambda_{k}}$ satisfy:

\begin{enumerate}
\item $\tilde{\mu}^{\Lambda_{1}\Lambda_{2}\ldots\Lambda_{k}}$ is in
$L^{\infty}(\IR,\cM_{+}(T^{\ast}\IT^{d}\times\mathbb{S}\la\Lambda_1\ra\times \ldots \times\mathbb{S}\la\Lambda_{k}\ra))$ and all its
$x$-Fourier modes are in $\Lambda_{k}$;\smallskip

\item $\tilde{\mu}^{\Lambda_{1}\Lambda_{2}\ldots\Lambda_{k}}$ is invariant by
the $k+1$ flows, $\phi_{s}^{0}:(x,\xi,\eta)\mapsto(x+sdH(\xi),\xi,\eta
_{1},\ldots,\eta_{k})$, and $\phi_{s}^{l}:(x,\xi,\eta_{1},\ldots,\eta
_{k})\longmapsto(x+sd^{2}H(\sigma(\xi))\frac{\eta_{l}}{\left\vert \eta
_{l}\right\vert },\xi,\eta_{1},\ldots,\eta_{k})$ (where $l=1,\ldots,k$).

\end{enumerate}
Finally, we define the space $\cS_{\Lambda_{k}}^{k}$ which is the
class of smooth functions $a(x,\xi ,\eta_{1},\ldots,\eta_{k})$ on
$T^{\ast}\IT^{d}\times \la \Lambda_1\ra\times\ldots\times
\la\Lambda_k\ra$ that are

\begin{enumerate}
\item[(i)] smooth and compactly supported in $(x, \xi)\in T^{*}\IT^{d}$;

\item[(ii)] homogeneous of degree $0$ at infinity in each variable $\eta_{1},
\ldots, \eta_{k}$;

\item[(iii)] such that their non-vanishing $x$-Fourier coefficients correspond to
frequencies in $\Lambda_{k}$.
\end{enumerate}

\subsection{From step $k$ to step $k+1$ ($k\geq 1$)\label{s:rec}}

After step $k$, we leave untouched
the term $\sum_{1\leq l\leq k}
\sum_{\Lambda_{1}\supset\Lambda_{2}\supset\ldots\supset\Lambda_{l}}
\mu_{\Lambda_{l}}^{\Lambda_{1}\Lambda_{2}\ldots\Lambda_{l-1}}$ and decompose further  $\sum_{\Lambda
_{1}\supset\Lambda_{2}\supset\ldots\supset\Lambda_{k}}\mu^{\Lambda_{1}
\Lambda_{2}\ldots\Lambda_{k}}$. Using the positivity of $\tilde{\mu}^{\Lambda_{1}
\Lambda_{2}\ldots\Lambda_{k}}$, we use the procedure described in Section
\ref{s:decompo} to write
\begin{equation}\label{restriction}
\tilde{\mu}^{\Lambda_{1}\Lambda_{2}\ldots\Lambda_{k}}(\sigma,\cdot
)=\sum_{\Lambda_{k+1}\subset\Lambda_{k}}\tilde{\mu}^{\Lambda_{1}\Lambda
_{2}\ldots\Lambda_{k}}\rceil_{\eta_{k}\in R_{\Lambda_{k+1}}^{\Lambda_{k}
}(\sigma)},
\end{equation}
where the sum runs over all primitive submodules $\Lambda_{k+1}$ of
$\Lambda_{k}$. Moreover, by Proposition~\ref{prop:decomposition}, all the
$x$-Fourier modes of $\tilde{\mu}^{\Lambda_{1}\Lambda_{2}\ldots\Lambda_{k}
}\rceil_{\eta_{k}\in R_{\Lambda_{k+1}}^{\Lambda_{k}
}(\sigma)}$ are in $\Lambda_{k+1}$.
To generalize the analysis of Section \ref{s:second}, we consider
test functions in $\cS_{\Lambda_{k+1}}^{k+1}$. We
let
$$\displaylines{\qquad
w_{h,R_{1},\ldots,R_{k+1}}^{\Lambda_{1}\Lambda_{2}\ldots
\Lambda_{k+1}}\left(  t,x,\xi,\eta_1,\cdots,\eta_{k+1}\right)
:=
\left(  1-\chi\left(  {\eta_{k+1}\over R_{k+1}} \right)  \right)  \hfill\cr\hfill \times\,
 w^{\Lambda_1\Lambda_2\cdots\Lambda_k}_{h,R_1,\cdots,R_k}(t,x,\xi,\eta_1,\cdots,\eta_k)\otimes\delta_{ P^{\xi}_{\Lambda_{k+1}}(\eta_k)}(\eta_{k+1})   ,\qquad\cr}
 $$
and
$$\displaylines{\qquad
 w_{\Lambda_{k+1}h,R_{1},\ldots,R_{k+1}}^{\Lambda_{1}\Lambda_{2}\ldots
\Lambda_{k}}\left(  t,x,\xi,\eta_1,\cdots,\eta_{k+1}\right)
:=
\chi\left(  { \eta_{k+1}\over R_{k+1}}\right)    \hfill\cr\hfill \times\,
 w^{\Lambda_1\Lambda_2\cdots\Lambda_k}_{h,R_1,\cdots,R_k}(t,x,\xi,\eta_1,\cdots,\eta_k)\otimes\delta_{ P^{\xi}_{\Lambda_{k+1}}(\eta_k)}(\eta_{k+1} )  .\qquad\cr}
$$

By the Calder\'{o}n-Vaillancourt theorem, both $w_{\Lambda_{k+1}
,h,R_{1},\ldots,R_{k}}^{\Lambda_{1}\Lambda_{2}\ldots\Lambda_{k}}$ and
$w_{h,R_{1},\ldots,R_{k}}^{\Lambda_{1}\Lambda_{2}\ldots\Lambda_{k+1}}$ are
bounded in $L^{\infty}(\IR,(\cS_{\Lambda_{k+1}}^{k+1})^{\prime}).$ After
possibly extracting subsequences, we can take the following limits~:
\[
\lim_{R_{k+1}\To+\infty}\cdots\lim_{R_{1}\To+\infty}\lim_{h\To0}\left\langle
w_{h,R_{1},\ldots,R_{k}}^{\Lambda_{1}\Lambda_{2}\ldots\Lambda_{k+1}}\left(
t\right)  ,a\right\rangle =:\left\langle \tilde{\mu}^{\Lambda_{1}\Lambda
_{2}\ldots\Lambda_{k+1}}(t),a\right\rangle ,
\]
and
\[
\lim_{R_{k+1}\To+\infty}\cdots\lim_{R_{1}\To+\infty}\lim_{h\To0}\left\langle
w_{\Lambda_{k+1},h,R_{1},\ldots,R_{k}}^{\Lambda_{1}\Lambda_{2}\ldots
\Lambda_{k}}\left(  t\right)  ,a\right\rangle =:\left\langle \tilde{\mu
}_{\Lambda_{k+1}}^{\Lambda_{1}\Lambda_{2}\ldots\Lambda_{k}}(t),a\right\rangle
.
\]
Then the properties listed in the preceeding subsection  are  a direct generalisation of Theorems~\ref{mu^Lambda} and~\ref{Thm Properties} (see also~\cite{AnantharamanMacia}, Section 4) and writing
\begin{eqnarray}\label{decomposition}
\widetilde\mu^{\Lambda_1\Lambda_2\cdots \Lambda_k}(t,.)\rceil_{\eta_{k}\in R_{\Lambda_{k+1}}^{\Lambda_{k}
}(\sigma)} & = &
\int_{\langle \Lambda_{k+1}\rangle}
\widetilde \mu^{\Lambda_1\Lambda_2\cdots \Lambda_{k+1}}(t,.,d\eta_{k+1})
\rceil_{\eta_{k}\in R_{\Lambda_{k+1}}^{\Lambda_{k}}(\sigma)} \\
\nonumber &  & +
\int_{\langle \Lambda_{k+1}\rangle}
\widetilde \mu^{\Lambda_1\Lambda_2\cdots \Lambda_{k}}_{\Lambda_{k+1}}(t,.,d\eta_{k+1})
\rceil_{\eta_{k}\in R_{\Lambda_{k+1}}^{\Lambda_{k}}(\sigma)} .
\end{eqnarray}

\begin{remark}
By construction, if $\Lambda_{k+1}=\{0\}$, we have $\tilde\mu^{\Lambda
_{1}\Lambda_{2}\ldots\Lambda_{k+1}}=0$, and the induction stops. Similarly to
Remark \ref{r:nice}, one can also see that if $\operatorname*{rk}
\Lambda_{k+1}=1$, the invariance properties of $\tilde\mu^{\Lambda_{1}
\Lambda_{2}\ldots\Lambda_{k+1}}$ imply that it is constant in $x$.
\end{remark}

\begin{remark}
Note that in the preceding definition of $k$-microlocal Wigner transform for $k\geq 1$, we did not use a parameter $\delta$ tending to $0$ as we did when $k=0$ in order to isolate the part of the limiting measures supported above $R^{\Lambda_k}_{\Lambda_{k+1}}(\sigma)$. This comes directly from the restrictions made in~(\ref{restriction}) and~(\ref{decomposition}).
\end{remark}

\subsection{Proof of  Theorem \ref{t:precise}}

This iterative procedure allows to decompose $\mu$ along decreasing sequences of submodules. In particular, when $\tau_h\sim 1/h$, it implies Theorem~\ref{t:precise}.
 Indeed, to end the proof of Theorem
\ref{t:precise}, we let
\begin{eqnarray*}
\mu_{\Lambda}(t,\cdot)&=&\sum_{0\leq k\leq d}\sum_{\Lambda_{1}\supset
\Lambda_{2}\supset\cdots\supset\Lambda_{k}\supset\Lambda}
\mu_{\Lambda}^{\Lambda_{1}\Lambda_{2}\ldots\Lambda_{k}}(t,\cdot)\\
&=& \sum_{0\leq k\leq d}\sum_{\Lambda_{1}\supset
\Lambda_{2}\supset\cdots\supset\Lambda_{k}\supset\Lambda}\int_{ R_{\Lambda_{2}}^{\Lambda_{1}}(\xi)\times\ldots\times R_{\Lambda_{}
}^{\Lambda_{k}}(\xi)\times \la\Lambda\ra}
\tilde{\mu}_{\Lambda_{}}^{\Lambda
_{1}\Lambda_{2}\ldots\Lambda_{k}}(t,\cdot,d\eta_{1},\ldots,d\eta_{k}
)\rceil_{\T^d\times R_{\Lambda_1}},
\end{eqnarray*}
where $\Lambda_{1},\ldots,\Lambda_{k}$ run over the set of strictly decreasing
sequences of submodules ending with $\Lambda$. We know that $\mu_{\Lambda}^{\Lambda_{1}\Lambda_{2}\ldots\Lambda_{k}}$ is supported on $\{\xi \in I_{\Lambda_1}\}$, and since $\Lambda
\subset \Lambda_1$ we have  $ I_{\Lambda_1}\subset I_{\Lambda}.$
We also let
\[
\rho_{\Lambda}(\sigma)=\sum_{0\leq k\leq d}\sum_{\Lambda_{1}\supset\Lambda
_{2}\supset\cdots\supset\Lambda_{k}\supset\Lambda}\int_{R_{\Lambda_{2}}^{\Lambda_{1}}(\xi)\times\ldots\times R_{\Lambda_{}
}^{\Lambda_{k}}(\xi) }\tilde{\rho}_{\Lambda
}^{\Lambda_{1}\Lambda_{2}\ldots\Lambda_{k}}\left(  \sigma,d\eta_{1},\ldots
,d\eta_{k}\right) \rceil_{\sigma\in R_{\Lambda_1}} ,
\]
where the $\tilde{\rho}_{\Lambda}^{\Lambda_{1}\Lambda_{2}\ldots\Lambda_{k}}$
are the operator-valued measures appearing in \S\ref{s:stepk}.

As already mentioned, Theorem~\ref{t:precise} implies Theorem \ref{t:main} in the case $\tau_h\sim 1/h$. The proof of Theorem~\ref{t:main} in the case $\tau_h \ll 1/h$ is discussed in Section \ref{s:halpha} and in the case $\tau_h\gg 1/h$, in Section \ref{s:hierarchy}.

%%%%%%%%%%%%%%%%%%%
%%%%%%%%%%%%%%%%%%%
\section{Some examples of singular concentration\label{s:halpha}}

\subsection{Singular concentration for time scales $\tau_{h}\ll1/h$}

In this section, we focus on the case $\tau_{h}\ll1/h$ and prove
Theorem~\ref{t:main}(1).

Consider $\rho\in\cS\left(  \mathbb{R}^{d}\right)  $ with $\left\Vert \rho\right\Vert
_{L^{2}\left(  \mathbb{R}^{d}\right)  }=1$ and such that the Fourier transform $\widehat{\rho}$ is compactly supported. Let $\left(  x_{0},\xi_{0}\right)
\in\mathbb{R}^{d}\times\mathbb{R}^{d}$ and $\left(  \varepsilon_{h}\right)  $
a sequence of positive real numbers that tends to zero as $h\longrightarrow
0^{+}$. Form the wave-packet:%
\begin{equation}
v_{h}\left(  x\right)  :=\frac{1}{\left(  \varepsilon_{h}\right)  ^{d/2}}%
\rho\left(  \frac{x-x_{0}}{\varepsilon_{h}}\right)  e^{i\frac{\xi_{0}}{h}\cdot
x}. \label{e:defv_h}%
\end{equation}
Define
\[
u_{h}:=\mathbf{P}v_{h},
\]
where $\mathbf{P}$ denotes the periodization operator $\mathbf{P}v\left(
x\right)  :=\sum_{k\in\mathbb{Z}^{d}}v\left(  x+2\pi k\right)  $. Since $\rho$ is rapidly decrasing, we have
$\left\Vert u_{h}\right\Vert _{L^{2}\left(  \mathbb{T}^{d}\right)
}\Lim_{h\To 0} 1$. It
is not hard to check that $\left(  u_{h}\right)  $ is $h$-oscillatory.

Theorem \ref{t:main}(1) is a consequence of our next result.

\begin{proposition}
\label{prop:pWP}Let $\left(  \tau_{h}\right)  $ be such that $\lim
_{h\rightarrow0^{+}}h\tau_{h}=0$; suppose that $\varepsilon_{h}\gg h\tau_{h}$.
Then the Wigner distributions of the solutions $S_{h}^{\tau_{h}t}u_{h}$
converge weakly-$\ast$ in $L^{\infty}\left(  \mathbb{R};\mathcal{D}^{\prime
}\left(  T^{\ast}\mathbb{T}^{d}\right)  \right)  $ to $\mu_{\left(  x_{0}%
,\xi_{0}\right)  }$, defined by:%
\begin{equation}
\int_{T^{\ast}\mathbb{T}^{d}}a\left(  x,\xi\right)  \mu_{\left(  x_{0},\xi
_{0}\right)  }\left(  dx,d\xi\right)  =\lim_{T\rightarrow\infty}\frac{1}%
{T}\int_{0}^{T}a\left(  x_{0}+tdH\left(  \xi_{0}\right)  ,\xi_{0}\right)
dt,\quad\forall a\in\cC_{c}(T^{\ast}\T^{d}). \label{e:orbitm}%
\end{equation}

\end{proposition}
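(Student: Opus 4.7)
The strategy is: (i) reduce the computation from the torus to $\R^d$ using the rapid decay of $\rho$; (ii) use an exact Fourier-side version of Egorov's theorem, valid on the time scale $\tau_h$ because $H(hD_x)$ depends only on $\xi$, to convert the evolution of the state into a classical transport of the symbol; (iii) evaluate the resulting Wigner pairing explicitly using the wavepacket structure of $v_h$; and (iv) pass to the weak-$\ast$ limit in time via equidistribution. Step (i) is routine: since $\widehat\rho$ has compact support and $\rho\in\mathcal{S}(\R^d)$, the tails $\sum_{k\neq 0}v_h(\cdot+2\pi k)$ have $L^2$-norm $O(\varepsilon_h^N)$ for all $N$, so it suffices to analyse $\langle v_h, S_h^{-\tau_h t}\Op_h(a) S_h^{\tau_h t} v_h\rangle$ for test symbols $a\in\mathcal{C}_c^\infty(T^\ast\T^d)$.

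For step (ii), I would write $a(x,\xi)=\sum_l \widehat a_l(\xi) e^{il\cdot x}/(2\pi)^{d/2}$ and compute the Weyl symbol of the conjugate $S_h^{-\tau_h t}\Op_h(a)S_h^{\tau_h t}$: its $l$-th Fourier coefficient is $\widehat a_l(\xi)$ multiplied by $\exp\!\bigl(i(\tau_h t/h)[H(\xi+hl/2)-H(\xi-hl/2)]\bigr)$. The antisymmetric Taylor expansion
\[
H(\xi+hl/2)-H(\xi-hl/2)=h\,dH(\xi)\cdot l+\tfrac{h^3}{24}d^3H(\xi)(l,l,l)+\cdots
\]
identifies the leading phase $\exp(i\tau_h t\,dH(\xi)\cdot l)$ with $a\circ\phi_{\tau_h t}$, while the subleading terms contribute phases of size $O(\tau_h h^2|l|^3)$. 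Combined with the rapid decay of $\widehat a_l$ and Calder\'on--Vaillancourt, this yields
\[
S_h^{-\tau_h t}\Op_h(a)S_h^{\tau_h t}=\Op_h(a\circ\phi_{\tau_h t})+o_{\mathcal{L}(L^2)}(1),
\]
since $h\tau_h\to 0$ forces $\tau_h h^2\to 0$.

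For step (iii), I would compute $\langle v_h,\Op_h(a\circ\phi_{\tau_h t})v_h\rangle$ by plugging in the definition of $v_h$ and passing to the rescaled variables $x=x_0+\varepsilon_h u$, $y=x_0+\varepsilon_h w$, $\xi=\xi_0+(h/\varepsilon_h)\zeta$ in the Weyl-integral formula. The wavepacket phases $e^{\pm i\xi_0\cdot x/h}$ cancel, leaving an integral
\[
\frac{1}{(2\pi)^d}\iiint e^{i(u-w)\cdot\zeta}a\Bigl(x_0+\tau_h t\,dH\bigl(\xi_0+\tfrac{h}{\varepsilon_h}\zeta\bigr)+\varepsilon_h\tfrac{u+w}{2},\ \xi_0+\tfrac{h}{\varepsilon_h}\zeta\Bigr)\overline{\rho(u)}\rho(w)\,du\,dw\,d\zeta.
\]
A Taylor expansion $dH(\xi_0+(h/\varepsilon_h)\zeta)=dH(\xi_0)+(h/\varepsilon_h)d^2H(\xi_0)\zeta+\cdots$ generates an additional translation of order $\tau_h h/\varepsilon_h$ inside the first argument of $a$, and \emph{this is the only place where the hypothesis $\varepsilon_h\gg h\tau_h$ intervenes}: it guarantees that this shift tends to zero, so that by uniform continuity of $a$ the integrand may be replaced by $a(x_0+\tau_h t\,dH(\xi_0),\xi_0)$ up to $o(1)$. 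Plancherel's identity in $\zeta$ (equivalently, $\int e^{i(u-w)\cdot\zeta}\,d\zeta=(2\pi)^d\delta(u-w)$) then collapses the remaining integral to $\|\rho\|_{L^2}^2=1$, giving $\langle v_h,\Op_h(a\circ\phi_{\tau_h t})v_h\rangle=a(x_0+\tau_h t\,dH(\xi_0),\xi_0)+o(1)$.

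Finally, for step (iv) and any $\varphi\in L^1(\R)$, expanding $a$ in $x$-Fourier modes gives
\[
\int_\R\varphi(t)\,a(x_0+\tau_h t\,dH(\xi_0),\xi_0)\,dt=\sum_{k\in(\Z^d)^*}\frac{\widehat a_k(\xi_0)}{(2\pi)^{d/2}}e^{ik\cdot x_0}\int_\R\varphi(t)e^{i\tau_h t\,k\cdot dH(\xi_0)}\,dt;
\]
as $\tau_h\to\infty$, terms with $k\cdot dH(\xi_0)\neq 0$ vanish by Riemann--Lebesgue, while the surviving modes ($k\in\Lambda_{\xi_0}$) recombine into $\int_\R\varphi(t)\,dt\cdot\int_{T^\ast\T^d}a\,d\mu_{(x_0,\xi_0)}$, using the identification of $\mu_{(x_0,\xi_0)}$ as the Haar measure on the orbit closure $\overline{\{x_0+sdH(\xi_0)\}}$ tensored with $\delta_{\xi_0}$. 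The main technical delicacy is the long-time Egorov estimate in step (ii): it remains valid on the scale $\tau_h\gg 1$ only because $H$ depends on $\xi$ alone, and would fail under an $x$-dependent perturbation beyond the Ehrenfest time.
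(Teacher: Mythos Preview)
Your proof is correct and takes a genuinely different route from the paper's. The paper stays on the torus throughout: it first uses the decomposition of \S\ref{s:decompo} to observe that the semiclassical measure of $(u_h)$ is $\delta_{x_0}\otimes\delta_{\xi_0}$, hence $\mu=\mu_{\Lambda_{\xi_0}}$, and so it suffices to test against trigonometric-polynomial symbols $a$ whose $x$-Fourier modes lie in $\Lambda_{\xi_0}$. For such symbols $dH(\xi_0)\cdot(k-j)=0$ automatically, so after applying the \emph{integrated} Egorov estimate (Lemma~\ref{Lemma Inv}) and Poisson summation, the argument of $\widehat\varphi$ in the explicit Fourier formula is directly $O(\tau_h h/\varepsilon_h)\to0$; the orbit average then drops out with no Riemann--Lebesgue step. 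Your approach instead lifts to $\R^d$, proves the pointwise-in-$t$ asymptotic $\langle w_h(t),a\rangle=a(x_0+\tau_h t\,dH(\xi_0),\xi_0)+o(1)$, and performs the averaging a posteriori. This is more self-contained (it does not invoke the resonant decomposition), and it makes transparent that the only role of $\varepsilon_h\gg h\tau_h$ is to kill the second-order drift $\tau_h t\,(h/\varepsilon_h)d^2H(\xi_0)\zeta$.

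Two technical points deserve care. First, the $o(1)$ you obtain in steps~(ii)--(iii) depends polynomially on $|t|$ (each semiclassical $\xi$-derivative of the error symbol brings down a factor $h\tau_h|t|$, and the Taylor shift in step~(iii) is $O(|t|\tau_h h/\varepsilon_h)$); the passage to general $\varphi\in L^1(\R)$ in step~(iv) therefore needs dominated convergence, using that the Wigner pairings are uniformly bounded. Second, the ``Plancherel'' collapse $\int e^{i(u-w)\cdot\zeta}\,d\zeta=(2\pi)^d\delta(u-w)$ should be read as first performing the $u,w$-integrals (which produce $\widehat\rho$-type factors localising $\zeta$), since after your Taylor replacement the integrand is constant in $\zeta$ on a set of diverging volume and the $\zeta$-integral is not absolutely convergent on its own.
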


\begin{proof}
Start noticing that the sequence $\left(  u_{h}\right)  $ has the unique
semiclassical measure $\mu_{0}=\delta_{x_{0}}\otimes\delta_{\xi_{0}}$. Using
property (4) in the appendix, we deduce that the image $\overline{\mu}$ of
$\mu\left(  t,\cdot\right)  $ by the projection from $\mathbb{T}^{d}%
\times\mathbb{R}^{d}$ onto $\mathbb{R}^{d}$ satisfies:%
\[
\overline{\mu}=\sum_{\Lambda\in\mathcal{L}}\overline{\mu_{\Lambda}}%
=\delta_{\xi_{0}}.
\]
Since for every primitive module $\Lambda\subset\mathbb{Z}^{d}$ the positive
measure $\overline{\mu_{\Lambda}}$ is supported on $R_{\Lambda}$, and these
sets form a partition of $\mathbb{R}^{d}$, we conclude that $\overline
{\mu_{\Lambda}}=0$ unless $\Lambda=\Lambda_{\xi_{0}}$ and therefore $\mu
=\mu_{\Lambda_{\xi_{0}}}$. Therefore, in order to characterize $\mu$ it
suffices to test it against symbols with Fourier coefficients in $\Lambda
_{\xi_{0}}$. Let $a\in\cC_{c}^{\infty}\left(  T^{\ast}\mathbb{T}^{d}\right)  $
be such a symbol; we can restrict our attention to the case where $a$ is a trigonometric polynomial in $x$. Let $\varphi\in L^{1}\left(  \mathbb{R}\right)  $.
Recall that, by Lemma \ref{Lemma Inv}, the Wigner distributions $w_{h}\left(
t\right)  $ of $S_{h}^{\tau_{h}t}u_{h}$ satisfy%
\[
\int_{\mathbb{R}}\varphi\left(  t\right)  \left\langle w_{h}\left(  t\right)
,a\right\rangle dt=\int_{\mathbb{R}}\varphi\left(  t\right)  \left\langle
w_{h}\left(  0\right)  ,a\circ\phi_{\tau_{h}t}\right\rangle dt+o\left(
1\right)  ;
\]
moreover the Poisson summation formula ensures that the Fourier coefficients
of $u_{h}$ are given by:%
\[
\widehat{u_{h}}\left(  k\right)  =\frac{\left(  \varepsilon_{h}\right)
^{d/2}}{\left(  2\pi\right)  ^{d/2}}\widehat{\rho}\left(  \frac{\varepsilon
_{h}}{h}\left(  hk-\xi_{0}\right)  \right)  e^{-i\left(  k-\xi
_{0}/h\right)  \cdot x_{0}}.
\]
Combining this with the explicit formula (\ref{e:Weylq}) for the Wigner
distribution presented in the appendix we get:%
\begin{equation}%
\begin{split}
\int_{\mathbb{R}}\varphi\left(  t\right)  \left\langle w_{h}\left(  t\right)
,a\right\rangle dt=\frac{\left(  \varepsilon_{h}\right)  ^{d}}{\left(
2\pi\right)  ^{3d/2}}  &  \sum_{k-j\in\Lambda_{\xi_{0}}}\widehat{\varphi
}\left(  \tau_{h}dH\left(  h\frac{k+j}{2}\right)  \cdot\left(  k-j\right)
\right)  \widehat{a}_{j-k}\left(  h\frac{k+j}{2}\right) \\
&  \widehat{\rho}\left(  \frac{\varepsilon_{h}}{h}\left(  hk-\xi_{0}\right)
\right)  \overline{\widehat{\rho}\left(  \frac{\varepsilon_{h}}{h}\left(
hj-\xi_{0}\right)  \right)  }e^{-i\left(  k-j\right)  \cdot x_{0}}+o\left(
1\right)  .
\end{split}
\label{e:wignWP}%
\end{equation}
Now, since $k-j\in\Lambda_{\xi_{0}}$ we can write:%
\begin{align*}
\left\vert dH\left(  h\frac{k+j}{2}\right)  \cdot\left(  k-j\right)
\right\vert  &  =\left\vert \left[  dH\left(  h\frac{k+j}{2}\right)
-dH\left(  \xi_{0}\right)  \right]  \cdot\left(  k-j\right)  \right\vert \\
&  \leq C\left\vert h\frac{k+j}{2}-\xi_{0}\right\vert \left\vert
k-j\right\vert .
\end{align*}
By hypothesis, both $\widehat{\rho}$ and $k\longmapsto\widehat{a_{k}}\left(
\xi\right)  $ are compactly supported, and hence the sum (\ref{e:wignWP}) only involves terms satisfying:%
\[
\frac{\varepsilon_{h}}{h}\left\vert h\frac{k}{2}-\xi_{0}\right\vert \leq
R,\;\frac{\varepsilon_{h}}{h}\left\vert h\frac{j}{2}-\xi_{0}\right\vert \leq
R\text{ and }\left\vert j-k\right\vert \leq R
\]
for some fixed $R$.
This in turn implies%
\[
\left\vert \tau_{h}dH\left(  h\frac{k+j}{2}\right)  \cdot\left(  k-j\right)
\right\vert \leq CR^{2}\frac{\tau_{h}h}{\varepsilon_{h}}.
\]
This shows that the limit of (\ref{e:wignWP}) as $h\longrightarrow0^{+}$
coincides with that of:%
\begin{align*}
&  \frac{\left(  \varepsilon_{h}\right)  ^{d}}{\left(  2\pi\right)  ^{3d/2}%
}\sum_{k-j\in\Lambda_{\xi_{0}}}\widehat{\varphi}\left(  0\right)
a_{j-k}\left(  h\frac{k+j}{2}\right)  \widehat{\rho}\left(  \frac
{\varepsilon_{h}}{h}\left(  hk-\xi_{0}\right)  \right)  \overline
{\widehat{\rho}\left(  \frac{\varepsilon_{h}}{h}\left(  hj-\xi_{0}\right)
\right)  }e^{-i\left(  k-j\right)  \cdot x_{0}}\\
&  =\widehat{\varphi}\left(  0\right)  \left\langle w_{h}\left(  0\right)
,a\right\rangle ,
\end{align*}
which is precisely:%
\[
\widehat{\varphi}\left(  0\right)  a\left(  x_{0},\xi_{0}\right)
=\widehat{\varphi}\left(  0\right)  \lim_{T\rightarrow\infty}\frac{1}{T}%
\int_{0}^{T}a\left(  x_{0}+tdH\left(  \xi_{0}\right)  ,\xi_{0}\right)  dt,
\]
since $a$ has only Fourier modes in $\Lambda_{\xi_{0}}$.
\end{proof}

We next present a slight modification of the previous example in order to
illustrate the two-microlocal nature of the elements of $\widetilde
{\mathcal{M}}\left(  \tau\right)  $. Define now, for $\eta_{0}\in
\mathbb{R}^{d}$:%
\[
u_{h}\left(  x\right)  =\mathbf{P}\left[  v_{h}\left(  x\right)  e^{i\eta
_{0}/\left(  h\tau_{h}\right)  }\right]  ,
\]
where $v_{h}$ was defined in (\ref{e:defv_h}).

\begin{proposition}
\label{prop:Diracmasses} Suppose that $\lim_{h\rightarrow0^{+}}h\tau_{h}=0$
and $\varepsilon_{h}\gg h\tau_{h}$. Suppose moreover that $d^{2}H\left(
\xi_{0}\right)  $ is definite and that $\eta_{0}\in\left\langle \Lambda
_{\xi_{0}}\right\rangle $. Then the Wigner distributions of $S_{h}^{\tau_{h}
t}u_{h}$ converge weakly-$\ast$ in $L^{\infty}\left(  \mathbb{R}
;\mathcal{D}^{\prime}\left(  T^{\ast}\mathbb{T}^{d}\right)  \right)  $ to the
measure:
\[
\mu\left(  t,\cdot\right)  =\mu_{\left(  x_{0}+td^{2}H\left(  \xi_{0}\right)
\eta_{0},\xi_{0}\right)  },\qquad t\in\mathbb{R},
\]
where $\mu_{\left(  x_{0},\xi_{0}\right)  }$ is the uniform orbit measure
defined in (\ref{e:orbitm}).
\end{proposition}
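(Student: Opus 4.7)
My plan is to follow the proof of Proposition \ref{prop:pWP} closely, the only new ingredient being a second-order Taylor expansion of $dH$ around $\xi_0$ that captures the effect of the extra momentum $\eta_0/\tau_h$. First, by Poisson summation the Fourier coefficients of $u_h$ read
$$\widehat{u_h}(k)=\frac{\varepsilon_h^{d/2}}{(2\pi)^{d/2}}\,\widehat{\rho}\!\left(\frac{\varepsilon_h}{h}\bigl(hk-\xi_0-\eta_0/\tau_h\bigr)\right)e^{-i(k-\xi_0/h-\eta_0/(h\tau_h))\cdot x_0},$$
so the localization peak is merely displaced by $\eta_0/\tau_h\to 0$; in particular $(u_h)$ remains normalised and $h$-oscillating with initial semiclassical measure $\delta_{x_0}\otimes\delta_{\xi_0}$. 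As in Proposition \ref{prop:pWP}, only the component of $\mu(t,\cdot)$ on $R_{\Lambda_{\xi_0}}$ survives, so it suffices to test against symbols $a\in\cC_c^\infty(T^*\T^d)$ whose $x$-Fourier modes lie in $\Lambda_{\xi_0}$, and which are trigonometric polynomials in $x$.

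Applying Lemma \ref{Lemma Inv} and the Weyl formula as in \eqref{e:wignWP} expresses $\int_\R\varphi(t)\la w_h(t),a\ra dt$ as
\begin{align*}
\frac{\varepsilon_h^d}{(2\pi)^{3d/2}}\!\!\sum_{k-j\in\Lambda_{\xi_0}}\!\!\widehat{\varphi}\!\left(\tau_h\,dH\!\bigl(h\tfrac{k+j}{2}\bigr)\!\cdot(k-j)\right)\!\widehat{a}_{j-k}\!\bigl(h\tfrac{k+j}{2}\bigr)\widehat{\rho}_{h,k}\,\overline{\widehat{\rho}_{h,j}}\,e^{-i(k-j)\cdot x_0}+o(1),
\end{align*}
where $\widehat{\rho}_{h,k}:=\widehat{\rho}\bigl(\varepsilon_h(k-\xi_0/h-\eta_0/(h\tau_h))\bigr)$. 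The decisive step is the Taylor expansion: since $dH(\xi_0)\cdot(k-j)=0$ for $k-j\in\Lambda_{\xi_0}$, and since compactness of $\widehat{\rho}$ constrains $|hk-\xi_0-\eta_0/\tau_h|\leq Rh/\varepsilon_h$ for some fixed $R$, we obtain $h(k+j)/2-\xi_0=\eta_0/\tau_h+O(h/\varepsilon_h)$, and therefore
$$\tau_h\,dH\!\bigl(h\tfrac{k+j}{2}\bigr)\!\cdot(k-j)=d^2H(\xi_0)\eta_0\cdot(k-j)+O\!\bigl(\tau_h h/\varepsilon_h\bigr)+O\!\bigl(\tau_h(h/\varepsilon_h)^2+1/\tau_h\bigr).$$
Under the joint hypotheses $h\tau_h\to 0$ and $\varepsilon_h\gg h\tau_h$, all remainders tend to zero, so the $\widehat{\varphi}$-argument converges to $d^2H(\xi_0)\eta_0\cdot(k-j)$.

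Finally, I would pass to the limit $h\to 0^+$: a standard Riemann-sum argument converts $\varepsilon_h^d\sum_k|\widehat{\rho}(\varepsilon_h k+\cdot)|^2$ into $(2\pi)^d\|\rho\|_{L^2}^2=(2\pi)^d$, the normalising constants cancel, and one is left with
$$\int_\R\varphi(t)\langle w_h(t),a\rangle\,dt\longrightarrow\int_\R\varphi(t)\sum_{\ell\in\Lambda_{\xi_0}}\widehat{a}_{-\ell}(\xi_0)\,\frac{e^{-i\ell\cdot(x_0+td^2H(\xi_0)\eta_0)}}{(2\pi)^{d/2}}\,dt=\int_\R\varphi(t)\,a\bigl(x_0+td^2H(\xi_0)\eta_0,\xi_0\bigr)\,dt.$$
Because $a$ has $x$-Fourier modes in $\Lambda_{\xi_0}$ it is invariant under the Hamiltonian flow $\phi_s^0$, so $a(x_0+td^2H(\xi_0)\eta_0,\xi_0)$ coincides with its orbit average, which is precisely $\mu_{(x_0+td^2H(\xi_0)\eta_0,\xi_0)}(a)$; this is the claim.

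The principal obstacle is bookkeeping the competing scales $h$, $\varepsilon_h$, $\tau_h$: the combined assumptions are tuned so that the leading term $\tau_h\cdot(\eta_0/\tau_h)=\eta_0$ produces a finite, non-trivial shift while the quadratic Taylor remainder $\tau_h(h/\varepsilon_h)^2$ still vanishes. The definiteness of $d^2H(\xi_0)$ together with the assumption $\eta_0\in\langle\Lambda_{\xi_0}\rangle$ guarantees, via Remark \ref{r:decRd}, that $d^2H(\xi_0)\eta_0$ is transverse to the orbit direction $\Lambda_{\xi_0}^\perp$, so that the spatial shift is genuinely visible in the quotient and the family $\{\mu_{(x_0+td^2H(\xi_0)\eta_0,\xi_0)}\}_{t\in\R}$ is non-trivial.
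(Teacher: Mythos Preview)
Your proof is correct and takes a genuinely different route from the paper's. The paper argues via the two-microlocal machinery of Section~\ref{s:second}: it first identifies the two-microlocal measure at $t=0$ as $\tilde\mu_{\Lambda_{\xi_0}}(0,\cdot)=\mu_{(x_0,\xi_0)}\otimes\delta_{\eta_0}$ by lifting the computation to $L^2(\R^d)$ (using the gain $h\tau_h$ in the calculus, Remark~\ref{rem:gain}), then invokes the propagation law of Proposition~\ref{prop:propagation} to transport this to time $t$, and finally observes that since $\tilde\mu_{\Lambda_{\xi_0}}(t,\cdot)$ already has total mass one, the term $\tilde\mu^{\Lambda_{\xi_0}}$ at infinity must vanish. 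Your argument bypasses this machinery entirely: you stay with the explicit Fourier representation \eqref{e:wignWP}, and the key new observation---that the first-order Taylor term $\tau_h\cdot d^2H(\xi_0)(\eta_0/\tau_h)\cdot(k-j)$ survives as a finite phase while the quadratic remainder $\tau_h(1/\tau_h+h/\varepsilon_h)^2$ still vanishes---is exactly what is needed. Your approach is more elementary and self-contained for this particular example; the paper's approach, by contrast, is designed to \emph{illustrate} the general framework (and makes explicit where the hypotheses $d^2H(\xi_0)$ definite and $\eta_0\in\langle\Lambda_{\xi_0}\rangle$ enter, namely in the identification $d\eta(\xi_0)\eta_0=\eta_0$ of the two-microlocal variable). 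Both are valid; yours is shorter, theirs connects the example to the structural Theorems~\ref{mu^Lambda} and~\ref{Thm Properties}.
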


\begin{proof}
The same argumenta we used in the proof of Proposition \ref{prop:pWP} gives
$\mu=\mu_{\Lambda_{\xi_{0}}}$. We claim that $w_{I_{\Lambda_{\xi_{0}}}
,h,R}\left(  0\right)  $ converges to the measure:
\[
\widetilde{\mu}_{\Lambda_{\xi_{0}}}\left(  0,x,\xi,\eta\right)  =\mu_{\left(
x_{0},\xi_{0}\right)  }\left(  x,\xi\right)  \delta_{\eta_{0}}\left(
\eta\right)  .
\]
Assume this is the case, then Proposition \ref{prop:propagation} implies:
\[
\widetilde{\mu}_{\Lambda_{\xi_{0}}}\left(  t,x,\xi,\eta\right)  =\mu_{\left(
x_{0}+td^{2}H\left(  \xi_{0}\right)  \eta_{0},\xi_{0}\right)  }\left(
x,\xi\right)  \delta_{\eta_{0}}\left(  \eta\right)  ,\quad\forall
t\in\mathbb{R},
\]
and, since $\widetilde{\mu}_{\Lambda_{\xi_{0}}}\left(  t,\cdot\right)  $ are
probability measures, it follows from Proposition \ref{p:decomposition} that
$\widetilde{\mu}^{\Lambda_{\xi_{0}}}=0$ and:
\[
\mu_{\Lambda_{\xi_{0}}}\left(  t,\cdot\right)  =\int_{\left\langle
\Lambda_{\xi_{0}}\right\rangle }\widetilde{\mu}^{\Lambda_{\xi_{0}}}\left(
t,\cdot,d\eta\right)  =\mu_{\left(  x_{0}+td^{2}H\left(  \xi_{0}\right)
\eta_{0},\xi_{0}\right)  }.
\]
Let us now prove the claim. Set
\[
\tilde{u}_{h}(x)=v_{h}\left(  x\right)  e^{i\eta_{0}/\left(  h\tau_{h}\right)
}.
\]
Consider $h_{0}>0$ and $\chi\in{\mathcal{C}}_{0}^{\infty}(\R^{d})$ such that
$\chi\tilde{u}_{h}=\tilde{u}_{h}$ for all $h\in(0,h_{0})$ and $\mathbf{P}
\chi^{2}\equiv1$. We now take $a\in{\mathcal{S}}_{\Lambda}^{1}$ and denote by
$\tilde{a}$ the smooth compactly supported function defined on $\R^{d}$ by
$\tilde{a}=\chi^{2}a$. Using the fact that the two-scale quantization admits
the gain $h\tau_{h}$ (see Remark~\ref{rem:gain}),
\begin{align*}
\langle u_{h}\;,\;\Op_{h}^{\Lambda_{\xi_{0}}}(a)u_{h}\rangle_{L^{2}(\T^{d})}
&  =\langle u_{h}\;,\;\Op_{h}^{\Lambda_{\xi_{0}}}(\tilde{a})u_{h}
\rangle_{L^{2}(\R^{d})}\\
&  =\langle\tilde{u}_{h}\;,\;\Op_{h}^{\Lambda_{\xi_{0}}}(a)\tilde{u}
_{h}\rangle_{L^{2}(\R^{d})}+O(h\tau_{h}).
\end{align*}
Therefore, it is possible to lift the computation of the limit of
$w_{I_{\Lambda_{\xi_{0}}},h,R}\left(  0\right)  $ to $T^{\ast}\R^{d}
\times\left\langle \Lambda_{\xi_{0}}\right\rangle $ and, in consequence,
replace sums by integrals. A direct computation gives:
$$\displaylines{\qquad
\langle\tilde{u}_{h},\Op_{h}^{\Lambda_{\xi_{0}}}(a)\tilde{u}_{h}\rangle
_{L^{2}(\R^{d})}   =(2\pi)^{-d}\int_{\mathbb{R}^{3d}}
\mathrm{e}^{i\xi\cdot\left(  x-y\right)  }\overline{\rho}
(x)\rho(y)\hfill\cr\hfill\times
a\left(  x_{0}
+\varepsilon_{h}\frac{x+y}{2},\xi_{0}+\frac{1}{\tau_{h}}\eta_{0}+{\frac
{h}{\varepsilon_{h}}}\xi,\tau_{h}\eta(\xi_{0}+\frac{1}{\tau_{h}}\eta
_{0}+{\frac{h}{\varepsilon_{h}}}\xi)\right)
dxdyd\xi.\qquad
\cr}$$
Note that if $F(\xi)=(\sigma,\eta)$, then
\[
\forall k\in\Lambda,\;\;F(\xi+k)=(\sigma,\eta+k)=F(\xi)+(0,k),
\]
which implies that $dF(\xi)k=(0,k)$ and $d\eta(\xi)k=k$ for all $k\in
\Lambda_{\xi_{0}}$. We deduce $d\eta(\xi_{0})\eta_{0}=\eta_{0}$ since
$\eta_{0}\in\langle\Lambda_{\xi_{0}}\rangle$ and, in view of $\eta(\xi_{0}
)=0$, a Taylor expansion of $\eta(\xi)$ around $\xi_{0}$ gives
\[
\tau_{h}\eta\left(  \xi_{0}+\frac{1}{\tau_{h}}\eta_{0}+{\frac{h}
{\varepsilon_{h}}}\xi\right)  =\eta_{0}+o(1).
\]
Therefore, as $h$ goes to $0$,
\[
\langle\tilde{u}_{h},\Op_{h}^{\Lambda_{{\xi_{0}}}}(a)\tilde{u}_{h}
\rangle\rightarrow a(x_{0},\xi_{0},\eta_{0})=\langle\widetilde{\mu}
_{\Lambda_{\xi_{0}}},a\rangle.
\]

\end{proof}

%%%%%%%%%%%%%%%%%%%%%%%%%%%%%%%%%%%%%%%%%%%%%%%%%%%%%%%%%

\subsection{Singular concentration for Hamiltonians with critical points}

We next show by a quasimode construction that for Hamiltonians having a
degenerate critical point (of order $k>2$) and for time scales $\tau_{h}\ll1/h^{k-1}$, the
set $\widetilde{\mathcal{M}}\left(  \tau\right)  $ always contains singular measures.

Suppose $\xi_{0}\in\mathbb{R}^{d}$ is such that:%
\[
dH\left(  \xi_{0}\right)  ,d^{2}H\left(  \xi_{0}\right)  ,...,d^{k-1}H\left(
\xi_{0}\right)  \quad\text{vanish identically.}%
\]
The Hamiltonian $H(\xi)=|\xi|^k$ ($k$ an even integer -- corresponding to the operator $(-\Delta)^{\frac{k}{2}}$) provides such an example
(with $\xi_{0}=0$). Let $u_{h}=\mathbf{P}v_{h}$, where $v_{h}$ is defined in
(\ref{e:defv_h}). If $\varepsilon_{h}\gg h$ it is not hard to see that
\[
\left\Vert H\left(  hD_{x}\right)  u_{h}-H\left(  \xi_{0}\right)
u_{h}\right\Vert _{L^{2}\left(  \mathbb{T}^{d}\right)  }=O\left(
h^{k}/\left(  \varepsilon_{h}\right)  ^{k}\right)  .
\]
Therefore,%
\[
\left\Vert S_{h}^{t}u_{h}-e^{-i\frac{t}{h}H\left(  \xi_{0}\right)  }%
u_{h}\right\Vert _{L^{2}\left(  \mathbb{T}^{d}\right)  }=tO\left(
h^{k-1}/\left(  \varepsilon_{h}\right)  ^{k}\right)  ,
\]
and, it follows that, for compactly supported $\varphi\in L^{1}(\R)$ and $a\in
\mathcal{C}_{c}^{\infty}\left(  T^{\ast}\mathbb{T}^{d}\right)  $,
\[
\int_{\mathbb{R}}\varphi(t)\langle w_{h}(t)\;,\;a\rangle dt=\int_{\mathbb{R}%
}\varphi(t)\left\langle u_{h},\Op_{h}(a)u_{h}\right\rangle _{L^{2}%
(\mathbb{T}^{d})}dt+O\left(  \tau_{h}h^{k-1}/\left(  \varepsilon_{h}\right)
^{k}\right)  .
\]
Choosing $\left(  \varepsilon_{h}\right)  $ tending to zero and such that
$\varepsilon_{h}\gg\left(  \tau_{h}h^{k-1}\right)  ^{1/k },$
the latter quantity converges to $a(x_{0},\xi_{0})\Vert\varphi\Vert
_{L^{1}(\R)}$ as $h\To0^{+}$. In other words,
\[
dt\otimes\delta_{x_{0}}\otimes\delta_{\xi_{0}}\in\widetilde{\mathcal{M}}%
(\tau),
\]
whence $dt\otimes\delta_{x_{0}}\in{\mathcal{M}}(\tau).$

%\texttt{[Examples: }$\left(  -\Delta\right)  ^{k}$\texttt{]}

\begin{remark}In the special case of $H(\xi)=|\xi|^k$ ($k$ an even integer), we know that the threshold $\tau_h^H$ is precisely $h^{1-k}$. From the discussion
of \S \ref{s:hierarchy} and previously known results about eigenfunctions of the laplacian, we know that the elements of ${\mathcal{M}}(\tau)$ are absolutely continuous for $\tau_h\gg 1/h^{k-1}$. In the case of $\tau_h= 1/h^{k-1}$, one can still show that elements of ${\mathcal{M}}(\tau)$ are absolutely continuous. This requires some extra work which consists in checking that all our proofs still work in this case for $\tau_h= 1/h^{k-1}$ and $\xi$ in a neighbourhood of $\xi_0=0$, replacing the Hessian $d^2H(\xi_0)$ by $d^k H(\xi_0)$,
and the assumption that the Hessian is definite by the remark that $\left[d^k H(\xi_0).\xi^k=0 \Longrightarrow \xi=0\right]$.

In the general case of a Hamiltonian having a degenerate critical point, the existence of such a threshold, and its explicit determination, is by no means obvious.
\end{remark}

\subsection{The effect of the presence of a subprincipal symbol}

Here we present some remarks concerning how the preceding results may change
when the Hamiltonian $H(hD_{x})$ is perturbed by a small potential $h^{\beta
}V(t,x)$. Suppose $V\in L^{\infty}\left(  \mathbb{R}\times\mathbb{T}%
^{d}\right)  $ and define%
\[
P_{\beta,h}:=H(hD_{x})+h^{\beta}V(t,x),\;\text{with}\;\beta>0\text{,}%
\]
and denote by $S_{\beta,h}^{t}$ the corresponding propagator (starting at
$t=0$):%
\[
S_{\beta,h}^{t}:=\mathrm{e}^{-i{\frac{t}{h}}P_{\beta,h}}.
\]
Let us fix a time scale $\tau=\left(  \tau_{h}\right)  $ that tends to
infinity as $h\longrightarrow0^{+}$. Define $\widetilde{\mathcal{M}}_{\beta
,V}\left(  \tau\right)  $ to be the set of accumulation points of the
time-scaled Wigner distributions
\[
w_{h}^{\beta}(t,\cdot)=w_{S_{\beta,h}^{\tau_{h}t}u_{h}}^{h},
\]
as $\left(  u_{h}\right)  $ varies among all normalised sequences in
$L^{2}(\T^{d})$. For the sake of simplicity, from now on we shall fix the time
scale $\tau_{h}=1/h$. The discussion that follows can be easily adapted to
more general time scales by changing the ranges of values of $\beta$.
%Let $\varphi(t)=\bbbone_{[0, T]}$ and consider
%$$\int \varphi(t)a(x,\xi)w_{h}(t,dx,d\xi)dt=\int_0^T \left\langle
%U_V(t)u_{h},\Op_{h}(a)U_V(t)u_{h}\right\rangle _{L^{2}(\mathbb{T}^{d})}dt=
%h\int_0^{T/h }\left\langle
%U_V(ht)u_{h},\Op_{h}(a)U_V(ht)u_{h}\right\rangle _{L^{2}(\mathbb{T}^{d})}dt.$$
%We have made the change of variable $t\mapsto ht$ in order to re-express everything in terms of the propagator $U_V(ht)$, which corresponds to the semiclassical Schr\"odinger equation
%\[
%ih\frac{\partial u}{\partial t}=\hat H_h
%u, \]
%with the semiclassical hamiltonian
%$$\hat H_h= -\frac{h^2}{2}\Delta+h^2V(t, x).$$
%With this rescaling of time, we see that we are studying the semiclassical Schr\"odinger equation averaged over the time interval $[0, T/h]$.

\begin{enumerate}
\item $\beta>2$. In this case it can be easily shown that $w_{h}^{\beta}$ and
$w_{h}$ have the same weak-$\ast$ accumulation points in $L^{\infty}\left(
\mathbb{R};\mathcal{D}^{\prime}\left(  T^{\ast}\mathbb{T}^{d}\right)  \right)
$. Therefore, the potential is a negligible perturbation and, in particular,
for every $V\in L^{\infty}\left(  \mathbb{R}\times\mathbb{T}^{d}\right)  $,%
\[
\widetilde{\mathcal{M}}_{\beta,V}\left(  1/h\right)  =\widetilde{\mathcal{M}%
}\left(  1/h\right)  .
\]

\item $\beta=2$. When $H(\xi)=|\xi|^{2}$, the question has been addressed
in~\cite{AnantharamanMacia}.\footnote{In that work, it is assumed that the set
of discontinuity points of $V$ has measure zero.} It turns out that whenever
$V$ is not constant,%
\[
\widetilde{\mathcal{M}}_{2,V}\left(  1/h\right)  \neq\widetilde{\mathcal{M}%
}\left(  1/h\right)  .
\]
In fact, the structure of $\widetilde{\mathcal{M}}_{2,V}\left(  1/h\right)  $
is similar to that of $\widetilde{\mathcal{M}}\left(  1/h\right)  $, but the
propagation law that replaces (\ref{e:mh2}) involves the propagator associated
to the averaged Hamiltonian $|\xi|^{2}+\left\langle V\right\rangle _{\Lambda
}\left(  t,x\right)  $.

\item $\beta<2$. In this case, it is possible to find potentials $V$ for which
Theorem \ref{t:main}(2) fails, \emph{i.e. }such that there exist $\mu
\in\widetilde{\mathcal{M}}_{\beta,V}\left(  1/h\right)  $ such that the
projection of $\mu$ on $x$ is not absolutely continuous with respect to
$dtdx$. The following example is due to Jared Wunsch. On the 2-dimensional
torus, take $H\left(  \xi\right)  =\left\vert \xi\right\vert ^{2}$ and
$V(x_{1},x_{2}):=W(x_{2})$ such that $W(x_{2})=\left(  x_{2}\right)  ^{2}/2$
in $\{|x_{2}|<1/2\}$. Take $\varepsilon\in(0,1)$ and
\[
u_{h}(x,y):=\frac{1}{{\pi}^{1/4}h^{\varepsilon/4}}e^{i\frac{x_{1}}{h}%
}e^{-\frac{\left(  x_{2}\right)  ^{2}}{2h^{\varepsilon}}}\chi(y),
\]
where $\chi$ is a smooth function that is equal to one in $\{|x_{2}|<1/4\}$
and identically equal to $0$ in $\{|x_{2}|>1/2\}$. One checks that
\[
\left(  -h^{2}\Delta+h^{2(1-\varepsilon)}V-1\right)  u_{h}=h^{2-\varepsilon
}u_{h}+\cO(h^{\infty}).
\]
It follows that for $\varphi\in L^{1}(\R)$ and $a\in\mathcal{C}_{c}^{\infty
}\left(  T^{\ast}\mathbb{T}^{2}\right)  $,
\begin{multline*}
\lim_{h\rightarrow0^{+}}\int_{\mathbb{R}}\varphi(t)\left\langle
S_{2(1-\epsilon),h}^{t/h}u_{h},\Op_{h}(a)S_{2(1-\epsilon),h}^{t/h}%
u_{h}\right\rangle _{L^{2}(\mathbb{T}^{2})}dt\\
=\lim_{h\rightarrow0^{+}}\int_{\mathbb{R}}\varphi(t)\left\langle u_{h}%
,\Op_{h}(a)u_{h}\right\rangle _{L^{2}(\mathbb{T}^{2})}dt=\left(
\int_{\mathbb{R}}\varphi\left(  t\right)  dt\right)  \int_{T^{\ast}%
\mathbb{T}^{2}}a\left(  x,\xi\right)  \mu\left(  dx,d\xi\right)  ,
\end{multline*}
and it is not hard to see that $\mu$ is concentrated on $\{x_{2}=0,\xi
_{1}=1,\xi_{2}=0\}$. In particular the image of $\mu$ by the projection to
$\T^{2}$ is supported on $\{x_{2}=0\}$.
\end{enumerate}

%%%%%%%%%%%%%%%%%%%%%%%%%%%%%%%%%%%%%%%%%%%%%%%%%%%%%%%%%%%%%

 %%%%%%%%%%%%%%%%%%%%%%%%%%%%%%%%%%%%%%%%%%%%%%%%%%%%%%%%%%%%%
%%%%%%%%%%%%%%%%%%%%%%%%%%%%%%%%%%%%%%%%%%%%%%%%%%%%%%%%%%%%%%

\section{Hierarchies of time scales\label{s:hierarchy}}

The following results makes explicit the relation between the sets
$\widetilde{\mathcal{M}}\left(  \tau\right)  $ as the time scale $\left(
\tau_{h}\right)  $ varies.

\begin{proposition}
\label{p:convsm}Let $\left(  \tau_{h}\right)  $ and $\left(  \sigma
_{h}\right)  $ be time scales tending to infinity as $h\longrightarrow0^{+}$
such that $\lim_{h\rightarrow0^{+}}\sigma_{h}/\tau_{h}=0$. Then for every
$\mu\in\widetilde{\mathcal{M}}\left(  \tau\right)  $ and almost every
$t\in\mathbb{R}$ there exist $\mu^{t}\in\overline{\operatorname{Conv}
\widetilde{\mathcal{M}}\left(  \sigma\right)  }$ such that
\begin{equation}
\mu\left(  t,\cdot\right)  =\int_{0}^{1}\mu^{t}\left(  s,\cdot\right)  ds.
\label{e:mutcc}
\end{equation}

\end{proposition}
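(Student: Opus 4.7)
My plan is to exploit the condition $\sigma_h \ll \tau_h$ to write the Wigner distribution at scale $\tau_h$ as a local time-average (on the shrinking scale $\sigma_h/\tau_h$) of Wigner distributions at scale $\sigma_h$. Let $\mu\in\widetilde{\mathcal M}(\tau)$ be obtained from a normalised sequence $(u_h)$, and set $w_h(t,\cdot):=w^{h}_{S_h^{\tau_h t}u_h}$. The change of variable $t'=t+(\sigma_h/\tau_h)s$ gives the exact identity
\[
\int_0^{1}w^{h}_{S_h^{\tau_h t+\sigma_h s}u_h}\,ds
=\frac{\tau_h}{\sigma_h}\int_{t}^{t+\sigma_h/\tau_h}w_h(t',\cdot)\,dt',
\]
and because $\sigma_h/\tau_h\to 0$, a Lebesgue differentiation / approximate-identity argument on the real line shows that the right-hand side has the same weak-$\ast$ limit in $L^{\infty}(\R;\mathcal D'(T^{*}\T^d))$ as $w_h$ itself, namely $\mu(t,\cdot)$.

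Next, introduce the two-parameter Wigner distribution
\[
W_h(t,s,\cdot):=w^{h}_{S_h^{\sigma_h s}v_h^{(t)}},\qquad v_h^{(t)}:=S_h^{\tau_h t}u_h,
\]
so that the left-hand side above equals $\int_0^{1}W_h(t,s,\cdot)\,ds$. Since $(W_h)$ is uniformly bounded in $L^{\infty}(\R\times\R;\mathcal D'(T^{*}\T^d))$, one can extract a subsequence along which $W_h\To W$ in the weak-$\ast$ sense. Combining this with the first step and using the test functions $\theta(t)\mathbf 1_{[0,1]}(s)\otimes a(x,\xi)$ yields, for every $\theta\in L^{1}(\R)$ and $a\in\mathcal C_c^{\infty}(T^{*}\T^d)$,
\[
\int_\R \theta(t)\langle\mu(t,\cdot),a\rangle\,dt=\int_\R\theta(t)\int_0^{1}\langle W(t,s,\cdot),a\rangle\,ds\,dt,
\]
so that, setting $\mu^{t}(s,\cdot):=W(t,s,\cdot)$, we obtain the desired decomposition $\mu(t,\cdot)=\int_0^{1}\mu^{t}(s,\cdot)\,ds$ for almost every $t\in\R$.

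It remains to prove that $\mu^{t}$ belongs to $\overline{\operatorname{Conv}\widetilde{\mathcal M}(\sigma)}$ for a.e.\ $t$. For each individual $t$, the sequence $(v_h^{(t)})$ is $L^{2}$-normalised and $W_h(t,\cdot,\cdot)$ is exactly its $\sigma_h$-scaled Wigner distribution, so any weak-$\ast$ accumulation point of $W_h(t,\cdot,\cdot)$ (along some subsequence that may depend on $t$) lies in $\widetilde{\mathcal M}(\sigma)$. To pass from ``slice-wise'' membership in $\widetilde{\mathcal M}(\sigma)$ to membership of the actual $\mu^{t}$ in the closed convex hull, I would argue by Hahn--Banach: if $\mu^{t_0}\notin\overline{\operatorname{Conv}\widetilde{\mathcal M}(\sigma)}$ on a set of positive measure, there is a test pair $(\varphi\otimes a)\in L^{1}(\R)\otimes\mathcal C_c^{\infty}(T^{*}\T^d)$ and $c\in\R$ such that $\langle\nu,\varphi a\rangle\le c$ for all $\nu\in\widetilde{\mathcal M}(\sigma)$ but $\langle\mu^{t_0},\varphi a\rangle>c$. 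Integrating in $t$ against a suitable positive weight and using the convergence $W_h\To W$ together with the fact that, along any sub-subsequence of $h$, the values $\langle W_h(t,\cdot,\cdot),\varphi a\rangle$ converge to quantities bounded by $c$ (being accumulation points in $\widetilde{\mathcal M}(\sigma)$) would produce a contradiction.

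The main obstacle I anticipate is precisely this last step: the weak-$\ast$ extraction that delivers $W$ is by construction uniform in $t$, whereas the natural realisation of $\mu^{t}$ as an element of $\widetilde{\mathcal M}(\sigma)$ requires, in principle, a $t$-dependent subsequence. This mismatch is exactly why the conclusion is stated with a closed convex hull rather than $\widetilde{\mathcal M}(\sigma)$ itself; the Hahn--Banach/separation step will have to be executed carefully in a locally convex setting in which $\overline{\operatorname{Conv}\widetilde{\mathcal M}(\sigma)}$ is closed, for instance on the space $L^{\infty}(\R;\mathcal D'(T^{*}\T^d))$ equipped with its weak-$\ast$ topology, tested against $L^{1}(\R)\otimes\mathcal C_c^{\infty}(T^{*}\T^d)$.
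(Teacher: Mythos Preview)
Your strategy is sound and shares the same skeleton as the paper's proof (time rescaling, Hahn--Banach separation, Lebesgue differentiation), but the organisation differs in a way that matters for the step you flag as the obstacle.

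The paper proceeds in three stages. First, an auxiliary lemma: any weak-$\ast$ accumulation point of convex combinations $\sum_n c_h^{(n)} w^h_{S_h^{\sigma_h t} v_h^{(n)}}$ (with $\sum_n c_h^{(n)}=1$ and each $(v_h^{(n)})$ normalised) lies in $\overline{\operatorname{Conv}\widetilde{\mathcal M}(\sigma)}$. The proof is by separation: if a functional $(\theta,a)$ and a level $c$ separate the limit from the closed convex hull, then for each $h$ some single index $n_h$ already violates the half-space inequality, and the diagonal sequence $(v_h^{(n_h)})$ produces an element of $\widetilde{\mathcal M}(\sigma)$ on the wrong side --- a contradiction. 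Second, partitioning $[\alpha,\beta]$ into $\lfloor(\beta-\alpha)\tau_h/\sigma_h\rfloor$ subintervals of length $\sim\sigma_h/\tau_h$ writes $\frac{1}{\beta-\alpha}\int_\alpha^\beta\mu(s,\cdot)\,ds$ as a limit of such discrete convex combinations, hence equal to $\int_0^1\mu_{\alpha,\beta}(s,\cdot)\,ds$ for some $\mu_{\alpha,\beta}\in\overline{\operatorname{Conv}\widetilde{\mathcal M}(\sigma)}$. Third, Lebesgue differentiation applied to the \emph{already extracted} limit $\mu$, together with sequential weak-$\ast$ compactness of the closed convex hull, yields $\mu^t$.

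Your two-parameter extraction replaces the discrete partition by a continuous one and moves the differentiation \emph{before} the limit (as an approximate-identity argument on $w_h$); both moves are legitimate. The gap you identify is genuine but fixable. To obtain a \emph{single} separating triple $(\varphi,a,c)$ that works on a set of $t$'s of positive measure, invoke separability of the predual so that $\overline{\operatorname{Conv}\widetilde{\mathcal M}(\sigma)}$ is a countable intersection of closed half-spaces; the bad set of $t$'s is then a countable union. For that fixed triple, set $g_h(t):=\langle W_h(t,\cdot,\cdot),\varphi\otimes a\rangle$; since each accumulation point of $W_h(t,\cdot,\cdot)$ lies in $\widetilde{\mathcal M}(\sigma)$, one has $\limsup_h g_h(t)\le c$ for every $t$. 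Uniform boundedness of $(g_h)$ and reverse Fatou give, for every $0\le\theta\in L^1$,
\[
\int\theta\,\langle W(t,\cdot,\cdot),\varphi\otimes a\rangle\,dt=\lim_h\int\theta g_h\le\int\theta\,\limsup_h g_h\le c\int\theta,
\]
hence $\langle W(t,\cdot,\cdot),\varphi\otimes a\rangle\le c$ a.e., the desired contradiction. The paper's discrete pigeonhole device sidesteps both the separability bookkeeping and the Fatou step; that is the practical payoff of its organisation, while yours has the appeal of a single joint extraction.
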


Before presenting the proof of this result, we shall need two auxiliary lemmas.

\begin{lemma}
\label{l:cconv}Let
$\left(  \sigma_{h}\right)  $ be a time scale tending to infinity as
$h\longrightarrow0^{+}$. Let $\left(v_{h}^{\left(  n\right)  }\right)_{h>0, n\in \N}$ be a normalised family in $L^{2}\left(
\mathbb{T}^{d}\right)$ and define:
\[
w_{h}^{\left(  n\right)  }\left(  t,\cdot\right)  :=w_{S_{h}^{\sigma_{h}
t}v_{h}^{\left(  n\right)  }}^{h}.
\]
Let $c_{h}^{\left(  n\right)  }\geq0$, $n\in \N$, be such that $\sum_{n\in
\N}c_{h}^{\left(  n\right)  }=1$.Then, every weak-$\ast$ accumulation point
in $L^{\infty}\left(  \mathbb{R};\mathcal{D}^{\prime}\left(  T^{\ast
}\mathbb{T}^{d}\right)  \right)  $ of
\begin{equation}
\sum_{n\in I_{h}}c_{h}^{\left(  n\right)  }w_{h}^{\left(  n\right)  }\left(
t,\cdot\right)  \label{e:cconv}
\end{equation}
belongs to $\overline{\operatorname{Conv}\widetilde{\mathcal{M}}\left(
\sigma\right)  }$.
\end{lemma}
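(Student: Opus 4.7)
I would approach this via Hahn-Banach geometric separation. Let $\mathcal{C}:=\overline{\operatorname{Conv}\widetilde{\mathcal{M}}(\sigma)}$, a convex and weak-$\ast$ closed subset of $L^{\infty}(\mathbb{R};\mathcal{D}'(T^{*}\mathbb{T}^{d}))$; the task is to verify that every weak-$\ast$ accumulation point $\mu$ of $\sum_{n\in I_{h}}c_{h}^{(n)}w_{h}^{(n)}$ lies in $\mathcal{C}$. By Hahn-Banach, this reduces to showing that for every weak-$\ast$ continuous linear functional $\ell$ on $L^{\infty}(\mathbb{R};\mathcal{D}'(T^{*}\mathbb{T}^{d}))$---equivalently, every functional of the form $\ell(\nu)=\int_{\mathbb{R}}\varphi(t)\langle\nu(t,\cdot),a\rangle dt$ with $\varphi\in L^{1}(\mathbb{R})$ and $a\in\mathcal{C}_{c}^{\infty}(T^{*}\mathbb{T}^{d})$---one has
\[
\ell(\mu)\le s_{\ell}:=\sup_{\nu\in\widetilde{\mathcal{M}}(\sigma)}\ell(\nu).
\]

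The key step, which I would prove by a contradiction argument combined with a diagonal extraction, is the following uniform upper bound: for every $\epsilon>0$ there exists $h_{0}>0$ such that
\[
\ell\bigl(w_{h}^{(n)}\bigr)\le s_{\ell}+\epsilon\qquad\text{for every }h\in(0,h_{0}]\text{ and every }n\in I_{h}.
\]
Indeed, if this fails one can extract sequences $h_{k}\to 0^{+}$ and $n_{k}\in I_{h_{k}}$ with $\ell(w_{h_{k}}^{(n_{k})})>s_{\ell}+\epsilon$. Then $(v_{h_{k}}^{(n_{k})})_{k}$ is itself a normalised family in $L^{2}(\mathbb{T}^{d})$, and the Calder\'on-Vaillancourt theorem gives a uniform bound for $w_{h_{k}}^{(n_{k})}=w^{h_{k}}_{S_{h_{k}}^{\sigma_{h_{k}}t}v_{h_{k}}^{(n_{k})}}$ in $L^{\infty}(\mathbb{R};\mathcal{D}'(T^{*}\mathbb{T}^{d}))$. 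After extracting a weak-$\ast$ convergent subsequence (Banach-Alaoglu) with limit $\nu_{*}$, one has $\nu_{*}\in\widetilde{\mathcal{M}}(\sigma)$ by the very definition of this set, and the weak-$\ast$ continuity of $\ell$ forces $\ell(\nu_{*})\ge s_{\ell}+\epsilon$, contradicting $\ell(\nu_{*})\le s_{\ell}$.

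Granted the uniform bound, linearity gives
\[
\ell\Bigl(\sum_{n\in I_{h}}c_{h}^{(n)}w_{h}^{(n)}\Bigr)=\sum_{n\in I_{h}}c_{h}^{(n)}\ell\bigl(w_{h}^{(n)}\bigr)\le s_{\ell}+\epsilon
\]
whenever $h\le h_{0}$ (here I use $\sum_{n\in I_{h}}c_{h}^{(n)}\le 1$ together with the fact that applying the same argument to $-\ell$ provides a matching lower bound $\ell(w_{h}^{(n)})\ge\inf_{\nu\in\widetilde{\mathcal{M}}(\sigma)}\ell(\nu)-\epsilon$, so that the sign of $s_{\ell}+\epsilon$ causes no trouble even if $\sum c_{h}^{(n)}<1$). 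Passing to the limit $h\to 0^{+}$ along the subsequence producing $\mu$, and then $\epsilon\to 0^{+}$, yields $\ell(\mu)\le s_{\ell}$, as required.

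The main obstacle---and the only non-routine ingredient---is the diagonal extraction that yields the uniform bound. A priori, one can imagine that mass in the convex combination $\sum_{n}c_{h}^{(n)}w_{h}^{(n)}$ escapes to $n=\infty$ as $h\to 0^{+}$ and creates, in the limit, a contribution outside $\mathcal{C}$. The essential point is that $\widetilde{\mathcal{M}}(\sigma)$, as defined, encompasses limits arising from \emph{every} normalised sequence of initial data; this is precisely what lets us re-absorb any such escaping mass into an element of $\widetilde{\mathcal{M}}(\sigma)$ and close off the gap.
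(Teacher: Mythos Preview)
Your proof is correct and follows essentially the same approach as the paper's: both use Hahn--Banach separation combined with a diagonal extraction to produce a single normalised sequence $(v_{h_k}^{(n_k)})$ whose time-scaled Wigner limit lies in $\widetilde{\mathcal{M}}(\sigma)$ yet violates the separating inequality. The paper phrases it as a global contradiction (assume $\tilde\mu\notin\mathcal{C}$, separate, then find $n_k$ along the realising subsequence with $\ell(w_{h_k}^{(n_k)})$ on the wrong side), while you establish the uniform bound $\ell(w_h^{(n)})\le s_\ell+\epsilon$ first and then sum---but the core mechanism is identical. Your parenthetical worry about the sign of $s_\ell+\epsilon$ when $\sum_{n\in I_h}c_h^{(n)}<1$ is in fact moot, since $0\in\widetilde{\mathcal{M}}(\sigma)$ (take data with frequency escaping to infinity), so $s_\ell\ge\ell(0)=0$.
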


\begin{proof}
Suppose (\ref{e:cconv}) possesses an accumulation point $\tilde{\mu}\in
L^{\infty}\left(  \mathbb{R};\mathcal{M}_{+}\left(  T^{\ast}\mathbb{T}
^{d}\right)  \right)  $ that does not belong to $\overline{\operatorname{Conv}
\widetilde{\mathcal{M}}\left(  \sigma\right)  }$. By the Hahn-Banach
theorem
applied to
the convex sets $\left\{  \tilde{\mu}\right\}  $ and $\overline
{\operatorname{Conv}\widetilde{\mathcal{M}}\left(  \sigma\right)  }$
we can ensure the existence of
$\varepsilon>0$, $a\in\mathcal{C}_{c}^{\infty}\left(  T^{\ast}\mathbb{T}
^{d}\right)  $ and $\theta\in L^{1}\left(  \mathbb{R}\right)  $ such that:
\[
\int_{\mathbb{R}}\theta\left(  t\right)  \left\langle \tilde{\mu}\left(
t,\cdot\right)  ,a\right\rangle dt<-\varepsilon<0,
\]
and,
\begin{equation}
\int_{\mathbb{R}}\theta\left(  t\right)  \left\langle \mu\left(
t,\cdot\right)  ,a\right\rangle dt\geq-{\varepsilon\over 3},\quad\forall\mu
\in\overline{\operatorname{Conv}\widetilde{\mathcal{M}}\left(  \sigma\right)
}.\label{e:pos}
\end{equation}
Suppose that $\tilde{\mu}$ is attained through a sequence $\left(
h_{k}\right)  $ tending to zero. For $k>k_{0}$ big enough,
\[
\int_{\mathbb{R}}\theta\left(  t\right)  \sum_{n\in I_{h_{k}}}c_{h_{k}
}^{\left(  n\right)  }\left\langle w_{h_{k}}^{\left(  n\right)  }\left(
t,\cdot\right)  ,a\right\rangle dt\leq-{3\over 2}\varepsilon,
\]
which implies that there exists $n_{k}\in \N$ such that:
\begin{equation}
\int_{\mathbb{R}}\theta\left(  t\right)  \left\langle w_{h_{k}}^{\left(
n_{k}\right)  }\left(  t,\cdot\right)  ,a\right\rangle dt\leq-{3\over 2}\varepsilon
.\label{e:neg}
\end{equation}
Therefore, every accumulation point of $\left(  w_{h_{k}}^{\left(
n_{k}\right)  }\right)  $ also satisfies (\ref{e:neg}) which contradicts
(\ref{e:pos}).
\end{proof}

\begin{lemma}
\label{l:mesint}Let $\tau$, $\sigma$ and $\mu$ be as in Proposition
\ref{p:convsm}. For every $\alpha<\beta$ there exists $\mu_{\alpha,\beta}
\in\overline{\operatorname{Conv}\widetilde{\mathcal{M}}\left(  \sigma\right)
}$ such that
\[
\frac{1}{\beta-\alpha}\int_{\alpha}^{\beta}\mu\left(  t,\cdot\right)
dt=\int_{0}^{1}\mu_{\alpha,\beta}\left(  t,\cdot\right)  dt.
\]

\end{lemma}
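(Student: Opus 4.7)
The plan is to realise the left-hand side as the limit of time-averaged Wigner distributions on the $\tau_h$ scale, and then rewrite this average as a convex combination of Wigner distributions built on the $\sigma_h$ scale, so that Lemma \ref{l:cconv} applies directly. Starting from a normalised sequence $(u_h)$ producing $\mu$, i.e.\ $w_h(t,\cdot)=w^h_{S_h^{\tau_h t}u_h}\To \mu$ in the weak-$\ast$ topology of $L^{\infty}(\R;\mathcal{D}'(T^{\ast}\T^{d}))$, I would change variables $s=\tau_h t$ to write, for every $a\in\mathcal{C}_c^{\infty}(T^{\ast}\T^{d})$,
$$\frac{1}{\beta-\alpha}\int_{\alpha}^{\beta}\langle w_h(t,\cdot),a\rangle\,dt=\frac{1}{(\beta-\alpha)\tau_h}\int_{\alpha\tau_h}^{\beta\tau_h}\langle w^h_{S_h^{s}u_h},a\rangle\,ds.$$

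Next, I would partition $[\alpha\tau_h,\beta\tau_h]$ into $N_h:=\lfloor(\beta-\alpha)\tau_h/\sigma_h\rfloor$ subintervals of length $\sigma_h$, with endpoints $s_n:=\alpha\tau_h+n\sigma_h$. Setting $v_h^{(n)}:=S_h^{s_n}u_h$ (still normalised in $L^2$ by unitarity of $S_h^{s_n}$) and using the group law $S_h^{s_n+r}u_h=S_h^{r}v_h^{(n)}$, the substitution $r=\sigma_h t$ gives
$$\frac{1}{\sigma_h}\int_{s_n}^{s_n+\sigma_h}\langle w^h_{S_h^{s}u_h},a\rangle\,ds=\int_0^1\langle w^h_{S_h^{\sigma_h t}v_h^{(n)}},a\rangle\,dt.$$
Because $\sigma_h/\tau_h\to 0$, the leftover interval $[\alpha\tau_h+N_h\sigma_h,\beta\tau_h]$ has length at most $\sigma_h$ and the discrepancy between $1/N_h$ and $\sigma_h/((\beta-\alpha)\tau_h)$ is of order $\sigma_h/\tau_h$; together these contribute only $o_h(1)$ to the average, so
$$\frac{1}{\beta-\alpha}\int_{\alpha}^{\beta}\langle w_h(t,\cdot),a\rangle\,dt=\frac{1}{N_h}\sum_{n=0}^{N_h-1}\int_0^1\langle w^h_{S_h^{\sigma_h t}v_h^{(n)}},a\rangle\,dt+o_h(1).$$

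Finally, I would introduce the convex combination $\tilde{w}_h(t,\cdot):=\sum_{n\in I_h}c_h^{(n)}w^h_{S_h^{\sigma_h t}v_h^{(n)}}$ with $I_h=\{0,\dots,N_h-1\}$ and $c_h^{(n)}=1/N_h$. This family is uniformly bounded in $L^{\infty}(\R;\mathcal{D}'(T^{\ast}\T^{d}))$, so after a diagonal extraction it admits a weak-$\ast$ accumulation point $\mu_{\alpha,\beta}$, which by Lemma \ref{l:cconv} lies in $\overline{\operatorname{Conv}\widetilde{\mathcal{M}}(\sigma)}$. Testing against $\mathbf{1}_{[0,1]}(t)\,a(x,\xi)$, which is a legitimate test function for weak-$\ast$ convergence in $L^{\infty}(\R;\mathcal{D}'(T^{\ast}\T^{d}))$, and passing to the limit in the identity above yields
$$\frac{1}{\beta-\alpha}\int_{\alpha}^{\beta}\langle\mu(t,\cdot),a\rangle\,dt=\int_0^1\langle\mu_{\alpha,\beta}(t,\cdot),a\rangle\,dt,$$
which is the claim, since $a$ is arbitrary.

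The only real obstacle is bookkeeping: one must verify that the boundary remainder and the integer-part correction indeed produce $o_h(1)$ in the chosen topology, and that the sequences $v_h^{(n)}$ are admissible inputs for Lemma \ref{l:cconv} (which only asks for $L^2$-normalisation, granted here for free from the unitarity of $S_h^{s_n}$). The genuine analytic content---the fact that an averaged convex combination of Wigner distributions has limits in $\overline{\operatorname{Conv}\widetilde{\mathcal{M}}(\sigma)}$---has already been extracted in Lemma \ref{l:cconv} via the Hahn--Banach argument, so no further semiclassical input is required here.
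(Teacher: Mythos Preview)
Your proof is correct and follows essentially the same route as the paper's: both rescale time to pass from the $\tau_h$-scale to the $\sigma_h$-scale, partition the resulting interval into pieces of length close to $1$, and invoke Lemma~\ref{l:cconv} on the resulting convex combination. The only cosmetic difference is bookkeeping: the paper chooses subintervals of length $\delta_h:=LN_h/\lfloor LN_h\rfloor\to 1$ so as to partition $[\alpha N_h,\beta N_h]$ exactly, whereas you use intervals of length exactly $1$ and absorb the leftover piece (and the $1/N_h$ versus $\sigma_h/((\beta-\alpha)\tau_h)$ discrepancy) into an $o_h(1)$ term; both are equally valid.
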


\begin{proof}
Let $\mu\in\widetilde{\mathcal{M}}\left(  \tau\right)  $. Then there exist an
$h$-oscillating, normalised sequence $\left(  u_{h}\right)  $ such that, for
every $\theta\in L^{1}\left(  \mathbb{R}\right)  $ and every $a\in
C_{c}^{\infty}\left(  T^{\ast}\mathbb{T}^{d}\right)  $:
\[
\lim_{h\rightarrow0^{+}}\int_{\mathbb{R}}\theta\left(  t\right)  \left\langle
S_{h}^{\tau_{h}t}u_{h},\Op_{h}(a)S_{h}^{\tau_{h}t}u_{h}\right\rangle
dt=\int_{\mathbb{R}}\theta\left(  t\right)  \left\langle \mu\left(
t,\cdot\right)  ,a\right\rangle dt.
\]
Write $N_{h}:=\tau_{h}/\sigma_{h}$; by hypothesis $N_{h}\longrightarrow\infty$
as $h\longrightarrow0^{+}$. Let $\alpha<\beta$, define $L:=\beta-\alpha$ and
put:
\[
\delta_{h}:=\frac{LN_{h}}{\left\lfloor LN_{h}\right\rfloor },\quad t_{n}
^{h}:=\alpha N_{h}+n\delta_{h},
\]
where $\left\lfloor LN_{h}\right\rfloor $ is the integer part of $LN_{h}$.
Then,
\begin{align*}
\frac{1}{L}\int_{\alpha}^{\beta}\left\langle S_{h}^{\tau_{h}t}u_{h}
,\Op_{h}(a)S_{h}^{\tau_{h}t}u_{h}\right\rangle _{L^{2}(\mathbb{T}^{d})}dt  &
=\frac{1}{LN_{h}}\int_{\alpha N_{h}}^{\beta N_{h}}\left\langle S_{h}
^{\sigma_{h}t}u_{h},\Op_{h}(a)S_{h}^{\sigma_{h}t}u_{h}\right\rangle
_{L^{2}(\mathbb{T}^{d})}dt\\
&  =\frac{1}{LN_{h}}\sum_{n=1}^{\left\lfloor LN_{h}\right\rfloor }
\int_{t_{n-1}^{h}}^{t_{n}^{h}}\left\langle S_{h}^{\sigma_{h}t}u_{h}
,\Op_{h}(a)S_{h}^{\sigma_{h}t}u_{h}\right\rangle _{L^{2}(\mathbb{T}^{d})}dt\\
&  =\frac{1}{LN_{h}}\sum_{n=1}^{\left\lfloor LN_{h}\right\rfloor }\int
_{0}^{\delta_{h}}\left\langle S_{h}^{\sigma_{h}t}v_{h}^{\left(  n\right)
},\Op_{h}(a)S_{h}^{\sigma_{h}t}v_{h}^{\left(  n\right)  }\right\rangle
_{L^{2}(\mathbb{T}^{d})}dt,
\end{align*}
where the functions $v_{h}^{\left(  n\right)  }:=S_{h}^{\sigma_{h}t_{n}^{h}
}u_{h}$ form, for each $n\in\mathbb{Z}$, a normalised sequence indexed by
$h>0$. The result then follows by Lemma \ref{l:cconv} and using the fact that
$\delta_{h}\longrightarrow1$ as $h\longrightarrow0^{+}$.
\end{proof}

\begin{proof}
[Proof of Proposition \ref{p:convsm}]Let
$\mu\in\widetilde{\mathcal{M}}\left( \tau\right)  $; an
application of the Lebesgue differentiation theorem gives the
existence of a countable dense set
$S\subset\mathcal{C}_{c}^{\infty}\left(
T^{\ast}\mathbb{T}^{d}\right)  $ and a set $N\subset\mathbb{R}$ of
measure
zero such that, for $a\in S$ and $t\in\mathbb{R}\setminus N$,
\begin{equation}
\lim_{\varepsilon\rightarrow0^{+}}\frac{1}{2\varepsilon}\int_{t-\varepsilon
}^{t+\varepsilon}\int_{T^{\ast}\mathbb{T}^{d}}a\left(  x,\xi\right)
\mu\left(  s,dx,d\xi\right)  ds=\int_{T^{\ast}\mathbb{T}^{d}}a\left(
x,\xi\right)  \mu\left(  t,dx,d\xi\right)  . \label{e:c1}
\end{equation}
Fix $t\in\mathbb{R}\setminus N$; then, for any $\varepsilon>0$ there exist
$\mu_{\varepsilon}^{t}\in\overline{\operatorname{Conv}\widetilde{\mathcal{M}
}\left(  \sigma\right)  }$ such that, for every $a\in\mathcal{C}_{c}^{\infty
}\left(  T^{\ast}\mathbb{T}^{d}\right)  $,
\begin{equation}
\frac{1}{2\varepsilon}\int_{t-\varepsilon}^{t+\varepsilon}\int_{T^{\ast
}\mathbb{T}^{d}}a\left(  x,\xi\right)  \mu\left(  s,dx,d\xi\right)
ds=\int_{0}^{1}\int_{T^{\ast}\mathbb{T}^{d}}a\left(  x,\xi\right)
\mu_{\varepsilon}^{t}\left(  s,dx,d\xi\right)  ds. \label{e:c2}
\end{equation}
Note that $\overline{\operatorname{Conv}\widetilde{\mathcal{M}}\left(
\sigma\right)  }$ is sequentially compact for the weak-$\ast$ topology,
therefore, there exist a sequence $\left(  \varepsilon_{n}\right)  $ tending
to zero and a $\mu^{t}\in\overline{\operatorname{Conv}\widetilde{\mathcal{M}
}\left(  \sigma\right)  }$ such that $\mu_{\varepsilon_{n}}^{t}$ converges
weakly-$\ast$ to $\mu^{t}$. Identities (\ref{e:c1}) and (\ref{e:c2}) ensure
that $\mu\left(  t,\cdot\right)  =\int_{0}^{1}\mu^{t}\left(  s,\cdot\right)
ds$.
\end{proof}

\begin{remark}
\label{r:cchosc}
Projecting on $x$ in identity (\ref{e:mutcc}) we deduce that given $\nu
\in\mathcal{M}\left(  \tau\right)  $ there exist $\nu^{t}\in\mathcal{M}\left(
\sigma\right)  $ such that:
\[
\nu\left(  t,\cdot\right)  =\int_{0}^{1}\nu^{t}\left(  s,\cdot\right)  ds.
\]
This, together with the fact that elements of $\mathcal{M}\left(  1/h\right)
$ are absolutely continuous imply the conclusion of Theorem \ref{t:main}(2)
when $\tau_{h}\gg1/h$.
\end{remark}

Denote by $\widetilde{\mathcal{M}}\left(  \infty\right)  $ the set of
weak-$\ast$ limit points of sequences of Wigner distributions $\left(
w_{u_{h}}\right)  $ corresponding to sequences $\left(  u_{h}\right)  $
consisting of normalised eigenfunctions of $H\left(  hD_{x}\right)  $. We now
focus on a family of time scales $\tau$ for which the structure of
$\widetilde{\mathcal{M}}\left(  \tau\right)  $ can be described in terms of
the closed convex hull of $\widetilde{\mathcal{M}}\left(  \infty\right)  $.
Given a measurable subset $O\subseteq\mathbb{R}^{d}$, we define:
\[
\tau_{h}^{H}\left(  O\right)  :=h\sup\left\{  \left\vert H\left(  hk\right)
-H\left(  hj\right)  \right\vert ^{-1}\;:\;H\left(  hk\right)  \neq H\left(
hj\right)  ,\;hk,hj\in h\mathbb{Z}^{d}\cap O\right\}  .
\]
Note that the scale $\tau_{h}^{H}$ defined in the introduction coincides with
$\tau_{h}^{H}\left(  \mathbb{R}^{d}\right)  $. The following holds.

\begin{proposition}
\label{p:muconv}Let $O\subseteq\mathbb{R}^{d}$ be an open set such that
$\tau_{h}^{H}\left(  O\right)  $ tends to infinity as $h\longrightarrow0^{+}$.
Suppose $\left(  \tau_{h}\right)  $ is a time scale such that $\lim
_{h\rightarrow0^{+}}\tau_{h}^{H}\left(  O\right)  /\tau_{h}=0$. If $\mu
\in\widetilde{\mathcal{M}}\left(  \tau\right)  $ is obtained trough a sequence
whose semiclassical measure satisfies $\mu_{0}\left(  \mathbb{T}^{d}
\times\left(  \mathbb{R}^{d}\setminus O\right)  \right)  =0$ then $\mu
\in\overline{\operatorname{Conv}\widetilde{\mathcal{M}}\left(  \infty\right)
}$.
\end{proposition}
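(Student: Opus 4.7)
The plan is to exploit the hypothesis $\tau_h\gg\tau_h^H(O)$ so that, after time-averaging the scaled Wigner distribution against any $\theta\in L^1(\R)$, only the diagonal contribution in the spectral decomposition of $u_h$ with respect to $H(hD_x)$ survives; this diagonal piece is, tautologically, a convex combination of Wigner distributions of normalised eigenfunctions. By Calder\'on--Vaillancourt, $\theta\mapsto\int\theta(t)\langle w_h(t),a\rangle\,dt$ is equi-continuous on $L^1(\R)$ for each $a\in\mathcal{C}_c^\infty(T^\ast\T^d)$, so it suffices to test against $\theta$ in the Paley--Wiener subspace $\mathcal{PW}:=\{\theta\in L^1(\R):\hat\theta\in\mathcal{C}_c(\R)\}$, which is $L^1$-dense (convolve with a Fej\'er-type kernel whose Fourier transform is compactly supported). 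Given $\epsilon>0$, the $h$-oscillation of $(u_h)$ together with inner regularity of $\bar\mu_0$ and the hypothesis $\bar\mu_0(\R^d\setminus O)=0$ yield $\chi_\epsilon\in\mathcal{C}_c^\infty(O)$ with $\int|\chi_\epsilon|^2\,d\bar\mu_0>1-\epsilon$; setting $u_h^\epsilon:=\chi_\epsilon(hD_x)u_h$, unitarity of $S_h^t$ gives $\sup_t|\langle w_{S_h^{\tau_h t}u_h}^h-w_{S_h^{\tau_h t}u_h^\epsilon}^h,a\rangle|\leq C_a\sqrt\epsilon$.

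Next, let $P_E^h$ denote the spectral projector of $H(hD_x)$ onto the eigenspace for eigenvalue $E$, and set $v_h^{E,\epsilon}:=P_E^hu_h^\epsilon$; since $u_h^\epsilon$ has Fourier support contained in $\operatorname{supp}(\chi_\epsilon)/h\subset O/h$, $v_h^{E,\epsilon}$ vanishes unless $E\in H(h\Z^d\cap O)$. Expanding $S_h^{\tau_h t}=\sum_E e^{-it\tau_h E/h}P_E^h$ gives
\[
\int\theta(t)\langle w_{u_h^\epsilon}(t),a\rangle\,dt=\sum_{E,E'}\hat\theta\!\left(\frac{\tau_h(E'-E)}{h}\right)\langle v_h^{E,\epsilon},\Op_h(a)v_h^{E',\epsilon}\rangle.
\]
For distinct $E,E'\in H(h\Z^d\cap O)$ the definition of $\tau_h^H(O)$ forces $|E-E'|\geq h/\tau_h^H(O)$, whence $|\tau_h(E'-E)/h|\geq \tau_h/\tau_h^H(O)\to\infty$; as $\hat\theta$ has compact support, every off-diagonal term vanishes identically for $h$ small enough, leaving
\[
\int\theta(t)\langle w_{u_h^\epsilon}(t),a\rangle\,dt=\hat\theta(0)\sum_E\|v_h^{E,\epsilon}\|^2\,\langle w_{\hat v_h^{E,\epsilon}}^h,a\rangle,
\]
with $\hat v_h^{E,\epsilon}:=v_h^{E,\epsilon}/\|v_h^{E,\epsilon}\|$ a normalised eigenfunction of $H(hD_x)$.

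Applying Lemma \ref{l:cconv} to the convex combination $\sum_E(\|v_h^{E,\epsilon}\|^2/\|u_h^\epsilon\|^2)w_{\hat v_h^{E,\epsilon}}^h$ produces (along a subsequence) $\tilde\mu_\epsilon\in\overline{\operatorname{Conv}\widetilde{\mathcal{M}}(\infty)}$; since $\|u_h^\epsilon\|^2\to c_\epsilon^2\in[1-\epsilon,1]$, combining with the preceding error bound gives $\int\theta(t)\langle\mu(t,\cdot),a\rangle\,dt=\hat\theta(0)\,c_\epsilon^2\,\tilde\mu_\epsilon(a)+O_a(\sqrt\epsilon)$. Sending $\epsilon\to 0$ and using weak-$\ast$ compactness of $\overline{\operatorname{Conv}\widetilde{\mathcal{M}}(\infty)}$ extracts $\tilde\mu\in\overline{\operatorname{Conv}\widetilde{\mathcal{M}}(\infty)}$ with $\int\theta(t)\langle\mu(t,\cdot),a\rangle\,dt=\hat\theta(0)\tilde\mu(a)$ for every $\theta\in\mathcal{PW}$ and $a\in\mathcal{C}_c^\infty(T^\ast\T^d)$; extending to $\theta=\mathbf{1}_{[\alpha,\beta]}$ by $L^1$-density and running $a$ over a countable dense family then forces $\mu(t,\cdot)=\tilde\mu$ for a.e.\ $t$. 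The main obstacle is that a naive Riemann--Lebesgue argument based only on $|\hat\theta(\omega)|\to 0$ would require summing $o(1)$ off-diagonal contributions over $\sim h^{-2d}$ pairs of eigenvalues, a total that no available estimate controls; the Paley--Wiener reduction is precisely what makes the off-diagonal cancellation \emph{exact} for $h$ small and constitutes the essential technical ingredient.
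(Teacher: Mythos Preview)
Your argument follows essentially the same route as the paper: restrict to $\theta$ with compactly supported Fourier transform, use the gap hypothesis $\tau_h\gg\tau_h^H(O)$ to kill every off-diagonal spectral term exactly, recognise the surviving diagonal as a convex combination of Wigner distributions of normalised eigenfunctions, and invoke (an adaptation of) Lemma~\ref{l:cconv}.

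The one substantive difference is where the localisation to $O$ is performed. The paper restricts the \emph{symbol}: since property~(4) of the appendix gives $\mu(t,\T^d\times(\R^d\setminus O))=0$ for a.e.\ $t$, it suffices to test against $a\in\mathcal{C}_c^\infty(\T^d\times O)$, which forces the relevant frequencies $hk,hj$ into $O$ directly. You instead truncate the \emph{data} via $u_h^\epsilon=\chi_\epsilon(hD_x)u_h$ and send $\epsilon\to 0$. This variant works, but as written it carries an extra hypothesis not present in the statement: you appeal to $h$-oscillation of $(u_h)$ to obtain $\limsup_h\|u_h-u_h^\epsilon\|_{L^2}^2\leq\epsilon$. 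Without $h$-oscillation, $\bar\mu_0$ may have total mass $m<1$, and then $\|u_h-u_h^\epsilon\|^2\to\int(1-\chi_\epsilon)^2\,d\bar\mu_0+(1-m)\geq 1-m$, which does not vanish with $\epsilon$; your $O_a(\sqrt\epsilon)$ bound then fails. The paper's symbol-side localisation sidesteps this issue entirely and needs no $\epsilon$-limit. Apart from this point your proof is correct and structurally identical to the paper's.
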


\begin{proof} As in~(\ref{e:expli}),
for $a\in\mathcal{C}_{c}^{\infty}\left(  T^{\ast}\mathbb{T}
^{d}\right)  $ and $\theta\in L^{1}\left(  \mathbb{R}\right)  $, we write:
\begin{multline}
\int_{\mathbb{R}}\theta\left(  t\right)  \left\langle w_{h}\left(  t\right)
,a\right\rangle dt=\label{e:wignerbigt}\\
=\frac{1}{(2\pi)^{d/2}}\sum_{h,j\in\mathbb{Z}^{d}}\widehat{\theta}\left(
\tau_{h}\frac{H\left(  hk\right)  -H\left(  hj\right)  }{h}\right)
\widehat{u}_{h}(k)\overline{\widehat{u}_{h}(j)}\widehat{a}_{j-k}\left(
\frac{h}{2}(k+j)\right)  .\nonumber
\end{multline}
Our assumptions on the semiclassical measure of the initial data implies that,
for a.e. $t\in\mathbb{R}$:
\[
\mu\left(  t,\mathbb{T}^{d}\times\left(  \mathbb{R}^{d}\setminus O\right)
\right)  =0.
\]
Suppose that $\mu$ is obtained through the normalised sequence $\left(
u_{h}\right)  $. Suppose that $a\in\mathcal{C}_{c}^{\infty}\left(
\mathbb{T}^{d}\times O\right)  $ and that $\operatorname*{supp}\widehat
{\theta}$ is compact. For $0<h<h_{0}$ small enough,
\[
\tau_{h}\frac{H\left(  hk\right)  -H\left(  hj\right)  }{h}\notin
\operatorname*{supp}\widehat{\theta},\quad\forall hk,hj\in O\text{ such that
}H\left(  hk\right)  \neq H\left(  hj\right)  .
\]
Therefore, for such $h$, $a$ and $\theta$,
\begin{align*}
\int_{\mathbb{R}}\theta\left(  t\right)  \left\langle w_{h}\left(  t\right)
,a\right\rangle dt  &  =\frac{\widehat{\theta}\left(  0\right)  }{(2\pi
)^{d/2}}\sum_{\substack{kh,hj\in O\\H\left(  hk\right)  =H\left(  hj\right)
}}\widehat{u}_{h}(k)\overline{\widehat{u}_{h}(j)}\widehat{a}_{j-k}\left(
\frac{h}{2}(k+j)\right) \\
&  =\widehat{\theta}\left(  0\right)  \sum_{E_{h}\in H\left(  h\mathbb{Z}
^{d}\right)  \cap H\left(  O\right)  }\left\langle P_{E_{h}}u_{h}
,\Op_{h}(a)P_{E_{h}}u_{h}\right\rangle _{L^{2}(\mathbb{T}^{d})},
\end{align*}
where $P_{E_{h}}$ stands for the orthogonal projector onto the eigenspace
associated to the eigenvalue $E_{h}$. This can be rewritten as:
\[
\int_{\mathbb{R}}\theta\left(  t\right)  \left\langle w_{h}\left(  t\right)
,a\right\rangle dt=\widehat{\theta}\left(  0\right)  \sum_{E_{h}\in H\left(
h\mathbb{Z}^{d}\right)  \cap H\left(  O\right)  }c_{h}^{E_{h}}\left\langle
w_{v_{h}^{E_{h}}}^{h},a\right\rangle ,
\]
where
\[
v_{h}^{E_{h}}:=\frac{P_{E_{h}}u_{h}}{\left\Vert P_{E_{h}}u_{h}\right\Vert
_{L^{2}\left(  \mathbb{T}^{d}\right)  }},\quad\text{and\quad}c_{h}^{E_{h}
}:=\left\Vert P_{E_{h}}u_{h}\right\Vert _{L^{2}\left(  \mathbb{T}^{d}\right)
}^{2}.
\]
Note that $v_{h}^{E_{h}}$ are eigenfunctions of $H\left(  hD_{x}\right)  $ and
the fact that $\left(  u_{h}\right)  $ is normalised implies:
\[
\sum_{E_{h}\in H\left(  h\mathbb{Z}^{d}\right)  \cap H\left(  O\right)  }
c_{h}^{E_{h}}=1.
\]
We conclude by applying (a straightforward adaptation of) Lemma \ref{l:cconv}
to $v_{h}^{E_{h}}$ and $c_{h}^{E_{h}}$.
\end{proof}

\begin{corollary}
Suppose $\tau_{h}^{H}:=\tau_{h}^{H}\left(  \mathbb{R}^{d}\right)
\longrightarrow\infty$ as $h\longrightarrow0^{+}$ and that $\left(  \tau
_{h}\right)  $ is a time scale such that $\tau_{h}^{H}\ll\tau_{h}$. Then
\[
\widetilde{\mathcal{M}}\left(  \tau\right)  =\overline{\operatorname{Conv}
\widetilde{\mathcal{M}}\left(  \infty\right)  }.
\]

\end{corollary}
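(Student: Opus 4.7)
The plan is to prove the two inclusions separately. The inclusion $\widetilde{\mathcal{M}}(\tau)\subseteq\overline{\operatorname{Conv}\widetilde{\mathcal{M}}(\infty)}$ is a direct specialization of Proposition~\ref{p:muconv} to the open set $O=\R^d$: the hypothesis $\mu_0(\T^d\times(\R^d\setminus O))=0$ is then vacuous, and the condition $\tau_h^H(O)\ll\tau_h$ is precisely the assumption of the corollary.

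For the reverse inclusion $\overline{\operatorname{Conv}\widetilde{\mathcal{M}}(\infty)}\subseteq\widetilde{\mathcal{M}}(\tau)$, I would establish three facts: (a) $\widetilde{\mathcal{M}}(\infty)\subseteq\widetilde{\mathcal{M}}(\tau)$; (b) $\widetilde{\mathcal{M}}(\tau)$ is weak-$\ast$ closed; and (c) $\widetilde{\mathcal{M}}(\tau)$ is stable under finite convex combinations of elements of $\widetilde{\mathcal{M}}(\infty)$. Fact (a) is immediate: if $u_h$ is a normalised eigenfunction with eigenvalue $E_h$, then $S_h^{\tau_h t}u_h=\mathrm{e}^{-i\tau_h tE_h/h}u_h$, so $w_{S_h^{\tau_h t}u_h}^h=w_{u_h}^h$ is constant in $t$, and any accumulation point, viewed as a $t$-independent element of $L^\infty(\R;\mathcal{M}_+(T^\ast\T^d))$, lies in $\widetilde{\mathcal{M}}(\tau)$. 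Fact (b) is a standard diagonal extraction.

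The heart of the proof is (c). Given $\mu_1,\ldots,\mu_N\in\widetilde{\mathcal{M}}(\infty)$ realised by normalised eigenfunction sequences $(u_h^{(i)})$ with eigenvalues $E_h^{(i)}$, and weights $c_1,\ldots,c_N\geq 0$ summing to one, I would pass to a subsequence along which the $E_h^{(i)}$ are pairwise distinct and define $v_h:=\sum_{i=1}^N\sqrt{c_i}\,u_h^{(i)}$. Orthogonality of eigenfunctions belonging to distinct eigenspaces makes $v_h$ normalised in $L^2(\T^d)$, and the Fourier computation leading to~(\ref{e:wignerbigt}), applied to $S_h^{\tau_h t}v_h$, gives
\[
\int_\R\theta(t)\langle w_{S_h^{\tau_h t}v_h}^h,a\rangle\,dt=\sum_{i,j}\sqrt{c_ic_j}\,\widehat{\theta}\!\left(\tau_h\tfrac{E_h^{(i)}-E_h^{(j)}}{h}\right)\langle w_{u_h^{(i)},u_h^{(j)}}^h,a\rangle.
\]
Testing against $\theta\in L^1(\R)$ with $\widehat\theta$ compactly supported (a dense subclass of $L^1$), the hypothesis $\tau_h/\tau_h^H\to+\infty$ forces $|\tau_h(E_h^{(i)}-E_h^{(j)})/h|\geq\tau_h/\tau_h^H\to+\infty$ for $i\neq j$, so the off-diagonal terms vanish for $h$ small enough. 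Only the diagonal contribution survives, yielding in the limit $\widehat\theta(0)\sum_ic_i\langle\mu_i,a\rangle$, which is precisely the $t$-independent measure $\sum_ic_i\mu_i$. Combined with (a) and (b), this gives the reverse inclusion.

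The main obstacle is the preliminary extraction that makes the eigenvalues $E_h^{(i)}$ pairwise distinct: this could fail if two of the eigenfunction sequences happen to lie, for all $h$, in the same eigenspace. In that case I would replace the offending $u_h^{(i)}$ by an eigenfunction attached to a \emph{nearby} but distinct eigenvalue while keeping its Wigner limit equal to $\mu_i$; the hypothesis $\tau_h^H\to+\infty$ means the spectrum $H(h\Z^d)$ clusters on the scale $h/\tau_h^H\to 0$, so neighbouring eigenspaces carrying sequences with the same semiclassical concentration as $u_h^{(i)}$ are always available. Making this perturbation quantitative is the only subtle point; everything else follows from the Fourier computation above together with Propositions~\ref{p:convsm} and~\ref{p:muconv}.
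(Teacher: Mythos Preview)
Your approach is essentially the same as the paper's: the forward inclusion is Proposition~\ref{p:muconv} with $O=\mathbb{R}^d$, and for the converse the paper simply says it follows ``by reversing the steps of the proof of Proposition~\ref{p:muconv}'', which is exactly your construction $v_h=\sum_i\sqrt{c_i}\,u_h^{(i)}$ together with the Fourier identity you wrote down. You have actually been more careful than the paper by flagging the eigenvalue-collision issue; note, however, that your proposed fix is not quite justified as stated---the hypothesis $\tau_h^H\to\infty$ only says the \emph{minimum} spectral gap is $o(h)$, not that every eigenvalue $E_h^{(i)}$ has a close neighbour whose eigenspace carries a sequence with the same semiclassical limit $\mu_i$---but the paper glosses over this point entirely.
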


\begin{proof}
The inclusion $\widetilde{\mathcal{M}}\left(  \tau\right)  \subseteq
\overline{\operatorname{Conv}\widetilde{\mathcal{M}}\left(  \infty\right)  }$
is a consequence of the previous result with $O=\mathbb{R}^{d}$. The converse
inclusion can be proved by
reversing the steps of the proof of Proposition \ref{p:muconv}.
\end{proof}

\begin{remark}
Proposition \ref{p:conv} is a direct consequence of this result.
\end{remark}

%%%%%%%%%%%%%%%%%%%%%%%%%%%%%%%%%%%%%%%%%%%
%%%%%%%%%%%%%%%%%%%%%%%%%%%%%%%%%%%%%%%%%%%%
\section{Appendix: Basic properties of Wigner distributions and semi-classical
measures}

\label{sec:Ap1}

In this Appendix, we review basic properties of Wigner distributions and
semiclassical measures. Recall that we have defined $w_{u_{h}}^{h}$ for
$u_{h}\in L^{2}\left(  \mathbb{T}^{d}\right)  $ as:
\begin{equation}
\int_{T^{\ast}\mathbb{T}^{d}}a(x,\xi)w_{u_{h}}^{h}(dx,d\xi)=\left\langle
u_{h},\Op_{h}(a)u_{h}\right\rangle _{L^{2}(\mathbb{T}^{d})},\qquad
\mbox{ for all }a\in\mathcal{C}_{c}^{\infty}(T^{\ast}\mathbb{T}^{d}
),\label{e:defWD}
\end{equation}
Start noticing that (\ref{e:defWD}) admits the more explicit expression:
\begin{equation}
\int_{T^{\ast}\mathbb{T}^{d}}a(x,\xi)w_{u_{h}}^{h}(dx,d\xi)=\frac{1}
{(2\pi)^{d/2}}\sum_{k,j\in\Z^{d}}\widehat{u_{h}}(k)\overline{\widehat{u_{h}
}(j)}\widehat{a}_{j-k}\left(  \frac{h}{2}(k+j)\right)  ,\label{e:Weylq}
\end{equation}
where $\widehat{u}_{h}(k):=\int_{\mathbb{T}^{d}}u_{h}(x)\frac{e^{-ik.x}}
{(2\pi)^{d/2}}dx$ and $\widehat{a}_{k}(\xi):=\int_{\mathbb{T}^{d}}
a(x,\xi)\frac{e^{-ik.x}}{(2\pi)^{d/2}}dx$ denote the respective Fourier
coefficients of $u_{h}$ and $a$, with respect to the variable $x\in
\mathbb{T}^{d}$.

By the Calder\'{o}n-Vaillancourt theorem \cite{CV}, the norm of $\Op_{h}(a)$
is uniformly bounded in~$h$: indeed, there exists an integer $K_{d}$, and a
constant $C_{d}>0$ (depending on the dimension $d$) such that, if $a$ is a
smooth function on $T^{\ast}\IT^{d}$, with uniformly bounded derivatives,
then
\[
\norm{\Op_1(a)}_{L^{2}(\IT^{d})\To L^{2}(\IT^{d})}\leq C_{d}\sum_{\alpha
\in\IN^{2d},|\alpha|\leq K_{d}}\sup_{T^{\ast}\IT^{d}}|\partial^{\alpha
}a|=:C_{d}M\left(  a\right)  .
\]
A proof in the case of $L^{2}(\IR^{d})$ can be found in
\cite{DimassiSjostrand}. As a consequence of this, equation (\ref{e:defWD})
gives:%
\[
\left\vert \int_{T^{\ast}\mathbb{T}^{d}}a(x,\xi)w_{u_{h}}^{h}(dx,d\xi
)\right\vert \leq C_{d}\left\Vert u_{h}\right\Vert _{L^{2}\left(
\mathbb{T}^{d}\right)  }^{2}M\left(  a\right)  ,\qquad\mbox{ for all }a\in
\mathcal{C}_{c}^{\infty}(T^{\ast}\mathbb{T}^{d}).
\]
Therefore, if $w_{h}(t,\cdot):=w_{S_{h}^{\tau_{h}t}u_{h}}^{h}$ for some
function $h\longmapsto\tau_{h}\in\mathbb{R}_{+}$ and $\left(  u_{h}\right)  $
is bounded in $L^{2}\left(  \mathbb{T}^{d}\right)  $ one has that $\left(
w_{h}\right)  $ is uniformly bounded in $L^{\infty}\left(  \mathbb{R}
;\mathcal{D}^{\prime}\left(  T^{\ast}\mathbb{T}^{d}\right)  \right)  $. Let us consider
$\mu\in L^{\infty}\left(  \mathbb{R};\mathcal{D}^{\prime}\left(  T^{\ast
}\mathbb{T}^{d}\right)  \right)  $  an accumulation point of $\left(
w_{h}\right)  $ for the weak-$\ast$ topology.

It follows from standard results on the Weyl quantization that $\mu$ enjoys
the following properties~:

\begin{enumerate}
\item $\mu\in L^{\infty}(\R;\cM_{+}(T^{\ast}\mathbb{T}^{d}))$, meaning that
for almost all $t$, $\mu(t,\cdot)$ is a positive measure on $T^{\ast
}\mathbb{T}^{d}$.

\item The unitary character of $S_{h}^{t}$ implies that $\int_{T^{\ast
}\mathbb{T}^{d}}\mu(t,dx,d\xi)$ does not depend on $t$; from the normalization
of $u_{h}$, we have $\int_{T^{\ast}\mathbb{T}^{d}}\mu(\tau,dx,d\xi)\leq1$, the
inequality coming from the fact that $T^{\ast}\mathbb{T}^{d}$ is not compact,
and that there may be an escape of mass to infinity. Such escape does not
occur if and only if $\left(  u_{h}\right)  $ is $h$-oscillating, in which
case $\mu\in L^{\infty}\left(  \mathbb{R};\mathcal{P}\left(  T^{\ast
}\mathbb{T}^{d}\right)  \right)  $.

\item If $\tau_{h}\To\infty$ as $h\To0^{+}$ then the measures $\mu(t,\cdot)$
are invariant under $\phi_{s}$, for almost all $t$ and all $s$.

\item Let $\bar{\mu}$ be the measure on $\mathbb{R}^{d}$ image of $\mu
(t,\cdot)$ under the projection map $(x,\xi)\longmapsto\xi$. Then $\bar{\mu}$
does not depend on $t$. Moreover, if $\overline{\mu_{0}}$ stands for the image
under the same projection of any semiclassical measure corresponding to the
sequence of initial data $\left(  u_{h}\right)  $ then $\bar{\mu}
=\overline{\mu_{0}}$.
\end{enumerate}

For the reader's convenience, we next prove statements (3) and (4)  (see also
\cite{MaciaAv} for a proof of these results in the context of the
Schr\"{o}dinger flow $e^{iht\Delta}$ on a general Riemannian manifold). Let us
begin with the invariance through the Hamiltonian flow. We set
\[
a_{s}(x,\xi):=a(x+sdH(\xi),\xi)=a\circ\phi_{s}(x,\xi).
\]
The symbolic calculus for Wey's quantization implies:
\begin{align*}
{\frac{d}{ds}}S_{h}^{s}\Op_{h}(a_{s})S_{h}^{-s} &  =S_{h}^{s}\Op_{h}
(\partial_{s}a_{s})S_{h}^{-s}-{\frac{i}{h}}S_{h}^{s}\left[  H(hD)\;,\;\Op_{h}
(a_{s})\right]  S_{h}^{-s}\\
&  =O(h^{2}).
\end{align*}
Therefore, $S_{h}^{s}\Op_{h}(a_{s})S_{h}^{-s}=\Op_{h}(a)+O(h^{2})$ and for
$\theta\in L^{1}(\R)$,
\begin{align*}
\int_{\mathbb{R}}\theta(t)\left\langle w_{h}\left(  t\right)  ,a\right\rangle
dt &  =\int_{\mathbb{R}}\theta(t)\la u_{h}\;,\;S_{h}^{-\tau_{h}t}
\Op_{h}(a)S_{h}^{\tau_{h}t}u_{h}\ra dt\\
&  =\int_{\mathbb{R}}\theta(t)\la u_{h}\;,\;S_{h}^{-\tau_{h}(t-s/\tau_{h}
)}\Op_{h}(a\circ\phi_{s})S_{h}^{\tau_{h}(t-s/\tau_{h})}u_{h}\ra dt+O(h^{2})\\
&  =\int_{\mathbb{R}}\theta(t+s/\tau_{h})\la u_{h}\;,\;S^{-\tau_{h}t}
\Op_{h}(a\circ\phi_{s})S^{\tau_{h}t}u_{h}\ra dt+O(h^{2})\\
&  =\int_{\mathbb{R}}\theta(t+s/\tau_{h})\la w_{h}\left(  t\right)
,a\circ\phi_{s}\ra dt+O(h^{2}).
\end{align*}
Since $\Vert\theta(\cdot+s/\tau_{h})-\theta\Vert_{L^{1}}\To0$ (recall that we
have assumed that  $\tau_{h}\To\infty$ as $h\To0^{+}$) we obtain
\[
\int_{\mathbb{R}}\theta(t)\left\langle w_{h}\left(  t\right)  ,a\right\rangle
dt-\int_{\mathbb{R}}\theta(t)\la w_{h}\left(  t\right)  ,a\circ\phi_{s}\ra
dt\To0,\text{ as }h\To0^{+},
\]
whence the invariance under $\phi_{s}$.

Let us now prove property (4). Consider $\overline{\mu}$ the image of $\mu$ by
the projection $(x,\xi)\longmapsto\xi$, we have for $a\in{\mathcal{C}}
_{0}^{\infty}(\R^{d})$ :
\begin{align*}
\left\langle w_{h}\left(  t\right)  ,a\left(  \xi\right)  \right\rangle
-\left\langle w_{u_{h}}^{h},a\left(  \xi\right)  \right\rangle  &  =\int
_{0}^{t}\frac{d}{ds}\langle w_{h}(s)\;,\;a(\xi)\rangle ds\\
&  =\int_{0}^{t}\la u_{h}\;,\;\frac{d}{ds}\left(  S_{h}^{-\tau_{h}s}
\Op_{h}(a)S_{h}^{\tau_{h}s}\right)  u_{h}\ra ds\\
&  =0,
\end{align*}
as ${\frac{d}{ds}}S_{h}^{-s}\Op_{h}(a(\xi))S_{h}^{s}=0$ (for $a$ only
depending on $\xi$ we have $\Op_{h}(a)=a(hD_{x})$, which commutes with
$H(hD_{x})$). Therefore, taking limits we find, for every $\theta\in
L^{1}\left(  \mathbb{R}\right)  $:
\[
\int_{\mathbb{R}}\theta\left(  t\right)  \int_{T^{\ast}\mathbb{T}^{d}}a\left(
\xi\right)  \mu\left(  t,dx,d\xi\right)  =\left(  \int_{\mathbb{R}}
\theta\left(  t\right)  dt\right)  \int_{T^{\ast}\mathbb{T}^{d}}a\left(
\xi\right)  \mu_{0}\left(  dx,d\xi\right)  ,
\]
where $\mu_{0}$ is any accumulation point of $\left(  w_{u_{h}}^{h}\right)  $.
As a consequence of this, we find that $\overline{\mu}$ does not depend on $t$
and:
\[
\overline{\mu}\left(  \xi\right)  =\int_{\mathbb{T}^{d}}\mu_{0}\left(
dy,\xi\right)  .
\]

%%%%%%%%%%%%%%%%%%%%%%%%%%%%%%%%%%%%%%%%%%%%%%%%%%%%%%%%

\def\cprime{$'$} \def\cprime{$'$}


\begin{thebibliography}{10}

\bibitem{Aissiou}
Tayeb A{\"i}ssiou.
\newblock Semiclassical limits of eigenfunctions on flat $n$-dimensional tori.
\newblock {\em Canad. Math. Bull.}, 2011.
\newblock To appear.

\bibitem{AJM11}
Tayeb A{\"i}ssiou, Dmitry Jakobson, and Fabricio Maci{\`a}.
\newblock Uniform estimates for the solutions of the {S}chr{\"o}dinger equation
  on the torus and regularity of semiclassical measures.
\newblock {\em Math. Res. Lett.}, 2011.
\newblock To appear.

\bibitem{AnantharamanMacia}
Nalini Anantharaman and Fabricio Maci\`{a}.
\newblock Semiclassical measures for the {S}chr\"odinger equation on the torus.
\newblock 2010.
\newblock Preprint arXiv:1005.0296v2.

\bibitem{AnantharamanMaciaSurv}
Nalini Anantharaman and Fabricio Maci\`{a}.
\newblock The dynamics of the {S}chr\"odinger flow from the point of view of
  semiclassical measures.
\newblock In {\em Spectral Geometry}, Proc. Sympos. Pure Math. Amer. Math.
  Soc., Providence, RI, 2012.
\newblock to appear.

\bibitem{AnRiv}
Nalini Anantharaman and Gabriel Rivi\`ere.
\newblock Dispersion and controllability for the {S}chr\"odinger equation on
  negatively curved manifolds.
\newblock 2010.
\newblock Preprint arXiv:1009.4927.

\bibitem{BourgainQL97}
Jean Bourgain.
\newblock Analysis results and problems related to lattice points on surfaces.
\newblock In {\em Harmonic analysis and nonlinear differential equations
  ({R}iverside, {CA}, 1995)}, volume 208 of {\em Contemp. Math.}, pages
  85--109. Amer. Math. Soc., Providence, RI, 1997.

\bibitem{Burq12}
Nicolas Burq.
\newblock Semi-classical measures for inhomogeneous Schr\"odinger equations on
tori.
\newblock 2012.
\newblock Preprint arXiv:1209.3739.

\bibitem{CV}
Alberto-P. Calder{\'o}n and R{\'e}mi Vaillancourt.
\newblock On the boundedness of pseudo-differential operators.
\newblock {\em J. Math. Soc. Japan}, 23:374--378, 1971.

\bibitem{MargulisDani}
Shrikrishna~G. Dani and Gregori~A. Margulis.
\newblock Values of quadratic forms at primitive integral points.
\newblock {\em Invent. Math.}, 98(2):405--424, 1989.

\bibitem{DimassiSjostrand}
Mouez Dimassi and Johannes Sj{\"o}strand.
\newblock {\em Spectral asymptotics in the semi-classical limit}, volume 268 of
  {\em London Mathematical Society Lecture Note Series}.
\newblock Cambridge University Press, Cambridge, 1999.

\bibitem{EvansZworski}
Lawrence~C. Evans and Maciej Zworski.
\newblock {\em Lectures on semiclassical analysis}.
\newblock 2010.
\newblock Avalaible at:
  http://www.math.berkeley.edu/~zworski/semiclassical.pdf.

\bibitem{Fermanian2micro}
Clotilde Fermanian-Kammerer.
\newblock Mesures semi-classiques 2-microlocales.
\newblock {\em C. R. Acad. Sci. Paris S\'er. I Math.}, 331(7):515--518, 2000.

\bibitem{FermanianShocks}
Clotilde Fermanian~Kammerer.
\newblock Propagation and absorption of concentration effects near shock
  hypersurfaces for the heat equation.
\newblock {\em Asymptot. Anal.}, 24(2):107--141, 2000.

\bibitem{FermanianGerardCroisements}
Clotilde Fermanian-Kammerer and Patrick G{\'e}rard.
\newblock Mesures semi-classiques et croisement de modes.
\newblock {\em Bull. Soc. Math. France}, 130(1):123--168, 2002.

\bibitem{GerardMesuresSemi91}
Patrick G{\'e}rard.
\newblock Mesures semi-classiques et ondes de {B}loch.
\newblock In {\em S\'eminaire sur les \'{E}quations aux {D}\'eriv\'ees
  {P}artielles, 1990--1991}, pages Exp.\ No.\ XVI, 19. \'Ecole Polytech.,
  Palaiseau, 1991.

\bibitem{GerardMDM91}
Patrick G{\'e}rard.
\newblock Microlocal defect measures.
\newblock {\em Comm. Partial Differential Equations}, 16(11):1761--1794, 1991.

\bibitem{GerLeich93}
Patrick G{\'e}rard and {\'E}ric Leichtnam.
\newblock Ergodic properties of eigenfunctions for the {D}irichlet problem.
\newblock {\em Duke Math. J.}, 71(2):559--607, 1993.

\bibitem{JakobsonTori97}
Dmitry Jakobson.
\newblock Quantum limits on flat tori.
\newblock {\em Ann. of Math. (2)}, 145(2):235--266, 1997.

\bibitem{LindenQUE}
Elon Lindenstrauss.
\newblock Invariant measures and arithmetic quantum unique ergodicity.
\newblock {\em Ann. of Math. (2)}, 163(1):165--219, 2006.

\bibitem{MaciaAv}
Fabricio Maci{\`a}.
\newblock Semiclassical measures and the {S}chr\"odinger flow on {R}iemannian
  manifolds.
\newblock {\em Nonlinearity}, 22(5):1003--1020, 2009.

\bibitem{MaciaTorus}
Fabricio Maci\`{a}.
\newblock High-frequency propagation for the {S}chr\"odinger equation on the
  torus.
\newblock {\em J. Funct. Anal.}, 258(3):933--955, 2010.

\bibitem{MaciaDispersion}
Fabricio Maci{\`a}.
\newblock The {S}chr\"odinger flow in a compact manifold: high-frequency
  dynamics and dispersion.
\newblock In {\em Modern aspects of the theory of partial differential
  equations}, volume 216 of {\em Oper. Theory Adv. Appl.}, pages 275--289.
  Birkh\"auser/Springer Basel AG, Basel, 2011.

\bibitem{MargulisOpp}
Gregori~A. Margulis.
\newblock Discrete subgroups and ergodic theory.
\newblock In {\em Number theory, trace formulas and discrete groups ({O}slo,
  1987)}, pages 377--398. Academic Press, Boston, MA, 1989.

\bibitem{MarklofPoisson2}
Jens Marklof.
\newblock Pair correlation densities of inhomogeneous quadratic forms. {II}.
\newblock {\em Duke Math. J.}, 115(3):409--434, 2002.

\bibitem{MarklofPoisson}
Jens Marklof.
\newblock Pair correlation densities of inhomogeneous quadratic forms.
\newblock {\em Ann. of Math. (2)}, 158(2):419--471, 2003.

\bibitem{MarklofSquares}
Jens Marklof.
\newblock Mean square value of exponential sums related to the representation
  of integers as sums of squares.
\newblock {\em Acta Arith.}, 117(4):353--370, 2005.

\bibitem{MillerThesis}
Luc Miller.
\newblock {\em Propagation d'ondes semi-classiques \`{a} travers une interface
  et mesures 2-microlocales}.
\newblock PhD thesis, \'Ecole Polythecnique, Palaiseau, 1996.

\bibitem{JakNadToth01}
Nikolai Nadirashvili, John Toth, and Dmitry Jakobson.
\newblock Geometric properties of eigenfunctions.
\newblock {\em Russian Math. Surveys}, 56(6):10851105, 2001.

\bibitem{NierScat}
Francis Nier.
\newblock A semi-classical picture of quantum scattering.
\newblock {\em Ann. Sci. \'Ecole Norm. Sup. (4)}, 29(2):149--183, 1996.

\bibitem{Paul11}
Thierry Paul.
\newblock Semiclassical approximation and noncommutative geometry.
\newblock 2011.
\newblock Preprint http://www.math.polytechnique.fr/~paul/.

\bibitem{Schub-largetimes}
Roman Schubert.
\newblock Semiclassical wave propagation for large times.
\newblock 2007.
\newblock Unpublished http://www.maths.bris.ac.uk/~marcvs/publications.html.

\bibitem{TartarH}
Luc Tartar.
\newblock {$H$}-measures, a new approach for studying homogenisation,
  oscillations and concentration effects in partial differential equations.
\newblock {\em Proc. Roy. Soc. Edinburgh Sect. A}, 115(3-4):193--230, 1990.

\bibitem{VasyWunsch09}
Andr{\'a}s Vasy and Jared Wunsch.
\newblock Semiclassical second microlocal propagation of regularity and
  integrable systems.
\newblock {\em J. Anal. Math.}, 108:119--157, 2009.

\bibitem{VasyWunsch11}
Andr{\'a}s Vasy and Jared Wunsch.
\newblock Erratum to: ``{S}emiclassical second microlocal propagation of
  regularity and integrable systems'' [mr2544756].
\newblock {\em J. Anal. Math.}, 115:389--391, 2011.

\bibitem{Wunsch10}
Jared Wunsch.
\newblock Non-concentration of quasimodes for integrable systems.
\newblock 2010.
\newblock Preprint, arXiv:1008.4396v2.

\end{thebibliography}
\end{document}